\renewcommand{\geq}{\geqslant}
\renewcommand{\leq}{\leqslant}
\renewcommand{\epsilon}{\varepsilon}
\newcommand{\sign}{\text{sign}}
\definecolor{darkgreen}{rgb}{0,0.4,0}
\definecolor{MyDarkBlue}{rgb}{0,0.08,0.85}
\definecolor{BrickRed}{rgb}{0.8,0.08,0}
\newtheorem{theorem}{Theorem}
\newtheorem{lemma}[theorem]{Lemma}
\newtheorem{remark}[theorem]{Remark}
\newtheorem{proposition}[theorem]{Proposition}
\numberwithin{equation}{section}
\title[Discrete harmonic functions for non-symmetric Laplace operators]{Discrete harmonic functions for non-symmetric Laplace operators in the quarter plane}
\author{Viet Hung Hoang}
\address{CNRS \and Institut Denis Poisson, UMR CNRS 7013, Universit\'e de Tours et Universit\'e d'Orl\'eans, Parc de Grandmont, 37200 Tours, France, Institut f\"ur Mathematische Stochastik, Universit\"at M\"unster, Orl\'eans-Ring 10, 48149 M\"unster, Germany}
\email{viet.hung-hoang@lmpt.univ-tours.fr}
\thanks{This project has received funding from the European Research Council (ERC) under the European Union's Horizon 2020 research and innovation programme under the Grant Agreement No.\ 759702.}
\date{\today}
\subjclass[2010]{Primary 31C35, 60G50; Secondary 60J45, 60J50, 31C20}
\keywords{Discrete harmonic functions, conformal mappings, conformal welding}
\begin{document}

\maketitle
	
\begin{abstract}
	We construct harmonic functions in the quarter plane for discrete Laplace operators. In particular, the functions are conditioned to vanish on the boundary and the Laplacians admit coefficients associated with transition probabilities of non-symmetric random walks. By solving a boundary value problem for generating functions of harmonic functions, we deduce explicit expressions for the generating functions in terms of conformal mappings. These mappings are yielded from a conformal welding problem with quasisymmetric shift and contain information about the growth of harmonic functions. Further, we describe the set of harmonic functions as a vector space isomorphic to the space of formal power series.
\end{abstract}
	
\setcounter{tocdepth}{1}
\tableofcontents
\section{Introduction}\label{sec: intro}

\subsection*{Discrete harmonic functions}

Let us first recall some definitions. A real-valued function $h$ defined on a domain $D\subset\mathbb{Z}^2$ is called discrete harmonic (or preharmonic) with respect to the discrete Laplace operator
\begin{equation}\label{eq: discrete Laplacian}
	\Delta(h)(i,j) = \sum_{k,\ell} p_{k,\ell} h(i+k,j+\ell)- h(i,j),
\end{equation}
with $\{p_{k,\ell}\}$ fixed, if $\Delta(h)(i,j)=0$ for all $(i,j)\in D$. For the sake of brevity, the words ``harmonic function" and ``Laplace operator" will always be understood in the discrete context. The word ``discrete" will only be used in case where we want to emphasize it in relation to the continuous counterpart.

During the second half of the 20\textsuperscript{th} century, queuing theory was of interest to many probabilists. Discrete harmonicity appeared vastly under the form of balance equations to model the stationary distribution of many quantities (see the book \cite{Kl-75} of Kleinrock for an introduction of various models or the book \cite{Co-82} of Cohen for single server queues). Another important application is the connection to the concept of Doob's transform. Biane in \cite{Bi-91}, Eichelsbacher and K\"onig in \cite{EiKo-08} constructed processes conditioned to stay in cones from positive harmonic functions. Later, Denisov and Wachtel in \cite{DeWa-15} used the functions to find the tail asymptotics for the exit time of such conditioned walks. Harmonic functions are also found in some problems of population dynamics to model the extinction probabilities of the population (see \cite{LaRaTr-13,AlRa-19}).

The references listed above mostly concern positive harmonic functions on account of their evident links to probabilistic quantities. However, some authors continued to extend the problem further to signed harmonic functions (namely, the sign may vary in some domains). Almansi in \cite{Al-1899} proved that in the continuous context, polyharmonic functions can be decomposed into signed harmonic functions. The discrete analog of this result was achieved in \cite{CoCoGoSi-02,SaWo-21,ChFuRa-20}. Further in \cite{ChFuRa-20}, Chapon, Fusy and Raschel unveiled some connections between discrete polyharmonic functions and complete asymptotic expansions in walk enumeration problems. Therefore, studies of polyharmonic and/or signed harmonic functions become more relevant. Last but not least, one of our goals is to study the space of harmonic functions (see \cite{HoRaTa-20}), which gives us a general viewpoint on this discrete potential theory problem.

\subsection*{Overview of our main model and some studies of large jump random walks}
Hoang, Raschel and Tarrago in \cite{HoRaTa-20} studied discrete harmonic functions for symmetric Laplace operators in the quarter plane. More precisely, they considered the Laplacian \eqref{eq: discrete Laplacian} with $p_{k,\ell}=p_{\ell,k}$ for all $k,\ell\in\mathbb{Z}$. The present work aims at extending their results to a non-symmetric context. We therefore reintroduce our main problem and point out key differences between the models of this work and of \cite{HoRaTa-20}.

In this article, we consider the problem of finding discrete harmonic functions in the quarter plane (i.e., $D=\mathbb{N}^2=\{1,2,3,...\}^2$) which vanish on the boundary axes. The weights $\{p_{k,\ell}\}$ of the Laplacian \eqref{eq: discrete Laplacian} will be modeled by transition probabilities of random walks. This is motivated by the links between harmonic functions and various problems of queuing systems, population dynamics,... which can be reformulated as problems of random walks. In particular, we consider walks with arbitrary large negative jumps, small positive jumps, and moreover not being constrained by any symmetry conditions (see Table~\ref{table: walks}). This is the main difference of this paper's model in comparison with \cite{HoRaTa-20}. Due to the high order of the characteristic polynomial associated with large jumps, such walks are undoubtedly challenging to study. We refer the readers to some achievements in the subject: an analytic technique was developed in \cite{CoBo-83,Co-92} by Cohen and Boxma to study walks with two-dimensional state space; asymptotics of multidimensional walks are investigated in \cite{DeWa-15} by Denisov and Wachtel, \cite{DuWa-15} by Duraj and Wachtel; and recently in combinatorics, walks with steps in $\{-2,-1,0,1\}^2$ are systematically classified in \cite{BoBoMe-21} by Bostan, Bousquet-M\'elou and Melczer, alongside with some related conjectures. In brief, the study of large step walks is still largely open for further development.
\begin{table}[t]
	\begin{center}
		\begin{tabular}{| c | c |}
			\hline
			Symmetric small jump walks & Non-symmetric small jump walks \\
			\hline
			\begin{tikzpicture}[scale=.7] 
			\draw[->,white] (1.5,1.5) -- (-1.5,-1.5);
			\draw[->,white] (1.5,-1.5) -- (-1.5,1);
			\draw[->] (0,0) -- (-1,1);
			\draw[->] (0,0) -- (1,-1);
			\draw[->] (0,0) -- (1,0);
			\draw[->] (0,0) -- (-1,0);
			\draw[->] (0,0) -- (0,-1);
			\draw[->] (0,0) -- (0,1);
			\end{tikzpicture}
			\begin{tikzpicture}[scale=.7] 
			\draw[->,white] (1.5,1.5) -- (-1.5,-1.5);
			\draw[->,white] (1.5,-1.5) -- (-1.5,1);
			\draw[->] (0,0) -- (-1,0);
			\draw[->] (0,0) -- (1,0);
			\draw[->] (0,0) -- (0,1);
			\draw[->] (0,0) -- (0,-1);
			\end{tikzpicture}
			\begin{tikzpicture}[scale=.7] 
			\draw[->,white] (1.5,1.5) -- (-1.5,-1.5);
			\draw[->,white] (1.5,-1.5) -- (-1.5,1);
			\draw[->] (0,0) -- (1,1);
			\draw[->] (0,0) -- (-1,0);
			\draw[->] (0,0) -- (0,-1);
			\end{tikzpicture}
			&
			\begin{tikzpicture}[scale=.7] 
			\draw[->,white] (1.5,1.5) -- (-1.5,-1.5);
			\draw[->,white] (1.5,-1.5) -- (-1.5,1);
			\draw[->] (0,0) -- (-1,1);
			\draw[->] (0,0) -- (1,0);
			\draw[->] (0,0) -- (0,-1);
			\end{tikzpicture}
			\begin{tikzpicture}[scale=.7] 
			\draw[->,white] (1.5,1.5) -- (-1.5,-1.5);
			\draw[->,white] (1.5,-1.5) -- (-1.5,1);
			\draw[->] (0,0) -- (-1,0);
			\draw[->] (0,0) -- (1,0);
			\draw[->] (0,0) -- (-1,1);
			\draw[->] (0,0) -- (1,-1);
			\end{tikzpicture}
			\begin{tikzpicture}[scale=.7] 
			\draw[->,white] (1.5,1.5) -- (-1.5,-1.5);
			\draw[->,white] (1.5,-1.5) -- (-1.5,1);
			\draw[->] (0,0) -- (1,1);
			\draw[->] (0,0) -- (-1,1);
			\draw[->] (0,0) -- (0,-1);
			\end{tikzpicture}  \\
			\hline
			Symmetric large negative jump walks & Non-symmetric large negative jump walks\\
			\hline
			\begin{tikzpicture}[scale=.7] 
			\draw[->,white] (1.5,1.5) -- (-2,-2.5);
			\draw[->,white] (1.5,-2.5) -- (-2,1.5);
			\draw[->] (0,0) -- (1,1);
			\draw[->] (0,0) -- (0,1);
			\draw[->] (0,0) -- (1,0);
			\draw[->] (0,0) -- (-2,-2);
			\draw[->] (0,0) -- (-2,0);
			\draw[->] (0,0) -- (0,-2);
			\draw[->] (0,0) -- (1,-2);
			\draw[->] (0,0) -- (-2,1);
			\end{tikzpicture}
			&
			\begin{tikzpicture}[scale=.7] 
			\draw[->,white] (1.5,1.5) -- (-2,-2.5);
			\draw[->,white] (1.5,-2.5) -- (-2,1.5);
			\draw[->] (0,0) -- (1,1);
			\draw[->] (0,0) -- (0,1);
			\draw[->] (0,0) -- (1,0);
			\draw[->] (0,0) -- (-1,-2);
			\draw[->] (0,0) -- (-2,-2);
			\draw[->] (0,0) -- (-2,0);
			\draw[->] (0,0) -- (1,-2);
			\end{tikzpicture}\\
			\hline
			Symmetric large positive jump walks & Non-symmetric large positive jump walks\\
			\hline
			\begin{tikzpicture}[scale=.7] 
			\draw[->,white] (-1.5,-1.5) -- (2.5,2.5);
			\draw[->,white] (-1.5,-1.5) -- (2.5,2.5);
			\draw[->] (0,0) -- (-1,-1);
			\draw[->] (0,0) -- (0,-1);
			\draw[->] (0,0) -- (-1,0);
			\draw[->] (0,0) -- (2,2);
			\draw[->] (0,0) -- (2,0);
			\draw[->] (0,0) -- (0,2);
			\draw[->] (0,0) -- (-1,2);
			\draw[->] (0,0) -- (2,-1);
			\end{tikzpicture}
			&
			\begin{tikzpicture}[scale=.7] 
			\draw[->,white] (-1.5,-1.5) -- (2.5,2.5);
			\draw[->,white] (-1.5,-1.5) -- (2.5,2.5);
			\draw[->] (0,0) -- (-1,-1);
			\draw[->] (0,0) -- (0,-1);
			\draw[->] (0,0) -- (-1,0);
			\draw[->] (0,0) -- (1,2);
			\draw[->] (0,0) -- (2,2);
			\draw[->] (0,0) -- (2,0);
			\draw[->] (0,0) -- (-1,2);
			\end{tikzpicture}\\
			\hline
			Symmetric transition probabilities & Non-symmetric transition probabilities\\
			\hline
			\begin{tikzpicture}[scale=.7] 
			\draw[->,white] (2,2) -- (-2,-2);
			\draw[->,white] (2,-2) -- (-2,2);
			\draw[->] (0,0) -- (-1,0) node[left] {$1/4$};
			\draw[->] (0,0) -- (1,0) node[right] {$1/4$};
			\draw[->] (0,0) -- (0,1) node[above] {$1/4$};
			\draw[->] (0,0) -- (0,-1) node[below] {$1/4$};
			\end{tikzpicture}
			\begin{tikzpicture}[scale=.7] 
			\draw[->,white] (1,2) -- (-1,-2);
			\draw[->,white] (1,-2) -- (-1,1);
			\draw[->] (0,0) -- (-1,1) node[left] {$1/6$};
			\draw[->] (0,0) -- (1,-1) node[right] {$1/6$};
			\draw[->] (0,0) -- (1,0) node[right] {$1/6$};
			\draw[->] (0,0) -- (-1,0) node[left] {$1/6$};
			\draw[->] (0,0) -- (0,-1) node[below] {$1/6$};
			\draw[->] (0,0) -- (0,1) node[above] {$1/6$};
			\end{tikzpicture}
			&
			\begin{tikzpicture}[scale=.7] 
			\draw[->,white] (2,2) -- (-2,-2);
			\draw[->,white] (2,-2) -- (-2,2);
			\draw[->] (0,0) -- (-1,0) node[left] {$1/6$};
			\draw[->] (0,0) -- (1,0) node[right] {$1/6$};
			\draw[->] (0,0) -- (0,1) node[above] {$1/3$};
			\draw[->] (0,0) -- (0,-1) node[below] {$1/3$};
			\end{tikzpicture}
			\begin{tikzpicture}[scale=.7] 
			\draw[->,white] (1,2) -- (-1,-2);
			\draw[->,white] (1,-2) -- (-1,1);
			\draw[->] (0,0) -- (-1,1) node[left] {$1/8$};
			\draw[->] (0,0) -- (1,-1) node[right] {$1/8$};
			\draw[->] (0,0) -- (1,0) node[right] {$1/6$};
			\draw[->] (0,0) -- (-1,0) node[left] {$1/6$};
			\draw[->] (0,0) -- (0,-1) node[below] {$1/8$};
			\draw[->] (0,0) -- (0,1) node[above] {$1/8$};
			\end{tikzpicture}\\
			\hline
		\end{tabular}
	\end{center}
	\caption{Various examples of symmetric and non-symmetric walks}
	\label{table: walks}
\end{table}

\subsection*{Analytic approach to find harmonic functions}
We present here a complex analytic approach which will be the backbone of our work. The technique was pioneered by Malyshev in \cite{Ma-72}, Fayolle and Iasnogorodski in \cite{FaIa-79}, to study the stationary distribution of small jump random walks in a two-dimensional state space. The global idea is as follows. The balance equations for the stationary probabilities lead to a functional equation for their generating functions. An important bivariate polynomial characterized by the transition probabilities of the walk appears in the equation and is usually called the kernel function. By studying the Riemann surface associated with this kernel, then constructing a uniformizing variable, one can solve the equation in certain regions, then continue meromorphically the solutions into a bigger region. In another direction proposed in \cite{FaIa-79} by Fayolle and Iasnogorodski, the algebraic curve associated with the kernel offers a large choice of branches, and with an appropriate subset, the functional equation can be reduced to boundary value problems (BVP). Such a technique is also discussed in deeper insights by the same authors in the book \cite{FaIaMa-17}.

Since then, the technique has been successfully applied to various areas, such as queuing systems \cite{FlHa-84,Fl-85,KuSu-03}, potential theory \cite{Ra-11,Ra-14,LeRa-16} and enumerative combinatorics (counting walks in the quarter plane) \cite{BMMi-10,KuRa-12,DrHaRoSi-18}, leading to highly precise results (both exact and asymptotic results).

We also mention here the work \cite{BaFl-02} by Banderier and Flajolet. The authors considered combinatorial aspects of one-dimensional large jump walks as directed (large jump) two-dimensional walks. By the analytic approach, the counting generating functions are shown to have algebraic forms constructed by the branches of the associated algebraic curve.

And lastly, an alternative approach introduced by Cohen and Boxma in \cite{CoBo-83} contributes largely to the development of this technique (see also \cite{Co-92} by Cohen). The overall idea is the same as Fayolle and Iasnogorodski's. However, the subset of the kernel's zero set is chosen differently, in a way that this choice does not depend on the branches of the algebraic curve. Their construction overcomes the obstacle of studying precisely the Riemann surface, therefore it can cover a wide class of large jump random walks. This approach will also be the central point of our present analysis.

\subsection*{Non-symmetric walks with large negative jumps and harmonic functions}

Consider a lattice walk on $\mathbb{Z}^2$ with transition probabilities (or non-negative weights) $\{p_{k,\ell}\}_{(k,\ell)\in\mathbb Z^2}$ summing to $1$ such that:
\begin{enumerate}[label=(H\arabic{*}),ref={\rm (H\arabic{*})}]
	\item\label{assumption: H1} The walk has small positive jumps (at most $1$ unit) and arbitrary large  negative jumps, i.e., $p_{k,\ell} = 0$ if $k\geq 2$ or $\ell \geq 2$ (see Table~\ref{table: walks});
	
	\item\label{assumption: H2} If $\big\vert \sum p_{k,\ell}x^ky^\ell \big\vert=1$ and $\vert x\vert=\vert y\vert=1$, then $x=y=1$ (which is a necessary condition for the irreducibility of the walk);
	
	\item\label{assumption: H3.2} $\{p_{k,\ell}\}_{(k,\ell)\in\mathbb Z^2}$ have finite moments of order $2$, i.e., $\sum(k^2+\ell^2)p_{k,\ell} <\infty$;
	
	\item\label{assumption: H3} The drift is zero, i.e., $\sum k p_{k,\ell}=\sum \ell p_{k,\ell}=0$.
\end{enumerate}

Now let $\Delta$ be the discrete Laplacian \eqref{eq: discrete Laplacian} associated with $\{p_{k,\ell}\}_{(k,\ell)\in\mathbb Z^2}$ and acting on functions $h:\mathbb{Z}^2\to \mathbb{R}$. Our goal is to describe real-valued functions $\{h(i,j)\}_{i,j\in\mathbb{Z}^2}$ satisfying the following conditions:
\begin{enumerate}[label=(H\arabic{*}),ref={\rm (H\arabic{*})}]\setcounter{enumi}{4}
	\item\label{assumption: H4} $h(i,j)$ is harmonic with respect to the operator $\Delta$ on the positive quadrant, i.e., for all $i,j\geq 1$, $\Delta(h)(i,j)=0$;
	
	\item\label{assumption: H5} $h(i,j)$ vanishes elsewhere, i.e., for all $(i,j)\in\mathbb Z^2$ with $i\leq0$ and/or $j\leq0$, $h(i,j)=0$.
\end{enumerate}

Random walks with such non-symmetric step sets have appeared throughout the literature. In \cite{BMMi-10,BoRaSa-14}, the authors study the problem of counting walks for numerous small jump random walks. In \cite{Bi-91,EiKo-08,KoSc-10,Ra-11}, walks staying in Weyl chambers are investigated. For example, walks in the duals of SU(3) and Sp(4) can be viewed as non-symmetric walks in the quadrant.

\subsection*{Continuous counterpart of the main problem}
For future use, we recall some facts about continuous harmonic functions in cones. A real-valued function $f$ defined on a domain $D\subset\mathbb{R}^2$ is called harmonic with respect to the (continuous) Laplace operator
\begin{equation}\label{eq: continuous laplacian}
	\nabla^2 := \partial_{xx} + \partial_{yy},
\end{equation}
if $\nabla^2 f = 0$ on $D$.

Consider the following Dirichlet problem on the cone $\mathcal{E}:=\{(r\cos\phi,r\sin\phi):r> 0,0< \phi< \theta\}$ of angle $\theta\in(0,\pi]$: find functions $f:\mathcal{E}\to \mathbb{R}$ such that
\begin{enumerate}[label=\textnormal{(\roman{*})},ref=\textnormal{(\roman{*})}]
	\item $f$ is harmonic with respect to $\nabla^2$ on $\mathcal{E}$;
	
	\item $f$ is continuous on the closure $\overline{\mathcal{E}}$ of $\mathcal{E}$ and vanishes on the boundary $\partial\mathcal{E}$.
\end{enumerate}

If the cone is the upper half-plane $\mathbb{R}\times\mathbb{R}^+$, then the problem can easily be solved by complex analysis: by Schwarz reflection principle, the function $f$ can be extended to a harmonic function on $\mathbb{R}^2$ by the formula $f(x,y) = - f(x,-y)$ for all $(x,y)\in\mathbb{R}\times\mathbb{R}_-$. Moreover, a function $f:\mathbb{R}^2\to\mathbb{R}$ is harmonic on $\mathbb{R}^2$ if and only if there exists uniquely a complex analytic function $g$ (up to an additive constant) on $\mathbb{C}$ such that $f(x,y) = \Im g(x+yi)$ (with $\Im$ denoting the imaginary part) for any $(x,y)\in\mathbb{R}^2$. Therefore, $g(\overline{z}) = \overline{g(z)}$ for $z\in\mathbb{C}$ by Schwarz reflection principle. In other words, $g(z)$ admits the form $g(z)=\sum_{n\geq 0}a_nz^n$ with $(a_n)_n\subset\mathbb{R}$ and $\vert a_n\vert^{1/n}\to 0$ (which is a necessary and sufficient condition for $g(z)$ to be analytic on $\mathbb{C}$). One can describe the set of its solutions as the vector space
\begin{align*}
	H(\mathbb{R}\times\mathbb{R}^+)= \{ \sum_{n\geq 1} a_n \Im\big((x+iy)^{n}\big): (a_n)_n\subset\mathbb{R}\text{ and }\vert a_n\vert^{1/n}\to 0\}.
\end{align*}
By the mapping $z\mapsto z^{\theta/\pi}$ which maps conformally the upper half-plane to a cone of angle $\theta$ (in the complex plane), we then obtain a similar description for the solution set of the problem on the cone $\mathcal{E}$ of angle $\theta$ as
\begin{align}
	H(\mathcal{E})= \{ \sum_{n\geq 1} a_n \Im\big((x+iy)^{n\pi/\theta}\big): (a_n)_n\subset\mathbb{R}\text{ and }\vert a_n\vert^{1/n}\to 0\}.
\end{align}

Through the linear transform $(x,y)\mapsto (x\sin\theta-y\cos\theta,y)$ which maps the cone of angle $\theta$ to the first quadrant $\mathcal{Q}:=\mathbb{R}_+^2$, the problem on the cone can be reformulated into the one on the first quadrant: find functions $f:\mathcal{Q}\to \mathbb{R}$ such that
\begin{enumerate}[label=\textnormal{(\roman{*})},ref=\textnormal{(\roman{*})}]
	\item $f$ is analytic and satisfies the equation
	\begin{equation}
		\partial_{xx}f - 2\cos(\theta)\partial_{xy}f + \partial_{yy}f =0
	\end{equation}
	on the first quadrant $\mathcal{Q}$;
	\item $f$ is continuous on $\overline{\mathcal{Q}}$ and vanishes on the boundary $\partial\mathcal{Q}$.
\end{enumerate}
As a result, the corresponding solution set of this problem is also isomorphic to the solution set $H(\mathcal{E})$ and is described as:
\begin{equation}\label{eq: set of continuous harmonic funcs}
	H(\mathcal{Q}) = \{ \sum_{n\geq 1} a_n h_{\theta,n}: (a_n)_n\subset\mathbb{R}\text{ and }\vert a_n\vert^{1/n}\to 0\},
\end{equation}
where
\begin{equation}\label{eq: h_n^sigma}
	h_{\theta,n} (x,y) := \Im \bigl((x/\sin\theta+y\cot\theta+{i}y)^{n\pi/\theta}\bigr).
\end{equation}

The reason for mentioning these facts about the continuous problem is twofold: on one hand, we will see later the analogous description to \eqref{eq: set of continuous harmonic funcs} of the discrete counterpart; on the other hand, we will also emphasize a convergence of the discrete harmonic functions to the continuous ones.

\subsection*{Generating functions and the kernel}
Let $h(i,j)$ be any harmonic function satisfying the conditions \ref{assumption: H4} and \ref{assumption: H5}. We now introduce its generating function
\begin{equation}\label{eq: H(x,y)-def}
	H(x,y)=  \sum_{i,j\geq 1} h(i,j) x^{i-1}y^{j-1},
\end{equation}
and its sections
\begin{equation}\label{eq: H(x,0)-H(0,y)-def}
	H(x,0)= \sum_{i\geq 1} h(i,1) x^{i-1}
	\quad\text{and}\quad H(0,y)= \sum_{j\geq 1} h(1,j) y^{j-1}.
\end{equation}
From the Assumptions~\ref{assumption: H1}, \ref{assumption: H4}, and \ref{assumption: H5}, one has
\begin{align*}
	H(x,y)&=\sum_{k,\ell}p_{k,\ell}x^{-k}y^{-\ell}\sum_{i,j\geq 1} h(i+k,j+\ell) x^{i+k-1}y^{j+\ell-1}\\
	&= \sum_{k,\ell\leq 1}p_{k,\ell }x^{-k}y^{-\ell}H(x,y) - \sum_{k\leq 1}p_{k,1} H(x,0) - \sum_{\ell\leq 1}p_{1,\ell} H(0,y) + p_{1,1}x^{-1}y^{-1}H(0,0).
\end{align*}
In other words, $H(x,y)$ satisfies the functional equation
\begin{equation}\label{eq: functional-eq}
	K(x,y)H(x,y) = K(x,0) H(x,0) +K(0,y) H(0,y)-K(0,0) H(0,0),
\end{equation}
where $K(x,y)$ is called the kernel and is characterized by the weights $\{p_{k,\ell}\}_{(k,\ell)\in\mathbb Z^2}$:
\begin{equation}\label{eq: K(x,y)-def}
	K(x,y)= xy\left(1-\sum p_{k,\ell}x^{-k}y^{-\ell}\right) =xy-\sum p_{k,\ell}x^{-k+1}y^{-\ell+1}.
\end{equation}

It is easily verified that solutions of Equation~\eqref{eq: functional-eq} take the form
\begin{equation}\label{eq: F+G/K}
	H(x,y)=\frac{F(x)+G(y)}{K(x,y)},
\end{equation}
for any power series $F,G\in\mathbb{C}[[t]]$ (formal ring of power series with coefficients in $\mathbb{C}$). However, not every quotient $\big(F(x)+G(y)\big)/K(x,y)$ defines a bivariate power series around the center $(0,0)$. This raises the task of finding appropriate $F,G\in\mathbb{C}[[t]]$ such that the quotient is a power series, which also leads to the study of the kernel's zero set, particularly near the point $(0,0)$. 

\subsection*{A subset of the kernel's zero set}
We now introduce the subset of the kernel's zero set under the approach of Cohen and Boxma in \cite{CoBo-83}:
\begin{equation}\label{eq: mathcal K}
	\mathcal{K}:=\{ (x,y)\in\mathbb{C}^2:K(x,y)=0\text{ and }\vert x\vert =\vert y\vert \leq 1\}.
\end{equation}

As will be shown in Section~\ref{sec: kernel}, $\mathcal{K}$ behaves differently in three subcases and consists of a unique connected set, except in the case $p_{1,1}=0$, $p_{1,0}\neq p_{0,1}$, where it also consists of an isolated point $(0,0)$. The projections of its connected subsets along the first and second variables are respectively denoted by $\mathcal{S}_1$ and $\mathcal{S}_2$. Let $X:\mathcal{S}_2\to\mathcal{S}_1$ be the mapping such that for all $y\in\mathcal{S}_2$, $(X(y),y)\in\mathcal{K}$. Under Assumption~\ref{assumption: monotonic}, $\mathcal{S}_1$ and $\mathcal{S}_2$ are Jordan curves, $X(y)$ is a well defined and one-to-one mapping from $\mathcal{S}_2$ onto $\mathcal{S}_1$. Further, as $y$ moves on $\mathcal{S}_2$, $X(y)$ moves on $\mathcal{S}_1$ in different orientation. Let $Y:\mathcal{S}_1\to\mathcal{S}_2$ be the inverse of $X$. Under Assumption\ref{assumption: monotonic}, we can first deduce further insights of the kernel's zero set in the bidisk (Proposition~\ref{prop: extend-solutions-intro}), and later define conformal mappings from the interior of $\mathcal{S}_1$ and $\mathcal{S}_2$ onto disjoint, complementary domains, which help constructing appropriate $F(x)$ and $G(y)$ in \eqref{eq: F+G/K}.

\subsection*{Main results}
In order to state our theorems properly, we introduce here some new notations. The interior (resp.~exterior) of the Jordan curve $\mathcal{S}_1$ is denoted by $\mathcal S_1^+$ (resp.~$\mathcal S_1^-$). $\mathcal{S}_2^+$ and $\mathcal{S}_2^-$ are denoted similarly.  By analytic continuation, we first have the following result concerning solutions of the kernel in the bidisk.
\begin{proposition}\label{prop: extend-solutions-intro}
	Under Assumption \ref{assumption: monotonic},
	\begin{enumerate}[label=\textnormal{(\roman{*})},ref=\textnormal{(\roman{*})}]
		\item\label{item: Prop1-item1} The function $X:\mathcal{S}_2\to\mathcal{S}_1$, defined such that $(X(y),y)\in\mathcal{K}$ for all $y\in\mathcal{S}_2$, can be continued analytically and is uni-valued on $\mathcal{S}_2^-\cap\mathcal{C}^+$. In other words, the extension of $X(y)$ has no branch point on $\mathcal{S}_2^-\cap\mathcal{C}^+$. We have an analogous statement for $Y(x)$, which is the inverse of $X(y)$;
		
		\item\label{item: Prop1-item2} $K(x,y)$ has no root on $\mathcal{S}_1^+\times\mathcal{S}_2^+$.
	\end{enumerate}
\end{proposition}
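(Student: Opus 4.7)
The plan is to use the analytic and algebraic structure of the kernel, together with Assumption~\ref{assumption: monotonic} and a standard argument-principle count. For part~(i), I would view $X(y)$ as a branch of the implicit function defined by $K(X(y),y)=0$. Thanks to Assumption~\ref{assumption: H1}, for each fixed $y$ the function $x\mapsto K(x,y)$ is analytic on the closed unit disk (no negative powers of $x$ appear because $k\leq 1$), so the implicit function theorem furnishes a single-valued analytic extension of $X$ along any path as long as $\partial_x K(X(y),y)\neq 0$. The critical task is therefore to show that no branch point of the curve $K=0$ lies in $\mathcal{S}_2^-\cap\mathcal{C}^+$. Here I would combine the precise description of $\mathcal{K}$ obtained in Section~\ref{sec: kernel} with Assumption~\ref{assumption: monotonic}, which makes $X:\mathcal{S}_2\to\mathcal{S}_1$ one-to-one with reversed orientation and therefore prevents two sheets of the algebraic curve from colliding inside $\mathcal{S}_2^-\cap\mathcal{C}^+$. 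Once the absence of branch points is established, the monodromy theorem delivers the single-valued analytic continuation on the whole region; the assertion for $Y$ follows by the analogous argument with the roles of $x$ and $y$ interchanged.

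For part~(ii), the plan is a winding-number count. Fix $y\in\mathcal{S}_2^+$ and let $N(y)$ denote the number of zeros of $x\mapsto K(x,y)$ inside $\mathcal{S}_1^+$. By the very definition~\eqref{eq: mathcal K} of $\mathcal{K}$, a zero $(x,y)$ of $K$ with $x\in\mathcal{S}_1$ would force $|y|=|x|$ and $y\in\mathcal{S}_2$, contradicting $y\in\mathcal{S}_2^+$; thus $K(\cdot,y)$ has no zero on $\mathcal{S}_1$ and
\[
N(y)=\frac{1}{2\pi i}\oint_{\mathcal{S}_1}\frac{\partial_x K(x,y)}{K(x,y)}\,dx
\]
is a well-defined non-negative integer depending continuously, hence constantly, on $y$ in the connected set $\mathcal{S}_2^+$. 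It then suffices to evaluate $N$ at one convenient value of $y\in\mathcal{S}_2^+$ where the roots of $K(\cdot,y)$ can be directly located and seen to lie outside $\mathcal{S}_1^+$; Assumption~\ref{assumption: H2}, which identifies $(1,1)$ as the only unimodular zero of the characteristic function, should in particular ensure that no zero can migrate into $\mathcal{S}_1^+$ as $y$ traces out $\mathcal{S}_2^+$.

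The main obstacle, in my view, is part~(i): excluding branch points throughout the whole region $\mathcal{S}_2^-\cap\mathcal{C}^+$, and not merely on the curve $\mathcal{S}_2$ itself. In the small-step setting of \cite{FaIaMa-17} one has finitely many explicit sheets and an explicit discriminant to control them; here, however, the large negative jumps allowed by~\ref{assumption: H1} can push the $x$-degree of $K$ arbitrarily high, so Assumption~\ref{assumption: monotonic} together with the Cohen--Boxma style analysis of $\mathcal{K}$ will have to take the place of that explicit discriminant computation. Once this geometric input is secured, both the continuation in~(i) and the constancy/counting argument in~(ii) reduce to routine manipulations.
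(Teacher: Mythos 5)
Your plan for part \ref{item: Prop1-item1} stops exactly where the real work begins. The claim that Assumption~\ref{assumption: monotonic} ``prevents two sheets of the algebraic curve from colliding inside $\mathcal{S}_2^-\cap\mathcal{C}^+$'' does not follow from the injectivity and orientation reversal of $X$ on the boundary curve $\mathcal{S}_2$: that is information about the boundary only and says nothing about the continuation in the interior of the region. The mechanism the paper actually uses, and which is absent from your proposal, is a modulus comparison: Lemma~\ref{lem: solution-neigborhood-S2} gives the strict inequality $\vert X(y)\vert<\vert y\vert$ on the $\mathcal{S}_2^-$ side of a neighbourhood of $\mathcal{S}_2$, and since a path in $\mathcal{S}_2^-\cap\mathcal{C}^+$ never re-crosses $\mathcal{S}_2$, the inequality persists along the entire continuation. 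When a loop closes, the endpoint of the continuation is then pinned down --- if it did not return to the starting value one could continue toward $0$ and contradict either the local structure of $K$ near $(0,0)$ (Lemma~\ref{lem: X(y) around 0}) or the fact that $(0,0)$ is not a root of $K$ when $p_{1,1}\neq 0$. Trivial monodromy around all such loops, plus Morera's theorem, then gives single-valuedness. Note also that the three subcases ($p_{1,1}\neq 0$; $p_{1,1}=0$, $p_{0,1}=p_{1,0}$; $p_{1,1}=0$, $p_{0,1}\neq p_{1,0}$) require different base points, and in the last case $X$ and $Y$ must be treated separately because $x=0$ may be a branch point of $Y$ when $p_{1,0}=0$; your ``interchange $x$ and $y$'' remark glosses over this.

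For part \ref{item: Prop1-item2} the argument-principle idea is legitimate in spirit, but your justification that $N(y)$ is well defined is wrong. If $K(x,y)=0$ with $x\in\mathcal{S}_1$ and $y\in\mathcal{S}_2^+$, nothing forces $\vert x\vert=\vert y\vert$: membership in $\mathcal{K}$ requires $\vert x\vert=\vert y\vert$, whereas $x\in\mathcal{S}_1$ only guarantees the existence of some $y'=Y(x)$ with $(x,y')\in\mathcal{K}$, and the given $y$ need not be that $y'$. Because the walk has arbitrarily large negative jumps, $x\mapsto K(x,y)$ has many roots besides $X(y)$, and excluding them from the contour $\mathcal{S}_1$ is essentially the boundary version of the very statement you are proving, so the winding-number integral is not known to be defined. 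The paper instead argues by contradiction via path lifting in $V[K]$: a putative root in $\mathcal{S}_1^+\times\mathcal{S}_2^+$ lies in $\{\vert y\vert<\vert x\vert<1\}$ or $\{\vert x\vert<\vert y\vert<1\}$ (the case $\vert x\vert=\vert y\vert$ would put it in $\mathcal{K}$, hence on the boundary curves), and lifting a path from $x$ to $0$ inside $\mathcal{S}_1^+$ (resp.\ a path through $y_0$ into $\mathcal{S}_2^-$) contradicts the modulus comparison near $(0,0)$ or near $\mathcal{K}$. As written, both parts of your proposal leave the decisive steps unproved.
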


Proposition~\ref{prop: extend-solutions-intro} may seem technical but it is crucial to indicate domains where $H(x,y)$ in \eqref{eq: F+G/K} is well defined. Particularly in the case $p_{1,1}=0$, we will show that the generating function $H(x,y)$, initially defined on $\mathcal{S}_1^+\times\mathcal{S}_2^+$, also admits an extension around $(0,0)$. It is also worth mentioning that its item \ref{item: Prop1-item1} is a new statement, whereas its item \ref{item: Prop1-item2} has been stated for the symmetric cases of \cite{HoRaTa-20}, and will be proven in Section~\ref{sec: kernel} for non-symmetric cases.

Our main theorems also are the main results in \cite{HoRaTa-20}, but now applied to non-symmetric cases under two technical assumptions \ref{assumption: theta_1,theta_2 not=0} and \ref{assumption: alpha'(z)}. Roughly speaking, \ref{assumption: theta_1,theta_2 not=0} ensures that $\mathcal{S}_1$ and $\mathcal{S}_2$ do not admit a cusp at $1$, and \ref{assumption: alpha'(z)} ensures that a ``shift" function, appearing in a conformal welding problem (Section~\ref{sec: conformal welding}), has power growth derivatives. Under the assumptions \ref{assumption: monotonic}--\ref{assumption: alpha'(z)}, $\mathcal{S}_1^+$ and $\mathcal{S}_2^+$ are mapped conformally onto two disjoint complementary domains, respectively by two mappings $\psi_1$ and $\psi_2$ such that:
\begin{enumerate}[label=\textnormal{(\roman{*})},ref=\textnormal{(\roman{*})}]
	\item $\psi_1^+(X(y)) = \psi_2^+(y)$ for all $y\in\mathcal{S}_2$, where $\psi_1^+$ and $\psi_2^+$ are respectively the continuous extension of $\psi_1$ and $\psi_2$ to $\mathcal{S}_1$ and $\mathcal{S}_2$;
	
	\item $\psi_1^+(1)=\psi_2^+(1)=\infty$;
	
	\item $\psi_1(\overline{z}) = \overline{\psi_1(z)}$ for all $z\in\mathcal{S}_1^+$, and $\psi_2(\overline{z}) = \overline{\psi_2(z)}$ for all $z\in\mathcal{S}_2^+$.
\end{enumerate}
$\psi_1$ and $\psi_2$ will be showed to be well defined around $0$ whether or not $0$ is in $\mathcal{S}_1^+,\, \mathcal{S}_2^+$.

We also introduce a related angle:
\begin{equation}\label{eq: theta}
	\theta:= \arccos\frac{-\sum k\ell p_{k,\ell}}{\sqrt{\sum k^2 p_{k,\ell}}\sqrt{\ell^2 p_{k,\ell}}}.
\end{equation}
It turns out that  $\theta$ is the arithmetic mean of the angles at $1$ of $\mathcal{S}_1$ and $\mathcal{S}_2$, and contains information about the growth of harmonic functions.

Finally, we introduce the following polynomials forming a basis of the space $\mathbb{R}[X]$:
\begin{equation}\label{eq: family-polynomials}
\begin{split}
	& P_{2n+1} = (X-\psi_1(0))^{n+1}  (X-\psi_2(0))^n, n\geq 0,\\
	& P_{2n} = (X-\psi_1(0))^n  (X-\psi_2(0))^n, n\geq 1.
\end{split}
\end{equation}

We then have our first theorem.
\begin{theorem}\label{thm: main_intro-1}
	Under the assumptions \ref{assumption: monotonic}--\ref{assumption: alpha'(z)}, for any $n\geq 1$, the function
	\begin{equation}\label{eq: H_n(x,y)}
		H_n(x,y):=\frac{P_n(\psi_{1}(x))-P_n(\psi_{2}(y))}{K(x,y)}
	\end{equation}
	defines the generating function of a harmonic function $h_n(i,j)$ satisfying \ref{assumption: H4} and \ref{assumption: H5}. Moreover, $H_n$ is analytic in $\mathcal{S}_1^+\times\mathcal{S}_2^+$.
	
	In particular, the discrete Laplace transform of normalized discrete harmonic functions converges pointwise to the continuous Laplace transform of continuous harmonic functions, that is,
	\begin{equation}
		\lim_{m\to\infty}\frac{c}{m^{n\pi/\theta+1}}\mathcal{L} h_{n}(\lfloor mx\rfloor ,\lfloor my\rfloor)=  \mathcal{L}h_{\theta,n}(x,y),
	\end{equation}
	for any $x,y>0$, where $c$ is a non-vanishing constant and $h_{n,\theta}$ is defined in \eqref{eq: h_n^sigma}, with $\theta$ in \eqref{eq: theta}, and the Laplace transform $\mathcal{L}$ is defined as follows
\begin{equation}
\mathcal{L}f(x,y):=\begin{cases}
\sum_{u,v=0}^\infty f(u,v)e^{-(ux+vy)}, & f:\mathbb{N}^2\to\mathbb{C},\\
\int_{0}^{\infty}\int_{0}^{\infty} f(u,v)e^{-(ux+vy)}dudv, & f:\mathcal{Q}\to\mathbb{C}.
\end{cases}
\end{equation}
\end{theorem}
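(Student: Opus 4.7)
The plan is to adapt the strategy of the symmetric analog in \cite{HoRaTa-20} to the non-symmetric setting. For the first assertion, I first establish analyticity of $H_n$ on $\mathcal{S}_1^+\times\mathcal{S}_2^+$: by Proposition~\ref{prop: extend-solutions-intro}\ref{item: Prop1-item2} the kernel $K$ has no zero there, and $\psi_1,\psi_2$ are analytic on $\mathcal{S}_1^+,\mathcal{S}_2^+$ by construction, so the quotient $H_n$ is analytic on the full bidomain. When $(0,0)\in\mathcal{S}_1^+\times\mathcal{S}_2^+$ this already yields a Taylor expansion at the origin whose coefficients define $h_n(i,j)$. In the remaining case $p_{1,1}=0$ with $p_{1,0}\neq p_{0,1}$, the origin is an isolated point of $\mathcal{K}$; there I would use the welding identity $\psi_1^+(X(y))=\psi_2^+(y)$ together with Proposition~\ref{prop: extend-solutions-intro}\ref{item: Prop1-item1} to show that the numerator of $H_n$ vanishes along the local branch of $\{K=0\}$ through $(0,0)$, so that the singularity is removable and $H_n$ continues analytically to a neighborhood of the origin.

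Once the expansion $H_n(x,y)=\sum_{i,j\geq 1}h_n(i,j)x^{i-1}y^{j-1}$ is in hand, condition \ref{assumption: H5} is built into the index range. For \ref{assumption: H4} I multiply by $K$: the identity
\begin{equation*}
K(x,y)H_n(x,y)=P_n(\psi_1(x))-P_n(\psi_2(y))
\end{equation*}
is of the form $F(x)+G(y)$, matching the structure of the functional equation \eqref{eq: functional-eq}. Separating the $x$-only and $y$-only parts identifies $P_n(\psi_1(x))$ with $K(x,0)H_n(x,0)$ (up to an additive constant absorbed into $K(0,0)H_n(0,0)$) and symmetrically for $-P_n(\psi_2(y))$, so \eqref{eq: functional-eq} holds; extracting the coefficient of $x^{i-1}y^{j-1}$ for $i,j\geq 1$ then yields $\Delta(h_n)(i,j)=0$. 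The Laplacian sum is finite at each $(i,j)$ because \ref{assumption: H1} restricts $(k,\ell)$ to $k,\ell\leq 1$ and \ref{assumption: H5} kills the terms with $i+k\leq 0$ or $j+\ell\leq 0$.

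For the Laplace-transform asymptotic, the correspondence between $\mathcal{L}h_n$ and $H_n$ evaluated near $(1,1)$ reduces the problem to analyzing the leading singular behavior of $H_n$ at this corner. Under \ref{assumption: theta_1,theta_2 not=0} and \ref{assumption: alpha'(z)}, a corner analysis at $1$ of the conformal welding produces local expansions of the form $\psi_i(z)\sim c_i(1-z)^{-\pi/\theta_i}$, where $\theta_i$ is the interior angle of $\mathcal{S}_i$ at $1$ and $\theta=(\theta_1+\theta_2)/2$ as in \eqref{eq: theta}. The polynomial $P_n$, of degree $n$, amplifies this to $P_n(\psi_i(z))\sim c_i^n(1-z)^{-n\pi/\theta_i}$. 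Substituting into $H_n$, using the Taylor expansion of $K$ at $(1,1)$ (controlled by the covariance matrix of the walk), and rescaling, the limit should match $\mathcal{L}h_{\theta,n}(x,y)$ up to the normalizing constant $c$ via a two-variable Tauberian or transfer-theorem argument. The hard part is precisely this last step: although $\theta_1$ and $\theta_2$ enter the two conformal expansions separately, only their mean $\theta$ survives in the limit, so the proof has to exhibit the cancellation of the asymmetric corrections, for which the welding identity $\psi_1^+\circ X=\psi_2^+$ should be the essential ingredient.
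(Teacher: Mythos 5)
Your treatment of the first assertion is essentially the paper's: analyticity of $H_n$ on $\mathcal{S}_1^+\times\mathcal{S}_2^+$ from Proposition~\ref{prop: extend-solutions-intro}\ref{item: Prop1-item2}, and the observation that $K\cdot H_n$ has the form $F(x)+G(y)$, which after setting $y=0$ and $x=0$ identifies $F$ and $G$ with the sections and recovers \eqref{eq: functional-eq}, hence harmonicity coefficient by coefficient. Two caveats on the behaviour at the origin. First, your case split is incomplete: when $p_{1,1}=0$ and $p_{1,0}=p_{0,1}$ the origin lies on $\mathcal{S}_1\times\mathcal{S}_2$ itself (neither in the open bidomain nor an isolated point of $\mathcal{K}$), and $K(0,0)=0$ there too, so the removable-singularity argument is needed in that subcase as well. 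Second, the removability claim needs more than ``the numerator vanishes along the local branch of $\{K=0\}$'': one must match the \emph{orders} of vanishing. The paper does this via Proposition~\ref{prop: psi_1,2(0)} ($\psi_1(0)=\psi_2(0)$ when $p_{1,1}=0$, with $\psi_1\circ X=\psi_2$ near $0$) and the Weierstrass preparation theorem, writing both $\psi_1(x)-\psi_2(y)$ and $K(x,y)$ as a unit times $(x-X(y))$ near $(0,0)$; you should make this step explicit.

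The genuine error is in the last part. You posit $\psi_i(z)\sim c_i(1-z)^{-\pi/\theta_i}$ and then hope for a cancellation of the ``asymmetric corrections'' at the Tauberian stage. With those expansions the two terms $P_n(\psi_1(x))$ and $P_n(\psi_2(y))$ would blow up at $(1,1)$ with \emph{different} exponents $n\pi/\theta_1\neq n\pi/\theta_2$, and no single normalization $m^{-n\pi/\theta-1}$ could yield a nondegenerate limit; no downstream cancellation can repair this. The correct local behaviour is Lemma~\ref{lem: psi1-psi2}\ref{item: asymptotic psi1 psi2}: both $\psi_1$ and $\psi_2$ behave like $\text{const}\cdot(1-z)^{-2\pi/(\theta_1+\theta_2)}=\text{const}\cdot(1-z)^{-\pi/\theta}$ at $1$. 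The averaging of the two angles is performed inside the conformal welding, not in the limit: $\pi_i$ contributes the exponent $\pi/\theta_i$, while $\chi_i$ contributes $2\theta_i/(\theta_1+\theta_2)$ because the angle of $\mathcal{G}$ at $1$ is $\frac{\theta_1}{\theta_1+\theta_2}2\pi$ (Lemma~\ref{lem: chi1-chi2-Jordan curve}), and the product of the two exponents equals $2\pi/(\theta_1+\theta_2)$ for both $i$. Once the exponents are corrected, both sides of $H_n$ carry the same singular exponent $n\pi/\theta$ at $1$ and the Laplace-transform limit follows the symmetric-case computation of \cite[Sec.~4.2, Prop.~11]{HoRaTa-20}, which is exactly what the paper invokes.
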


From the harmonic functions constructed in Theorem~\ref{thm: main_intro-1}, we can describe the set of harmonic functions as a vector space.

\begin{theorem}\label{thm: main_intro-2}
	Under the assumptions \ref{assumption: monotonic}--\ref{assumption: alpha'(z)}, the space of discrete harmonic functions is isomorphic to the vector space of formal power series $\mathbb{R}_{0}[[t]]$ with vanishing constant term, through the isomorphism 
	\begin{equation*}
	\Phi:\left\lbrace\begin{matrix}
	\mathbb{R}[[t]]&\longrightarrow&H(\mathbb N^2)\\
	\sum_{n\geq 1}a_{n}t^n&\longmapsto& \sum_{n\geq 1}a_{n}h_n.
	\end{matrix}\right.
	\end{equation*}
\end{theorem}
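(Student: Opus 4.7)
The plan is to verify that $\Phi$ is well-defined, injective, and surjective, linearity being immediate from the definition. The main technical inputs are the triangular degree structure of the polynomials $P_n$ and the conformal welding relation $\psi_1^+(x)=\psi_2^+(y)$ along the curve $\mathcal{K}$.

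For \emph{well-definedness}, I would first estimate the valuation at $(0,0)$ of $H_n(x,y)$. Since $P_n$ has degree $n$ and vanishes at $\psi_1(0)$ and $\psi_2(0)$ with total multiplicity $n$, and since $\psi_1,\psi_2$ are locally conformal near $0$ (their derivatives there being non-zero), a direct expansion shows that the numerator $P_n(\psi_1(x))-P_n(\psi_2(y))$ has order at $(0,0)$ growing linearly in $n$. The denominator $K(x,y)$ has bounded order at the origin (either $0$ when $p_{1,1}\neq 0$, or controlled by Proposition~\ref{prop: extend-solutions-intro} otherwise), so the order of $H_n$ also grows linearly with $n$. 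This forces $h_n(i,j)=0$ whenever $i+j$ is smaller than a constant times $n$, so that $\Phi(\sum_n a_n t^n)=\sum_n a_n h_n$ is a pointwise finite sum.

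For \emph{injectivity}, if $\sum_n a_n h_n\equiv 0$ then $\sum_n a_n H_n(x,y)=0$ as a formal bivariate series, and multiplying by $K(x,y)$ yields the identity $\sum_n a_n P_n(\psi_1(x))=\sum_n a_n P_n(\psi_2(y))$. Since the left side depends only on $x$ and the right only on $y$, both equal a common constant $c$. Using that $\psi_1$ is locally invertible at $0$, I can rewrite the left identity as $\sum_n a_n P_n(Z)=c$ as a formal identity in a new variable $Z$ near $\psi_1(0)$; the triangular degree structure of $\{P_n\}_{n\geq 1}$ then forces $a_n=0$ for every $n\geq 1$.

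For \emph{surjectivity}, I start from $K(x,y)H(x,y)=F(x)+G(y)$ with $F,G\in\mathbb{R}[[t]]$, so that $F(x)+G(y)=0$ along $\mathcal{K}$. The conformal welding along $\mathcal{K}$ lets me glue $F\circ\psi_1^{-1}$ on $\psi_1(\mathcal{S}_1^+)$ and $-G\circ\psi_2^{-1}$ on $\psi_2(\mathcal{S}_2^+)$, two disjoint complementary domains separated by the welding curve, into a single function $\tilde F$; Proposition~\ref{prop: extend-solutions-intro}\ref{item: Prop1-item2} guarantees that no singularity of $H$ obstructs this gluing. Once $\tilde F$ is in hand, the fact that $\{P_n\}_{n\geq 1}$ is a basis of $\mathbb{R}[X]$ modulo constants (completed to formal series by the degree filtration) yields a unique formal expansion $\tilde F(X)-\tilde F(\psi_1(0))=\sum_{n\geq 1}a_n P_n(X)$. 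Substituting $X=\psi_1(x)$ and $X=\psi_2(y)$ and subtracting then gives $F(x)+G(y)=\sum_n a_n(P_n(\psi_1(x))-P_n(\psi_2(y)))$, so that $H=\sum_n a_n H_n$ and $h=\Phi(\sum_n a_n t^n)$. The hard part is precisely this gluing step: one has to show that $\tilde F$ is not merely a piecewise analytic object but a genuine formal power series around $\psi_1(0)$ (and $\psi_2(0)$) whose Taylor coefficients are simultaneously consistent with those of $F$ and $G$ at $0$. This amounts to checking that the conformal welding transports the formal analytic structure correctly across the two complementary domains, relying on the regularity of $\psi_1,\psi_2$ near $0$ established in the analysis underlying Theorem~\ref{thm: main_intro-1}.
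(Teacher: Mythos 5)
Your well-definedness and injectivity steps are in the right spirit and essentially track the paper's Lemmas~\ref{lem: h_n(i,j)=0} and \ref{lem: unique sequence}, with one imprecision worth flagging: in the case $p_{1,1}\neq 0$ one has $\psi_1(0)\neq\psi_2(0)$, and then the valuations of $P_n$ at $\psi_1(0)$ are \emph{not} strictly increasing in $n$ (both $P_{2n-1}$ and $P_{2n}$ contribute first at order $n$ in $(X-\psi_1(0))$), so the single identity $\sum_n a_nP_n(Z)=c$ expanded at $\psi_1(0)$ gives one equation but two new unknowns per order. You must combine it with the expansion of $\sum_n a_nP_n(Z)=c$ at $\psi_2(0)$, where the parity of the interlacing is shifted; this is exactly the role of the two non-vanishing values $h_n(\lfloor n/2+1\rfloor,1)$ and $h_n(1,\lfloor n/2+1\rfloor)$ in Lemma~\ref{lem: h_n(i,j)=0} and of the two boundary sequences in Lemma~\ref{lem: unique sequence}. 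This is fixable.

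The surjectivity step, however, contains a genuine gap that is not just the ``hard part'' you flag. Your plan is to glue $F\circ\psi_1^{-1}$ on $\psi_1(\mathcal S_1^+)$ and $-G\circ\psi_2^{-1}$ on $\psi_2(\mathcal S_2^+)$ into a single function $\widetilde F$ across the welding curve. That gluing is an analytic continuation argument and presupposes that $F(x)=K(x,0)H(x,0)+\dots$ and $G$ define analytic functions on $\mathcal S_1^+$ and $\mathcal S_2^+$, i.e.\ that $H(x,0)$ and $H(0,y)$ have positive (in fact sufficiently large) radii of convergence. For a \emph{general} discrete harmonic function this fails: Theorem~\ref{thm: main_intro-2} asserts an isomorphism with \emph{all} of $\mathbb R_0[[t]]$, including sequences $(a_n)$ growing arbitrarily fast, for which $h=\sum a_nh_n$ has super-exponentially growing boundary values and $H(x,0)$ has radius of convergence zero. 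This is precisely why Lemma~\ref{lem: BVP} carries the hypothesis that the radii of convergence are at least $1$, and why that analytic BVP route is used only to produce the individual polynomial solutions $H_n$. The paper's surjectivity argument avoids analyticity altogether: it extracts $(a_n)$ from the boundary values via Lemma~\ref{lem: unique sequence}, sets $\widetilde H=\sum a_nH_n$, and proves $H=\widetilde H$ purely in the formal power series ring --- directly from the functional equation when $K(0,0)=-p_{1,1}\neq0$, and via the Weierstrass division theorem in $\mathbb R[[x,y]]$ (division by $x-X(y)$) when $p_{1,1}=0$. You would need to replace your gluing step by such a formal argument; as written, the conformal welding cannot ``transport the formal analytic structure'' because there is no analytic object to weld.
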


Among the additional assumptions \ref{assumption: monotonic}--\ref{assumption: alpha'(z)}, the condition~\ref{assumption: alpha'(z)} is particularly technical and difficult to verify, since it concerns the growth of a function appearing in the analysis. Therefore, we also introduce a sufficient condition asserting the moments of order 4 of the weights $\{p_{k,\ell}\}_{k,\ell}$ (Assumption~\ref{assumption: H7 moment order 4}), under which the assumption~\ref{assumption: alpha'(z)} holds true. Then Theorems~\ref{thm: main_intro-1} and \ref{thm: main_intro-2} are also valid under Assumptions~\ref{assumption: H1}--\ref{assumption: H7 moment order 4}, \ref{assumption: monotonic} and \ref{assumption: theta_1,theta_2 not=0}.

Although the results are similar to that of \cite{HoRaTa-20}, the arguments for non-symmetric cases require more assumptions and the analysis of quasiconformal mapping. We therefore dedicate a table of comparison between the two models to emphasize their differences (see Table~\ref{table: comparison}).
\begin{table}[t]
	\begin{center}
		\begin{tabular}{| c | c |}
			\hline
			Symmetric random walk in \cite{HoRaTa-20} & Non-symmetric random walk in this paper\\
			\hline
			$\mathcal{S}_1$ and $\mathcal{S}_2$ are non-self-intersecting.&$\mathcal{S}_1$ and $\mathcal{S}_2$ can be self-intersecting.\\
			 & (requires Assumption \ref{assumption: monotonic} to exclude\\
			 &the cases of self-intersecting curves)\\
			 \hline
			$X(y)\in\mathcal{S}_1$ and $y\in\mathcal{S}_2$ & $X(y)\in\mathcal{S}_1$ and $y\in\mathcal{S}_2$\\
			have different orientations. & can have the same orientations.\\
			 & (requires Assumption \ref{assumption: monotonic} to exclude\\
			 &the cases of  the same orientations)\\
			\hline
			Angles at $1$ of $\mathcal{S}_1$ and $\mathcal{S}_2$ are equal & Angles at $1$ of $\mathcal{S}_1$ and $\mathcal{S}_2$ can be different\\
			and different from $0$& or equal to $0$.\\
			 & (requires assumption \ref{assumption: theta_1,theta_2 not=0} to exclude\\
			 &the cases of the cusps at $1$)\\
			\hline
			\multicolumn{2}{|c|}{Case $p_{1,1}=0$: $\mathcal{S}_1^+$ and $\mathcal{S}_2^+$ contain $0$.}\\
			\multicolumn{2}{|c|}{Case $p_{1,1}=0$, $p_{1,0}=p_{0,1}$: $\mathcal{S}_1$ and $\mathcal{S}_2$ contain $0$.}\\
			\multicolumn{2}{|c|}{(same proof of Proposition~\ref{prop: extend-solutions-intro})}\\
			\hline
			& Case $p_{1,1}=0$, $p_{1,0}\not= p_{0,1}$:\\
			(Case $p_{1,1}=0$, $p_{1,0}\not= p_{0,1}$&$0$ lies in the interior of one curve\\
			does not exist)&and the exterior of the other curve.\\
			 &(requires analytic continuations\\
			 &to properly define functions at $0$)\\
			\hline
			$\mathcal{S}_1$ and $\mathcal{S}_2$ coincide, & $\mathcal{S}_1$ and $\mathcal{S}_2$ do not coincide.\\
			$X(y)=\overline{y}$ for all $y\in\mathcal{S}_2$.&\\
			(Conformal welding is naturally&(Conformal welding is obtained\\
			obtained on the unit circle)&by solving a BVP with quasisymmetric shift\\
			&under Assumption~\ref{assumption: alpha'(z)})\\
			\hline
		\end{tabular}
	\end{center}
	\caption{Comparison between symmetric and non-symmetric cases}
	\label{table: comparison}
\end{table}

\subsection*{Acknowledgements}
We would like to thank Kilian Raschel and Pierre Tarrago for useful discussions. We also thank an anonymous referee for very constructive and detailed remarks: he/she made us realize that one of our technical assumptions can be replaced by a moment assumption.

\section{Study of the kernel}\label{sec: kernel}


Subsection~\ref{subsec: mathcal K-description} aims at describing the set $\mathcal{K}$ in \eqref{eq: mathcal K} through a parameterisation, and presenting its characteristics under Assumption~\ref{assumption: monotonic} about the path of its connected subset. We emphasize the behavior of $\mathcal{K}$ at $(1,1)$, as it admits a corner point, whose associated angles are strongly related to the growth of harmonic functions. These properties are roughly similar to those in \cite{HoRaTa-20} and admit vastly the same proof schemes. In Subsection~\ref{subsec: neighborhood of mathcal K}, we show some behaviors of the kernel's solutions near $\mathcal{K}$ by analytic continuation. Finally, Subsection~\ref{subsec: proof-Prop_extend-solutions} will present the proof of Proposition~\ref{prop: extend-solutions-intro}.

We introduce some notations. Let $\mathcal{C}$, $\mathcal{C}^+$, and $\overline{\mathcal{C}^+}$ respectively denote the unit circle, the open and closed unit disk. We also denote $\overline{z}$ as the complex conjugate of any point $z\in\mathbb{C}$. Finally, the abbreviation ``$\text{const}$" will indicate a non-vanishing constant in $\mathbb{C}$.

\subsection{Characteristics of $\mathcal{K}$}\label{subsec: mathcal K-description}

Adapting the analysis of Boxma and Cohen in \cite[Sec.~II.3.2]{CoBo-83}, we first parameterise $(x,y)$ with $\vert x\vert =\vert y\vert \leq 1$ as
\begin{equation}\label{eq: x_y_eta_s}
	(x,y) = (\eta s,\eta s^{-1} ),
\end{equation}
with $\eta,s\in\mathbb{C}$, $\vert \eta\vert \leq 1$ and $\vert s\vert=1$. This interpretation comes in handy since the trajectory of the variable $s$ is already known. The kernel \eqref{eq: K(x,y)-def} then becomes
\begin{equation}\label{eq: K-eta-s}
	K(\eta s,\eta s^{-1}) = \eta ^2-\sum p_{k,\ell}\eta ^{-k-\ell+2}s^{-k+\ell}.
\end{equation}

Before presenting our analysis, we make two remarks as they will be needed for future use:
\begin{enumerate}[label=\textnormal{(\roman{*})},ref=\textnormal{(\roman{*})}]
	
	\item Since $\sum p_{k,\ell}=1$, then $K(x,y)$ converges for all $\vert x\vert ,\vert y\vert \leq 1$, and thus it is analytic in the open bidisk $\mathcal{C}^+\times\mathcal{C}^+$. In particular, $K(x,y)$ is a polynomial if the jumps are bounded;
	
	\item Assumption~\ref{assumption: H2} (irreducible random walks) implies that $p_{1,1}$, $p_{0,1}$ and $p_{1,0}$ cannot simultaneously vanish.
\end{enumerate}


\begin{figure}[t]
	\centering
	\includegraphics[scale=0.2]{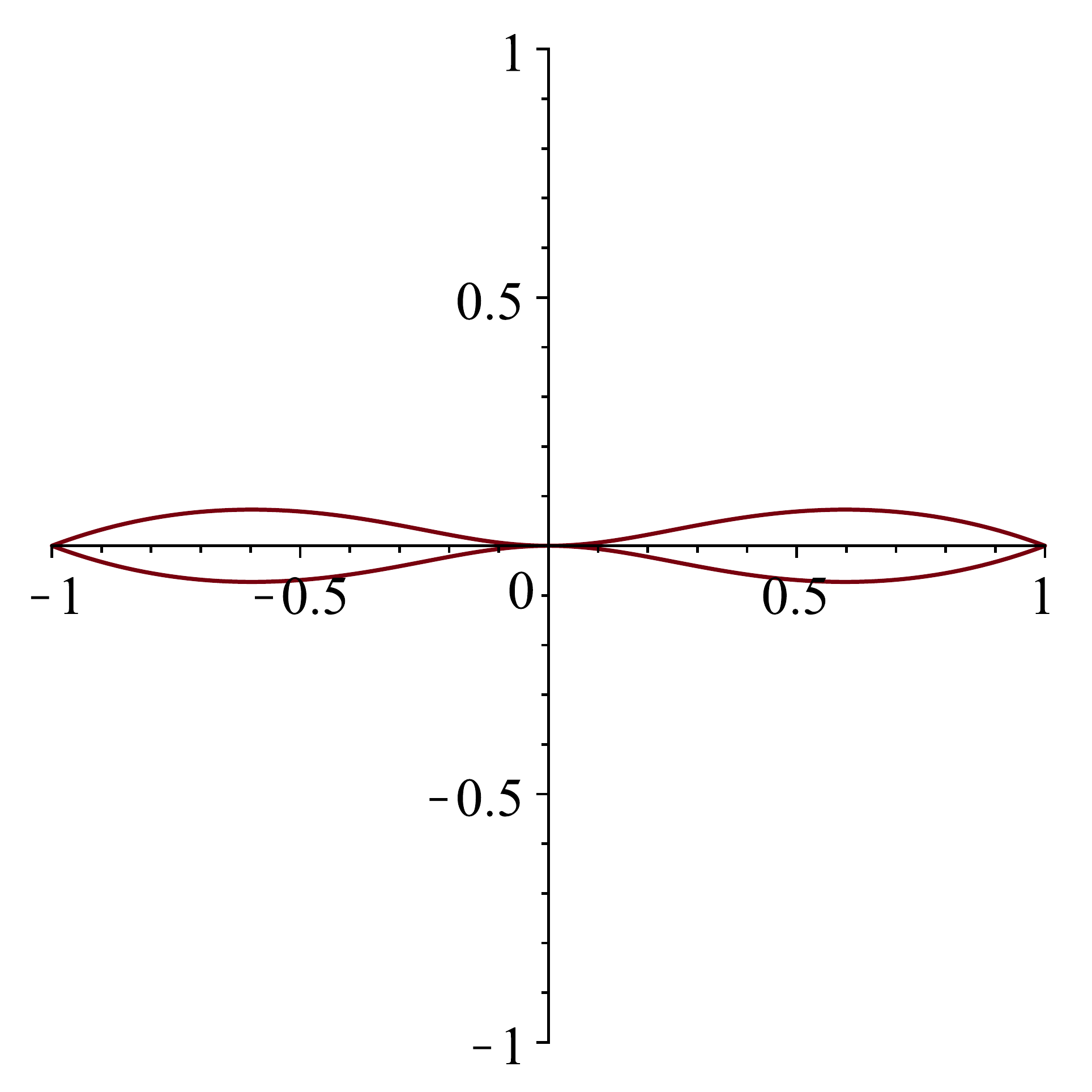}
	\includegraphics[scale=0.2]{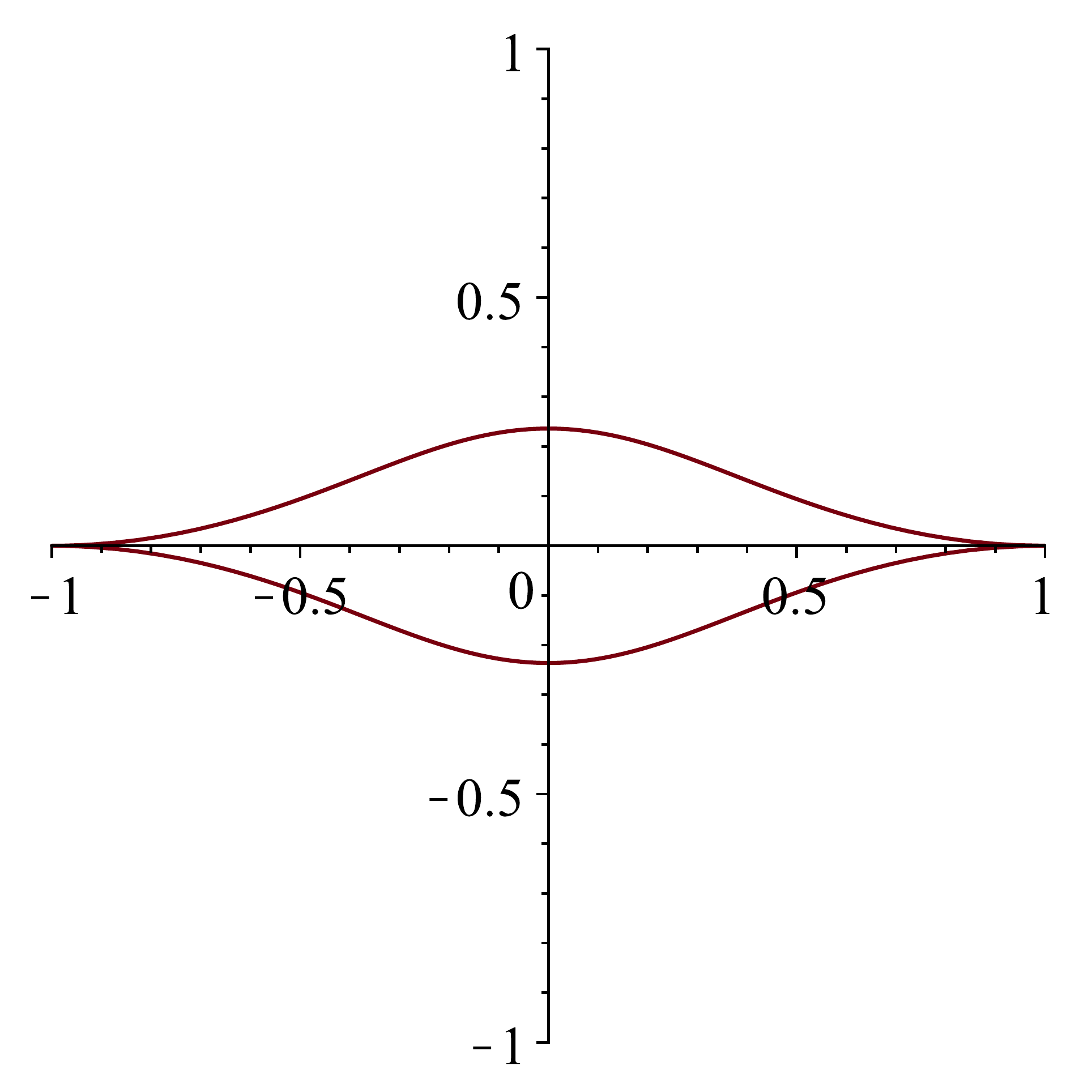}
	\includegraphics[scale=0.2]{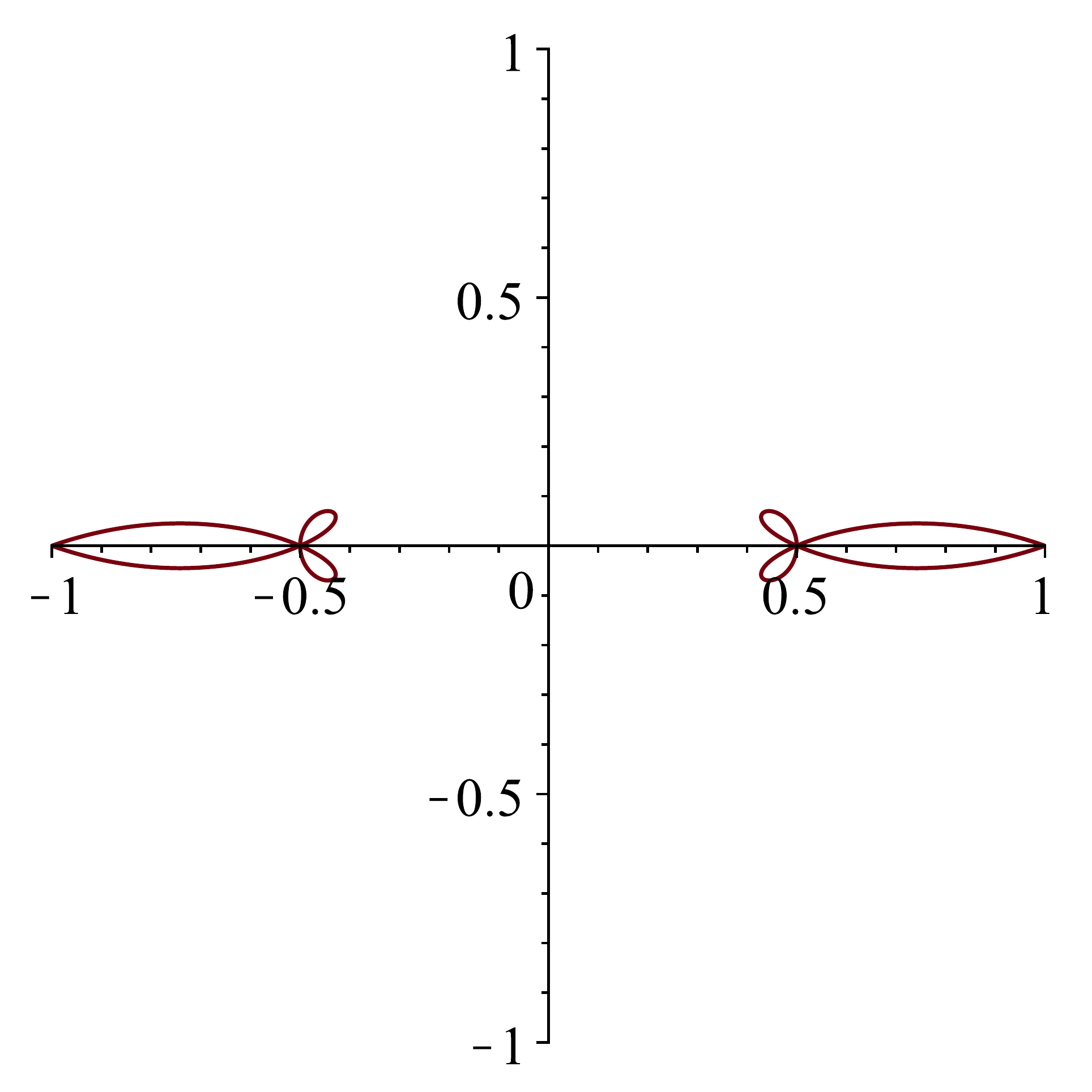}
	\caption{Solutions of $K(\eta s,\eta s^{-1})=0$, $\vert s\vert =1$ for some walks, from left to right: $p_{0,1}=p_{1,0}=p_{-1,1}=p_{0,-1}/2=1/5$; $p_{0,1}=p_{-1,0}=p_{1,-1}=1/3$; $p_{1,1}=p_{-1,1}=p_{0,-1}/2=1/4$.}
	\label{fig: eta(s)}
\end{figure}


The following lemma concerns the number of solutions of $K(\eta s,\eta s^{-1})$ as a function of $\eta$ with parameter $s\in\mathcal{C}$.

\begin{lemma}\label{lem: solutions-mathcal-K}
	Under Assumptions~\ref{assumption: H1}--\ref{assumption: H3}, for any $s\in \mathcal{C}$, the map $\eta\mapsto K(\eta s,\eta s^{-1})$ admits exactly two roots in $\overline{\mathcal{C}^+}$ (see Figure~\ref{fig: eta(s)}). In particular,
	\begin{enumerate}[label=\textnormal{(\roman{*})},ref=\textnormal{(\roman{*})}]
		
		\item\label{item: solutions-kernel-p11=0} If $p_{1,1}=0$, one root is identically zero, whereas the other one, denoted by $\eta(s)$, is a continuous function of $s$ on $\mathcal{C}$ and varies as follows: $\eta(s)\in\mathcal{C}^+$ for all $s\in\mathcal{C}\setminus\{\pm 1\}$ and $\eta(1) = -\eta(-1) = 1$. Besides, $\eta(-s)=\overline{\eta(-\overline{s})}=-\eta(s)$ for all $s\in\mathcal{C}$;
		
		\item\label{item: solutions-kernel-p11not=0} If $p_{1,1}\neq 0$, the two roots, denoted by $\eta(s)$ and $\eta_2(s)$, are continuous functions of $s$ on $\mathcal{C}$ and vary in $\mathcal{C}^+$ for all $s\in\mathcal{C}\setminus\{1\}$. Further, $K(\eta,1)$ has two distinct roots in $[-1,1]$, which are $1$ and another root in $(-1,0)$. Let $\eta(s)$ be the root such that $\eta(1)=1$. Besides, $\eta(s)=\overline{\eta(\overline{s})} = -\eta_2(-s)$ for all $|s|=1$.
	\end{enumerate}
\end{lemma}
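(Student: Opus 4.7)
The plan is to parameterize and apply Rouché's theorem, using Assumption~\ref{assumption: H2} to pin down boundary behavior. Fix $s\in\mathcal{C}$. Since every exponent $2-k-\ell$ appearing in \eqref{eq: K-eta-s} is non-negative (by \ref{assumption: H1}) and the coefficients sum to $1$, the map $\eta\mapsto K(\eta s,\eta s^{-1})$ is analytic on $\mathcal{C}^+$ and continuous on $\overline{\mathcal{C}^+}$. I would write it as $\eta^2-\phi_s(\eta)$ with $\phi_s(\eta):=\sum p_{k,\ell}\eta^{2-k-\ell}s^{\ell-k}=\eta^2\sum p_{k,\ell}x^{-k}y^{-\ell}$, where $x=\eta s$ and $y=\eta s^{-1}$. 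On $|\eta|=1$ one has $|x|=|y|=1$, hence $|\phi_s(\eta)|\leq 1=|\eta^2|$, and equality forces $x=y=1$ by \ref{assumption: H2} applied to $(x^{-1},y^{-1})$; so equality holds only at $(\eta,s)\in\{(1,1),(-1,-1)\}$. Rouché's theorem then yields exactly two zeros of $\eta\mapsto K(\eta s,\eta s^{-1})$ in $\mathcal{C}^+$ for $s\in\mathcal{C}\setminus\{\pm1\}$; at $s=\pm1$ one root lies at $\eta=s^{-1}$ on $\mathcal{C}$, and a standard continuity-of-roots argument (applying Rouché on $|\eta|=1-\epsilon$ and letting $\epsilon\to 0$) keeps the total count at two in $\overline{\mathcal{C}^+}$.

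Next I would split on whether $p_{1,1}$ vanishes. In case \ref{item: solutions-kernel-p11=0}, the condition $p_{1,1}=0$ is exactly the vanishing of the constant term of $\eta\mapsto K(\eta s,\eta s^{-1})$, so $\eta=0$ is one of the two roots and $\eta(s)$ is defined to be the other; its localization ($\eta(s)\in\mathcal{C}^+$ for $s\neq\pm 1$, and $\eta(1)=1$, $\eta(-1)=-1$ on $\mathcal{C}$) is then a direct translation of the Rouché step. In case \ref{item: solutions-kernel-p11not=0}, the constant term is nonzero so neither root can vanish. To locate them at $s=1$, I would view $K(\eta,\eta)$ as a real function on $[-1,1]$: it vanishes at $\eta=1$, equals $-p_{1,1}<0$ at $\eta=0$, and equals $1-\sum p_{k,\ell}(-1)^{k+\ell}>0$ at $\eta=-1$ thanks to the strict form of \ref{assumption: H2} evaluated at $(x,y)=(-1,-1)$. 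The intermediate value theorem then produces a second real root in $(-1,0)$, which by the Rouché count is the only one besides $\eta=1$ in $\overline{\mathcal{C}^+}$.

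Finally, the symmetries rest on two invariances of $K$. Reality of coefficients gives $K(\overline{x},\overline{y})=\overline{K(x,y)}$, and since $\overline{s}=s^{-1}$ on $\mathcal{C}$ this translates into $\eta(\overline{s})=\overline{\eta(s)}$; replacing $s$ by $-s$ then yields $\eta(-s)=\overline{\eta(-\overline{s})}$ in case \ref{item: solutions-kernel-p11=0}. The substitution $(\eta,s)\mapsto(-\eta,-s)$ leaves $(\eta s,\eta s^{-1})$ unchanged, so nonzero roots pair off accordingly: in case~\ref{item: solutions-kernel-p11=0} this gives $\eta(-s)=-\eta(s)$, and in case~\ref{item: solutions-kernel-p11not=0} the labeling $\eta(1)=1$ forces $\eta(s)=-\eta_2(-s)$. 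The main technical point I expect is the clean continuous and single-valued selection of the two roots in case~\ref{item: solutions-kernel-p11not=0} on all of $\mathcal{C}$: they could in principle coincide at isolated $s$, but the conjugation symmetry confines possible collisions to $s\in\{\pm 1\}$, where the computation above shows them to be distinct (one on $\mathcal{C}$, the other in $(-1,0)$), so a consistent labeling extends over the whole circle.
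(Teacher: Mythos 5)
Your overall strategy --- Rouch\'e's theorem on $|\eta|=1$ with the strict inequality supplied by \ref{assumption: H2}, the factor $\eta$ pulled out when $p_{1,1}=0$, the intermediate value argument on $[-1,0]$ when $p_{1,1}\neq 0$, and the two substitutions $(\eta,s)\mapsto(\overline\eta,\overline s)$ and $(\eta,s)\mapsto(-\eta,-s)$ for the symmetries --- is exactly the argument the paper invokes (it delegates the details to \cite[Lem.\,1 and Lem.\,2]{HoRaTa-20}, noting only that the roots need no longer be real). Two of your steps, however, do not hold as written. First, the boundary case $s=\pm1$: because the drift vanishes (Assumption~\ref{assumption: H3}), $\eta=1$ is a \emph{double} root of $\eta\mapsto K(\eta,\eta)$, since $\frac{d}{d\eta}K(\eta,\eta)\big\vert_{\eta=1}=2-\sum p_{k,\ell}(2-k-\ell)=\sum p_{k,\ell}(k+\ell)=0$. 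Consequently $\sum p_{k,\ell}\eta^{-k-\ell}>1$ for real $\eta<1$ near $1$, so your comparison $|\phi_1(\eta)|<|\eta|^2$ already fails at $\eta=1-\epsilon$ and Rouch\'e on the shrunken circle $|\eta|=1-\epsilon$ cannot be applied. The count with multiplicity in $\overline{\mathcal{C}^+}$ at $s=1$ is in fact three, not two; the lemma must be read as asserting two \emph{distinct} roots, and the correct passage to $s=\pm1$ is a Hurwitz-type limit (the double root at $(1,1)$ splits, for $s$ near $1$, into one simple root entering the disk and one leaving it), not a smaller Rouch\'e contour.

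Second, your claim that conjugation symmetry confines possible collisions of $\eta(s)$ and $\eta_2(s)$ to $s\in\{\pm1\}$ is unjustified: conjugation only shows that collisions come in pairs $\{s_0,\overline{s_0}\}$. The paper explicitly records, immediately after the lemma, that distinctness of the two roots for all $|s|=1$ is \emph{not} known at this stage; it is obtained only later, in the proof of Lemma~\ref{lem: S1-S2-smoothness-angle-at-1}, as a consequence of Assumption~\ref{assumption: monotonic}. The lemma itself does not need distinctness: one defines $\eta(s)$ by continuation from $\eta(1)=1$ (using $\eta(s)=\overline{\eta(\overline{s})}$ to fix the branch on the lower half-circle) and then simply \emph{defines} $\eta_2(s):=-\eta(-s)$, which your substitution argument shows is automatically the second root; no collision analysis is required. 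With these two repairs your proof is complete and coincides with the paper's intended argument.
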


We refer the readers to the proof of \cite[Lem.\,1 and Lem.\,2]{HoRaTa-20}, in which the arguments are based on Rouch\'e's theorem and can be applied for Lemma~\ref{lem: solutions-mathcal-K}. The only difference is that in \cite{HoRaTa-20}, $\eta(s)$ and $\eta_2(s)$ are real and vary in $[-1,1]$, whereas in the present work, we can only state that they vary in $\overline{\mathcal{C}^+}$.


Notice that if $p_{1,1}=0$, then one can factorize \eqref{eq: K-eta-s} as $K(\eta s,\eta s^{-1}) = \eta\widetilde{K}(\eta,s)$, where
\begin{equation}\label{eq: K-tilde}
	\widetilde{K}(\eta,s) := \eta - \sum p_{k,\ell}\eta^{-k-\ell+1}s^{-k+\ell},
\end{equation}
and $\widetilde{K}(\cdot,s)$ admits $\eta(s)$ as the unique root for any $\vert s\vert =1$. Since $\widetilde{K}(0,s) = - p_{1,0}s - p_{0,1}s^{-1}$, then $\eta(s)$ never meets $0$ if $p_{1,1}=0$, $p_{1,0}\not=p_{0,1}$, whereas $\eta(s)$ meets $0$ only at $s=\pm i$ if $p_{1,1}=0$, $p_{1,0}=p_{0,1}$.

In the case $p_{1,1}\not= 0$, since $(\eta_2(s)s,\eta_2(s)s^{-1})=(\eta(-s)(-s),\eta(-s)(-s)^{-1})$ for all $s\in\mathcal{C}$, then $\mathcal{K}$ can be described by either $\eta(s)$ or $\eta_2(s)$. Moreover, at this step, it is not clear that $\eta(s)$ and $\eta_2(s)$ are distinct roots for all $\vert s\vert =1$. In fact, the assertion holds true if and only if $\partial_\eta K(\eta s,\eta s^{-1})\not= 0$ for all $\vert s\vert = 1$ and $\eta=\eta(s)$, i.e.,
\begin{equation*}
	\sum p_{k,\ell}(-k-\ell)\eta(s)^{-k-\ell+1}s^{-k+\ell}\not= 0,
\end{equation*}
for all $\vert s\vert = 1$.

We can now properly define $\mathcal{K}$ as follows:
\begin{equation}\label{eq: mathcal K description}
	\mathcal{K} = 
	\begin{cases}
	\{ (\eta(s)s,\eta(s)s^{-1}):\vert s\vert=1 \}, &\text{ if } p_{1,1}=0,\, p_{1,0}=p_{0,1},\\
	\{ (\eta(s)s,\eta(s)s^{-1}):\vert s\vert=1 \} \cup \{(0,0)\}, &\text{ if } p_{1,1}=0,\, p_{1,0}\neq p_{0,1},\\
	\{ (\eta(s)s,\eta(s)s^{-1}):\vert s\vert=1 \}, &\text{ if } p_{1,1}\neq 0.
	\end{cases}
\end{equation}
We can also properly describe the projections of the connected subset of $\mathcal{K}$ along the first and second variables:
\begin{equation*}
	\mathcal{S}_1=\{\eta(s)s : \vert s\vert=1\} \quad \text{and}\quad 	\mathcal{S}_2=\{\eta(s)s^{-1}:\vert s\vert=1\}.
\end{equation*}

We now introduce the following assumption:
\begin{enumerate}[label=(K\arabic{*}),ref={\rm (K\arabic{*})}]
		\item\label{assumption: monotonic} $\mathcal{S}_1$ and $\mathcal{S}_2$ are non-self-intersecting; $\eta(s)s$ and $\eta(s)s^{-1}$ move in opposite orientations and admit non-vanishing derivatives for any $s\in\mathcal{C}\setminus\{\pm 1\}$ (resp.~$s\in\mathcal{C}\setminus\{1\}$) in the case $p_{1,1}=0$ (resp.~$p_{1,1}\not= 0$), given that the derivatives exist.
\end{enumerate}
For some examples not satisfying Assumption~\ref{assumption: monotonic}, see Figure~\ref{fig: S1-S2-not-satisfy-K1}.  Assumption \ref{assumption: monotonic} ensures that some characteristics of the curves $\mathcal{S}_1$ and $\mathcal{S}_2$ match those of the symmetric case in \cite{HoRaTa-20}, which suggests that certain of their results and arguments may be reapplied in the present work.

\begin{figure}[t]
	\centering
	\includegraphics[scale=0.5]{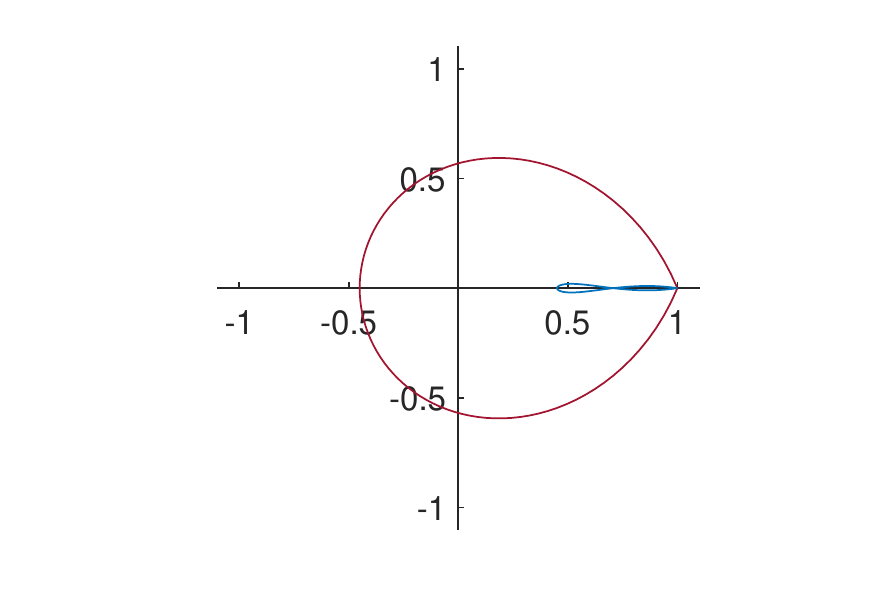}
	\includegraphics[scale=0.5]{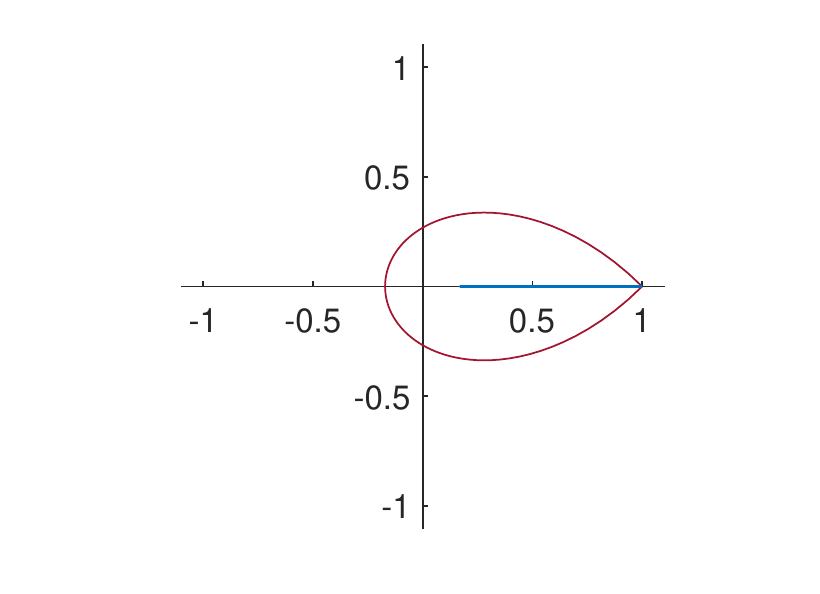}
	\includegraphics[scale=0.5]{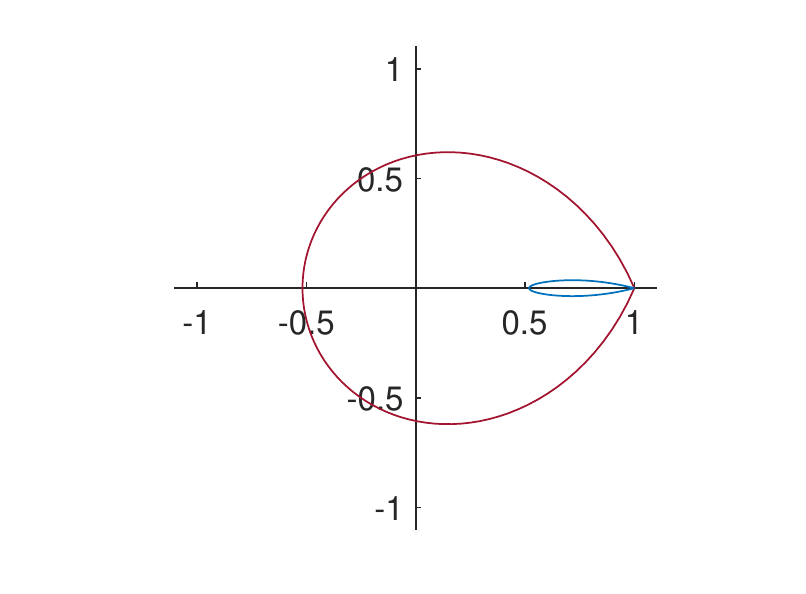}
	\caption{$\mathcal{S}_1$ (red) and $\mathcal{S}_2$ (blue) for some models not satisfying Assumption~\ref{assumption: monotonic}, from left to right: $p_{0,1}/8=p_{1,0}=p_{-1,0}/3=p_{1,-4}/2=1/14$ ($\mathcal{S}_2$ is self-intersecting); $p_{0,1}=p_{0,-1}=p_{-1,1}=p_{1,-1}=1/4$ ($\mathcal{S}_2$ is a segment); $p_{-1,0}=p_{1,-3}=p_{0,1}/3=1/5$ ($\eta(s)s$ and $\eta(s)s^{-1}$ move in the same orientations).}
	\label{fig: S1-S2-not-satisfy-K1}
\end{figure}

Lemma~\ref{lem: S1-S2} below presents some fundamental properties of $\mathcal{S}_1$ and $\mathcal{S}_2$, and emphasizes differences between the subcases of the description \eqref{eq: mathcal K description}.


\begin{figure}[t]
	\centering
	\includegraphics[scale=0.2]{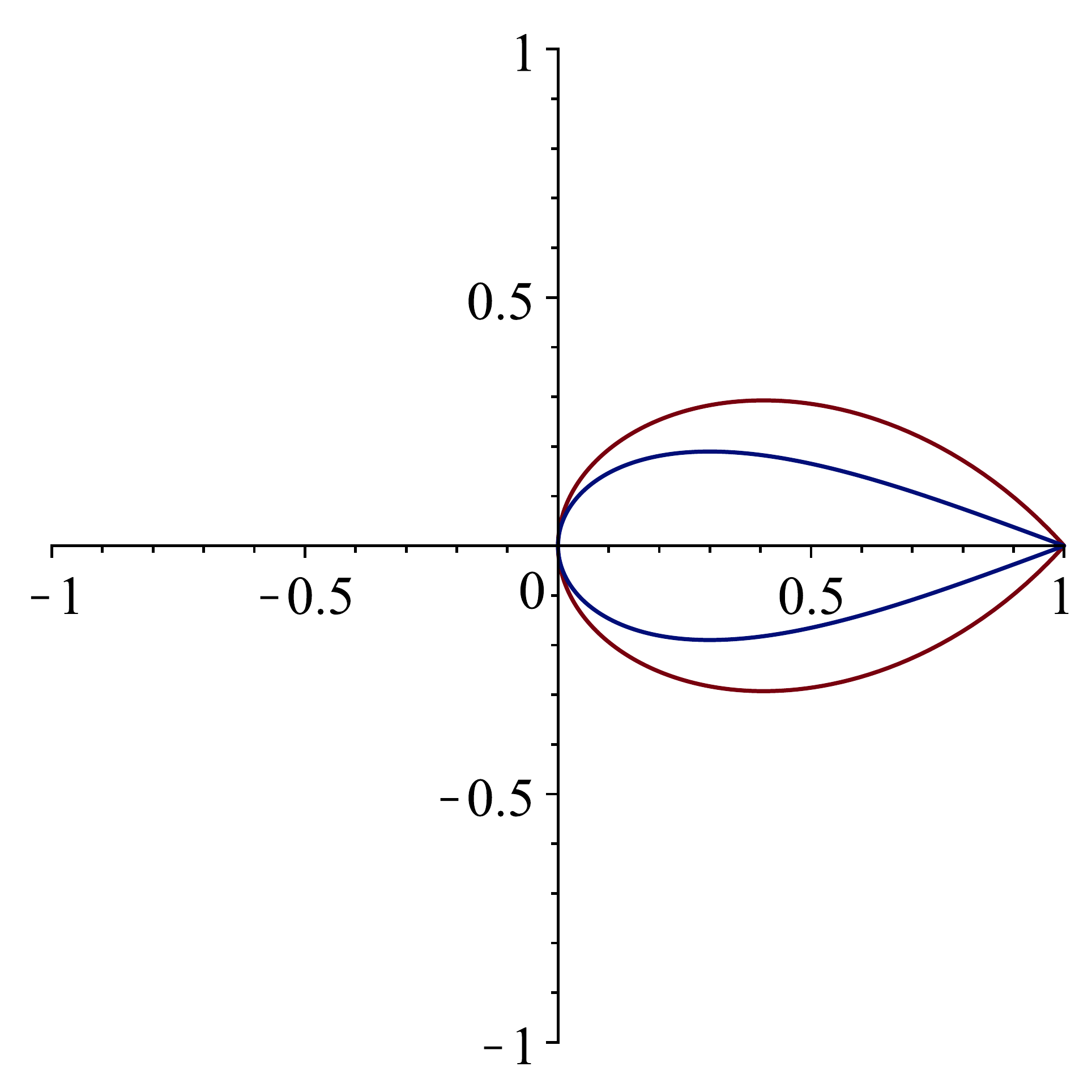}
	\includegraphics[scale=0.2]{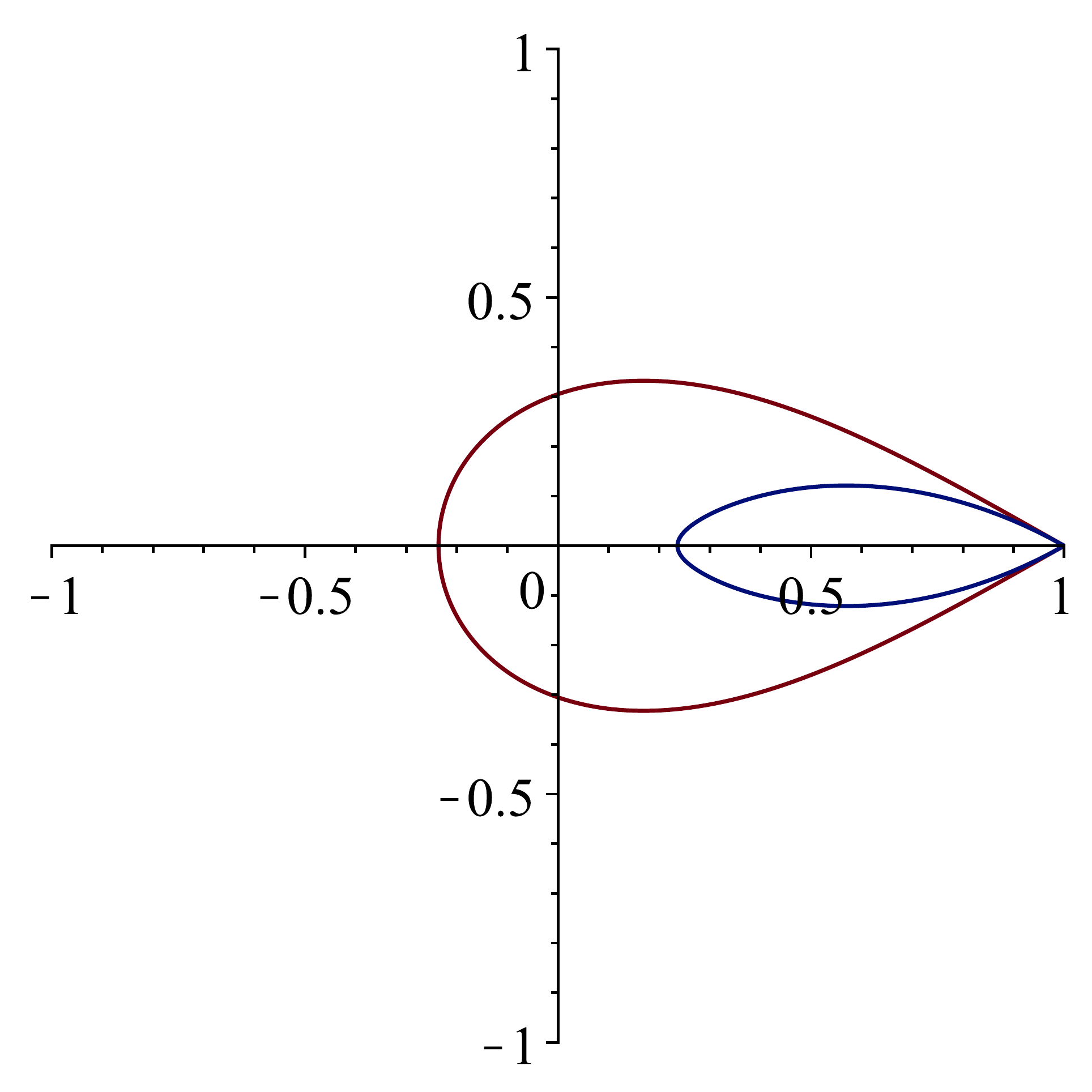}
	\includegraphics[scale=0.2]{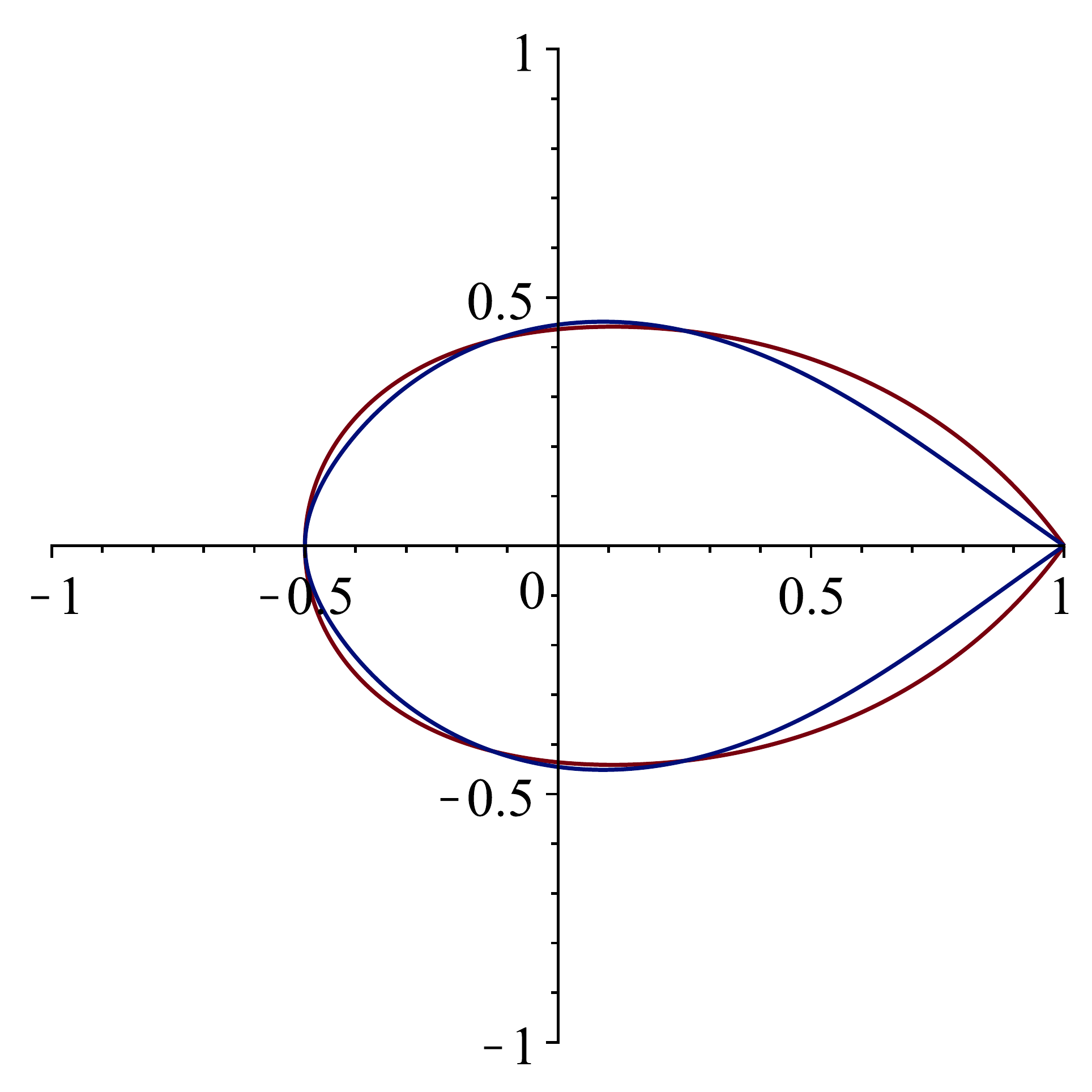}
	\caption{$\mathcal{S}_1$ (red) and $\mathcal{S}_2$ (blue) for some walks, from left to right: $p_{0,1}=p_{1,0}=p_{-1,1}=p_{0,-1}/2=1/5$; $p_{0,1}=p_{-1,0}=p_{1,-1}=1/3$; $p_{1,1}=p_{-1,1}=p_{0,-1}/2=1/4$. The right figure also shows that one of the curve does not necessarily lie inside the other.}
	\label{fig: S1-S2}
\end{figure}


\begin{lemma}\label{lem: S1-S2}
	Under Assumptions~\ref{assumption: H1}--\ref{assumption: H3} and \ref{assumption: monotonic}, the following assertions hold (see Figure~\ref{fig: S1-S2}):
	\begin{enumerate}[label=\textnormal{(\roman{*})},ref=\textnormal{(\roman{*})}]		
		\item\label{item: one-to-one} If $p_{1,1}=0$ (resp.~$p_{1,1}\neq 0$), then the mappings $s\mapsto \eta(s)s$ and $s \mapsto \eta(s)s^{-1}$ are two-to-one (resp.~one-to-one) from $\mathcal{C}$ onto $\mathcal{S}_1$ and $\mathcal{S}_2$;
		
		\item\label{item: S1-S2-symmetric} $\mathcal{S}_1$ and $\mathcal{S}_2$ are symmetric with respect to the real axis;
		
		\item\label{item: direction-S1-S2} If $s$ traverses $\mathcal{C}$ counterclockwise, then $\eta(s)s$ traverses $\mathcal{S}_1$ counterclockwise and $\eta(s)s^{-1}$ traverses $\mathcal{S}_2$ clockwise;
		
		\item\label{item: position-0-S1-S2} The relative position of $0$ with respect to $\mathcal{S}_1$ and $\mathcal{S}_2$ varies in different cases:
		\begin{enumerate}
			\item\label{item: position-of-0-p11=0-p01=p10} If $p_{1,1}=0$, $p_{1,0}=p_{0,1}$, then $0\in\mathcal{S}_1,\,\mathcal{S}_2$;
			
			\item\label{item: position-of-0-p11=0-p01not=p10} If $p_{1,1}=0$, $p_{0,1}>p_{1,0}$ (resp.~$p_{0,1}<p_{1,0}$), then $0\in\mathcal{S}_1^+,\,\mathcal{S}_2^-$ (resp.~$0\in\mathcal{S}_1^-,\,\mathcal{S}_2^+$);
			
			\item\label{item: position-of-0-p11not=0} If $p_{1,1}\neq 0$, then $0\in\mathcal{S}_1^+,\,\mathcal{S}_2^+$.
		\end{enumerate}
	\end{enumerate}
\end{lemma}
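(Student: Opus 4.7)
The plan is to address the four items of the lemma in order, using the symmetry relations for $\eta(s)$ from Lemma~\ref{lem: solutions-mathcal-K} together with Assumption~\ref{assumption: monotonic}. For item~\ref{item: one-to-one} in the case $p_{1,1}=0$, the identity $\eta(-s)=-\eta(s)$ immediately gives $\eta(-s)(-s)=\eta(s)s$ and $\eta(-s)(-s)^{-1}=\eta(s)s^{-1}$, so each point of $\mathcal{S}_1$ and $\mathcal{S}_2$ has at least the two preimages $\{s,-s\}$; the non-self-intersection clause of \ref{assumption: monotonic} rules out further identifications, yielding exactly two-to-one. In the case $p_{1,1}\neq 0$, one-to-oneness is precisely the non-self-intersection clause of \ref{assumption: monotonic}. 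For item~\ref{item: S1-S2-symmetric}, both parts of Lemma~\ref{lem: solutions-mathcal-K} yield $\eta(\overline{s})=\overline{\eta(s)}$, so $\overline{\eta(s)s}=\eta(\overline{s})\overline{s}\in\mathcal{S}_1$ (and analogously for $\mathcal{S}_2$).

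For item~\ref{item: direction-S1-S2}, Assumption~\ref{assumption: monotonic} already guarantees opposite orientations. To identify which curve is counterclockwise, I would argue by continuity from the symmetric case of \cite{HoRaTa-20}: there $\mathcal{S}_1=\mathcal{S}_2$ and $X(y)=\overline{y}$, so conjugation pairs the two parameterisations and forces $\eta(s)s$ to be counterclockwise while $\eta(s)s^{-1}$ is clockwise (verifiable, for instance, at a topmost point of $\mathcal{S}_1$, where the CCW tangent must point leftwards). Since the assignment of orientations is a discrete invariant, it is preserved under any continuous deformation of $\{p_{k,\ell}\}$ maintaining \ref{assumption: monotonic}, and hence persists to the non-symmetric setting.

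Item~\ref{item: position-0-S1-S2} splits into three subcases. In (a), the formula $\widetilde{K}(0,s)=-p_{1,0}s-p_{0,1}s^{-1}$ from just after \eqref{eq: K-tilde} has roots $s=\pm i$ on $\mathcal{C}$, so $\eta(\pm i)=0$, giving directly $0\in\mathcal{S}_1\cap\mathcal{S}_2$. In (c), $K(0,0)=-p_{1,1}\neq 0$ shows $(0,0)\notin\mathcal{K}$ and hence $0\notin\mathcal{S}_1\cup\mathcal{S}_2$; the interior positioning then follows from continuous deformation to the symmetric case of \cite{HoRaTa-20}, where $0\in\mathcal{S}_1^+\cap\mathcal{S}_2^+$ is known, since ``$0$ is interior to $\mathcal{S}_j$'' is preserved so long as $\mathcal{S}_j$ stays away from $0$ (ensured by the continuity of $K(0,0)$ throughout the deformation).

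The main obstacle is case (b), where $\eta(s)$ is nonvanishing on $\mathcal{C}$ but only implicitly defined. Here I would compute the winding numbers of $\mathcal{S}_1$ and $\mathcal{S}_2$ around $0$ via
\begin{equation*}
\frac{1}{2\pi i}\oint_{\mathcal{C}}\frac{d(\eta(s)s^{\pm 1})}{\eta(s)s^{\pm 1}}=W(\eta)\pm 1,
\end{equation*}
where $W(\eta)$ is the winding of $\eta$ around $0$; the two-to-one covering from item~\ref{item: one-to-one} then converts this quantity into the Jordan-curve winding. The sign of $W(\eta)$ will be determined by a Rouch\'e-type argument tracking the zeros of $\widetilde{K}(0,s)$ at $s=\pm\sqrt{-p_{0,1}/p_{1,0}}$: they lie inside $\mathcal{C}$ when $p_{0,1}<p_{1,0}$ and outside when $p_{0,1}>p_{1,0}$, and continuous deformation from the critical case $p_{0,1}=p_{1,0}$ (where both zeros meet $\mathcal{C}$) transfers this sign information to the implicitly defined $\eta$, giving $W(\eta)=+1$ when $p_{0,1}>p_{1,0}$ (so $0\in\mathcal{S}_1^+$ and $0\in\mathcal{S}_2^-$) and $W(\eta)=-1$ otherwise.
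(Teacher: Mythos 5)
Items \ref{item: one-to-one} and \ref{item: S1-S2-symmetric} of your proposal are correct and essentially coincide with the paper's argument (the symmetry/periodicity relations of $\eta$ from Lemma~\ref{lem: solutions-mathcal-K} plus the non-self-intersection clause of \ref{assumption: monotonic}). The difficulties are in items \ref{item: direction-S1-S2} and \ref{item: position-0-S1-S2}, where you replace direct computation by deformation arguments. For \ref{item: direction-S1-S2} and \ref{item: position-of-0-p11not=0} you appeal to ``continuous deformation of $\{p_{k,\ell}\}$ maintaining \ref{assumption: monotonic}'' down to a symmetric model. Nothing guarantees such a path exists: \ref{assumption: monotonic} is a nontrivial condition that fails for explicit models (see Figure~\ref{fig: S1-S2-not-satisfy-K1}), so the admissible set of weights need not be path-connected; and even granting a path, you would still have to prove that $s\mapsto\eta(s)$ (hence the curves, their Jordan property, their orientations and interiors) varies continuously along it. None of this is addressed, and it is precisely what the paper avoids: for \ref{item: position-of-0-p11not=0} it simply observes that $K(-1,-1)>0>K(0,0)=-p_{1,1}$ forces a root of $K(x,x)$ in $(-1,0)$, i.e.\ a common negative real point of $\mathcal{S}_1$ and $\mathcal{S}_2$; since by symmetry and injectivity of the parameterisation the only real points of each curve are $1$ and that negative point, each interior meets $\mathbb{R}$ in an interval containing $0$.

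The same issue is fatal in your treatment of \ref{item: position-of-0-p11=0-p01not=p10}. The winding-number identity you write is correct, but the entire content lies in determining $W(\eta)$, and your proposed determination does not go through: you want to ``transfer'' the sign by deforming through the critical case $p_{0,1}=p_{1,0}$, but there $\eta(\pm i)=0$, so $W(\eta)$ is undefined exactly at the case you deform through, and the asserted link between the position of the zeros of $\widetilde{K}(0,\cdot)$ relative to $\mathcal{C}$ and the winding of the implicitly defined branch $\eta$ is never established. The paper's route is elementary and complete and you should adopt it: write $K(x,-x)=-x\widehat{K}(x)$ with $\widehat{K}$ as in \eqref{eq: K-hat}; then $\widehat{K}'>0$ on $(-1,1)$, $\widehat{K}(-1)<0<\widehat{K}(1)$ and $\widehat{K}(0)=p_{0,1}-p_{1,0}$, so for $p_{0,1}>p_{1,0}$ the unique point of $\mathcal{S}_1\setminus\{1\}$ on $\mathbb{R}$ is negative while that of $\mathcal{S}_2\setminus\{1\}$ is positive, giving $0\in\mathcal{S}_1^+$ and $0\in\mathcal{S}_2^-$ (and the cases \ref{item: position-of-0-p11=0-p01=p10}, \ref{item: position-of-0-p11not=0} follow from the same computation, uniformly). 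Your argument for \ref{item: position-of-0-p11=0-p01=p10} via $\eta(\pm i)=0$ is fine as it stands.
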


\begin{proof}
	By Lemma~\ref{lem: solutions-mathcal-K}, $\eta(e^{i\phi})e^{i\phi}$ and $\eta(e^{i\phi})e^{-i\phi}$ are periodic functions of $\phi$ with period $\pi$ in the case $p_{1,1}=0$ or period $2\pi$ in the case $p_{1,1}\not=0$. Since $\mathcal{S}_1$ and $\mathcal{S}_2$ are non-self-intersecting (by Assumption~\ref{assumption: monotonic}), then these functions are one-to-one mappings from $[0,\pi)$ (resp.~$[0,2\pi)$) onto $\mathcal{S}_1$ and $\mathcal{S}_2$ in the case $p_{1,1}=0$ (resp.~$p_{1,1}\not=0$), which also implies Item~\ref{item: one-to-one}. Further, recall that for all $\vert s\vert=1$, $\eta(s)=-\overline{\eta(-\overline{s})}$ if $p_{1,1}=0$, whereas $\eta(s)=\overline{\eta(\overline{s})}$ if $p_{1,1}\not=0$. This infers Item~\ref{item: S1-S2-symmetric}. Item~\ref{item: direction-S1-S2} follows easily from Assumption~\ref{assumption: monotonic}.
	
	We now prove Item~\ref{item: position-0-S1-S2} by studying the intersection of $\mathcal{S}_1\setminus\{1\}$ (or $\mathcal{S}_2\setminus\{1\}$) and $\mathbb{R}$. In the case $p_{1,1}=0$, this leads us to study the roots of $K(x,-x)$ in $[-1,1]$. One can write $K(x,-x) = -x\widehat{K}(x)$, where $\widehat{K}$ is the series
	\begin{equation}\label{eq: K-hat}
		\widehat{K}(x)=x +\sum p_{k,\ell}(-1)^{-\ell+1}x^{-k-\ell+1}.
	\end{equation}
	Notice that $\widehat{K}'(x)>0$ on $(-1,1)$, and by \ref{assumption: H2} one has $\widehat{K}(-1)=-1+\sum p_{k,\ell}(-1)^{-k}<0$, $\widehat{K}(0)=p_{0,1}-p_{1,0}$, $\widehat{K}(1) = 1 + \sum p_{k,\ell}(-1)^{-\ell+1}>0$. Hence, if $p_{0,1}=p_{1,0}$, then $0$ is the unique root of $\widehat{K}(x)$ on $[-1,1]$. Thus $\mathcal{S}_1$ and $\mathcal{S}_2$ pass through $0$, which implies Item~\ref{item: position-of-0-p11=0-p01=p10}; If $p_{0,1}>p_{1,0}$, then $\widehat{K}(x)$ has a root in $(-1,0)$, i.e., $\mathcal{S}_1$ (resp.~$\mathcal{S}_2$) passes through a negative (resp.~positive) point; If $p_{0,1}<p_{1,0}$, then $\widehat{K}(x)$ has a root in $(0,1)$, i.e., $\mathcal{S}_1$ (resp.~$\mathcal{S}_2$) passes through a positive (resp.~negative) point. This implies Item~\ref{item: position-of-0-p11=0-p01not=p10}.
	
	We move to the case $p_{1,1}\not =0$ and study the root of $K(x,x)$ in $[-1,0]$. Since $K(-1,-1)=1-\sum p_{k,\ell}(-1)^{-k-\ell}>0$ and $K(0,0) = -p_{1,1}<0$, then $K(x,x)$ has a root in $(-1,0)$, and $\mathcal{S}_1$ and $\mathcal{S}_2$ pass through a same negative point, which implies Item~\ref{item: position-of-0-p11not=0}.
\end{proof}


The next lemma concerns the smoothness of $\mathcal{S}_1$ and $\mathcal{S}_2$, particularly their shapes at $1$. Let us denote by $V[K]$ the zero set of the kernel on the closed bidisk:
\begin{equation*}
	V[K] := \{(x,y)\in\mathbb{C}^2:K(x,y)=0,\vert x\vert\leq 1,\vert y\vert \leq 1\}.
\end{equation*}
Let us recall some key definitions from complex analysis. Firstly, a curve parametrized by a function $z:[a,b]\to\mathbb{C}$ is smooth if $z' (t)$ exists, is continuous and does not vanish on $[a,b]$. Secondly, a singular point of $V[K]$ is defined as a solution of the system $\partial_x K(x,y) = \partial_y K(x,y) = K(x,y) = 0$. Otherwise, any point of $V[K]$ is called non-singular or regular. Finally, the angle at a corner point formed by the two tangents will be simply referred to as the angle (of the curve) at that point.

\begin{lemma}\label{lem: S1-S2-smoothness-angle-at-1}
	Under Assumptions~\ref{assumption: H1}--\ref{assumption: H3} and \ref{assumption: monotonic}, the curves $\mathcal{S}_1$ and $\mathcal{S}_2$ are smooth everywhere except at $1$, where they admit corner points (see Figure~\ref{fig: S1-S2}) with the angles
	\begin{equation}\label{eq: theta_1 theta_2}
		\theta_1 = \arccos \frac{AC-A^2-2B^2+2AB}{A(A-2B+C)}\quad\text{and}\quad
		\theta_2 = \arccos \frac{AC-A^2-2B^2-2AB}{A(A+2B+C)},
	\end{equation}
	where $A = \sum p_{k,\ell}(k+\ell)^2$, $B = \sum p_{k,\ell}(k^2-\ell^2)$ and $C=\sum p_{k,\ell}(k-\ell)^2$.
	Moreover, $\mathcal{K}\setminus\{(1,1)\}$ consists of non-singular points of $V[K]$.
\end{lemma}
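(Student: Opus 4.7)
The plan is to reduce the claim to three sub-tasks: smoothness of $\mathcal{S}_1$ and $\mathcal{S}_2$ away from $1$, identification of $(1,1)$ as the only singular point of $V[K]$ on $\mathcal{K}$, and the explicit angle computation at $1$. For the first, Assumption~\ref{assumption: monotonic} directly supplies non-vanishing derivatives of $s\mapsto\eta(s)s$ and $s\mapsto\eta(s)s^{-1}$ for every $s\in\mathcal{C}$ except $s=1$ (and also $s=-1$ in the case $p_{1,1}=0$, where however $\eta(-1)=-1$ so that the image point remains $(1,1)$). Hence $\mathcal{S}_1$ and $\mathcal{S}_2$ are smooth away from the point $1\in\mathbb{C}$.

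For the singularity of the kernel's zero set at $(1,1)$, I would first compute directly $K(1,1)=1-\sum p_{k,\ell}=0$, $\partial_xK(1,1)=\sum k\,p_{k,\ell}=0$ and $\partial_yK(1,1)=\sum \ell\,p_{k,\ell}=0$ by \ref{assumption: H3}, showing that $(1,1)\in V[K]$ is singular. Conversely, if $(x_0,y_0)\in\mathcal{K}\setminus\{(1,1)\}$ were singular, then differentiating the identity $K(\eta(s)s,\eta(s)s^{-1})=0$ twice at the corresponding parameter $s_0$ would force the tangent vector of $\mathcal{K}$ at $s_0$ to lie in the kernel of the Hessian quadratic form of $K$ at $(x_0,y_0)$. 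Since this tangent is non-zero by Assumption~\ref{assumption: monotonic}, one then uses the explicit structure of $K$ to exclude such a coincidence outside $(1,1)$.

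The core computation is the corner angle at $1$. Expanding $K$ to second order at $(1,1)$, with $\sigma_1^2=\sum k^2p_{k,\ell}$, $\sigma_2^2=\sum \ell^2p_{k,\ell}$, $c=\sum k\ell\,p_{k,\ell}$, one finds $K_{xx}(1,1)=-\sigma_1^2$, $K_{yy}(1,1)=-\sigma_2^2$, $K_{xy}(1,1)=-c$, and consequently
\begin{equation*}
K(1+u,1+v)=-\tfrac{1}{2}\bigl(\sigma_1^2 u^2+2c\,uv+\sigma_2^2 v^2\bigr)+O(|u|^3+|v|^3).
\end{equation*}
Parametrizing $\mathcal{K}$ near $(1,1)$ by $s=e^{i\phi}$ and writing the one-sided Taylor expansion $\eta(e^{i\phi})=1+a\phi+o(\phi)$ (with possibly distinct $a$ as $\phi\to 0^+$ and $\phi\to 0^-$, reflecting the two branches of $V[K]$ meeting at the node), we get $u=(a+i)\phi+O(\phi^2)$ and $v=(a-i)\phi+O(\phi^2)$. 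Substituting into the quadratic form and isolating the coefficient of $\phi^2$ yields
\begin{equation*}
A\,a^2+2iB\,a-C=0,\qquad\text{so}\qquad a_\pm=\frac{-iB\pm\sqrt{AC-B^2}}{A},
\end{equation*}
via the identities $A=\sigma_1^2+\sigma_2^2+2c$, $B=\sigma_1^2-\sigma_2^2$, $C=\sigma_1^2+\sigma_2^2-2c$ and $AC-B^2=4(\sigma_1^2\sigma_2^2-c^2)>0$ by Cauchy--Schwarz under the implicit non-degeneracy of the walk. The two roots $a_\pm$ therefore provide the two distinct tangent vectors $a_\pm+i$ to $\mathcal{S}_1$ and $a_\pm-i$ to $\mathcal{S}_2$ at the point $1$. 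A direct calculation gives $|a_\pm+i|^2=(A+C-2B)/A$ and, viewing complex numbers as vectors in $\mathbb{R}^2$, $(a_++i)\cdot(a_-+i)=(A^2-2AB+2B^2-AC)/A^2$. The interior corner angle being $\pi$ minus the angle between the two forward tangents, $\cos\theta_1=-((a_++i)\cdot(a_-+i))/(|a_++i|\,|a_-+i|)$ simplifies to the stated formula; the analogous computation for $\mathcal{S}_2$, using $|a_\pm-i|^2=(A+C+2B)/A$, produces $\theta_2$.

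The main obstacle I anticipate is twofold: rigorously ruling out singular points of $V[K]$ on $\mathcal{K}\setminus\{(1,1)\}$ (the second-order condition does not automatically contradict a non-vanishing tangent, and one must use the specific form of $K$ together with Assumption~\ref{assumption: monotonic}); and keeping track of the sign in $\cos\theta_1,\cos\theta_2$, which comes from the convention that the \emph{corner angle} is the interior angle, i.e.\ $\pi$ minus the angle between forward-pointing one-sided tangents.
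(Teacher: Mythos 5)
Your angle computation at $1$ is correct and essentially equivalent to the paper's: the paper works with $\eta'(s)$ (derivative in $s$ rather than in $\phi$ with $s=e^{i\phi}$), obtains the same quadratic $AX^2+2BX+C=0$, and extracts the angle via the half-angle identity $\cos\theta=2\cos^2(\theta/2)-1$ rather than via your dot-product of the two tangents; both routes give \eqref{eq: theta_1 theta_2}, and your sign convention (interior angle $=\pi$ minus the angle between the two forward tangents) is the right one. However, there are two genuine gaps elsewhere.

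First, for smoothness away from $1$ you assert that Assumption~\ref{assumption: monotonic} ``directly supplies'' non-vanishing derivatives of $s\mapsto\eta(s)s$ and $s\mapsto\eta(s)s^{-1}$. But \ref{assumption: monotonic} only asserts non-vanishing derivatives \emph{given that the derivatives exist}; the existence of $\eta'(s)$ must be proved. The paper does this by showing the root $\eta(s)$ is simple, i.e.\ $\partial_\eta\widetilde K(\eta(s),s)\neq0$ (case $p_{1,1}=0$, uniqueness of the root in $\mathcal{C}^+$ from Lemma~\ref{lem: solutions-mathcal-K}) or $\partial_\eta K(\eta(s)s,\eta(s)s^{-1})\neq0$ (case $p_{1,1}\neq0$, where one must first rule out $\eta(s_0)=\eta_2(s_0)$, which would make $\mathcal{S}_1$ self-intersecting and contradict \ref{assumption: monotonic}); only then does the implicit function theorem give \eqref{eq: eta'(s)}. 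This step is not optional, and it is also the key input for the second point below.

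Second, your treatment of the non-singularity of $\mathcal{K}\setminus\{(1,1)\}$ is not a proof: you propose a second-order argument via the Hessian of $K$ and explicitly leave the decisive exclusion step unresolved. The paper's argument is first-order and much simpler: having established $\partial_\eta K\neq0$ at every $(\eta(s)s,\eta(s)s^{-1})$ with $s\neq1$, the chain rule $\partial_\eta K=s\,\partial_xK+s^{-1}\partial_yK$ immediately forces one of $\partial_xK,\partial_yK$ to be non-zero there. You also overlook the point $(0,0)$, which belongs to $\mathcal{K}$ when $p_{1,1}=0$ (as an isolated point if $p_{1,0}\neq p_{0,1}$, and as a point of the curve reached at $s=\pm i$ if $p_{1,0}=p_{0,1}$, where in fact $\partial_\eta K$ \emph{does} vanish because of the factorization $K=\eta\widetilde K$); this point must be handled separately by the direct computation $\partial_xK(0,0)=-p_{0,1}$, $\partial_yK(0,0)=-p_{1,0}$, which cannot both vanish by \ref{assumption: H2}. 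As written, your proposal does not establish the non-singularity assertion of the lemma.
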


\begin{proof}
	We first study the differentiability of $\eta(s)$. By Assumption~\ref{assumption: H3}, $\partial_x K$ and $\partial_y K$ are well defined for all $\vert x\vert, \vert y\vert < 1$, then $\partial_\eta K(\eta s,\eta s^{-1})$ and $\partial_s K(\eta s,\eta s^{-1})$ are also well defined for all $\vert\eta\vert< 1$, $\vert s\vert =1$.
	
	In the case $p_{1,1}=0$, recall that $K(\eta s,\eta s^{-1}) = \eta\widetilde{K}(\eta,s)$, where $\widetilde{K}$ is defined in \eqref{eq: K-tilde}. By Lemma~\ref{lem: solutions-mathcal-K}, for any $s_0\in\mathcal{C}\setminus\{\pm 1\}$, $\eta(s_0)$ is the unique root of $\widetilde{K}(\eta,s_0)$ in $\mathcal{C}^+$, i.e., there exists a continuous function $u(\eta,s)$ defined in a neighborhood of $(\eta(s_0),s_0)$ such that $\widetilde{K}(\eta,s) = (\eta-\eta(s_0))u(\eta,s)$ and $u(\eta(s),s)\not=0$. This is equivalent to $\partial_\eta \widetilde{K}(\eta(s_0),s_0)=u(\eta(s_0),s_0)\not= 0$. Thus, $\eta'(s)$ is well defined for all $s\in\mathcal{C}\setminus\{\pm 1\}$ under the form
	\begin{equation}\label{eq: eta'(s)}
		\eta'(s)=-\frac{\partial_s\widetilde{K}(\eta(s),s)}{\partial_\eta\widetilde{K}(\eta(s),s)},
	\end{equation}
	which is obtained by differentiating the identity $\widetilde{K}(\eta(s),s)=0$. Since $(\eta(s)s)'$ and $(\eta(s)s^{-1})'$ are non-vanishing on $\mathcal{C}\setminus\{\pm 1\}$ (by Assumption~\ref{assumption: monotonic}), then $\mathcal{S}_1$ and $\mathcal{S}_2$ are smooth everywhere except at $1$.
	
	In the case $p_{1,1}\not= 0$, notice that for all $s\in\mathcal{C}$, $\eta(s)$ and $\eta_2(s)$ are distinct roots of $K(\eta s,\eta s^{-1})$. Indeed, if there exists $s_0\in\mathcal{C}$ such that $\eta(s_0)=\eta_2(s_0)$, then Lemma~\ref{lem: solutions-mathcal-K} implies $\eta(s_0)s_0 = \eta(-s_0)(-s_0)$, i.e., $\mathcal{S}_1$ is self-intersecting, which contradicts Assumption~\ref{assumption: monotonic}. Hence, for any $s\in\mathcal{C}\setminus\{1\}$, $\eta(s)$ is the unique root of $K(\eta s,\eta s^{-1})$ in a neighborhood of $\eta(s)$, which is equivalent to $\partial\eta K(\eta(s)s,\eta(s)s^{-1})\not= 0$ by similar arguments as in the case $p_{1,1}=0$. Thus, $\eta' (s)$ is well defined for all $s\in\mathcal{C}\setminus\{1\}$. By Assumption~\ref{assumption: monotonic}, then $\mathcal{S}_1$ and $\mathcal{S}_2$ are smooth everywhere except at $1$.
	
	We now prove that $\mathcal{K}\setminus\{(1,1)\}$ consists of non-singular points of $V[K]$. Consider first the point $(0,0)\in\mathcal{K}$, which can only happen in the case $p_{1,1}=0$. Since $\partial_x K(0,0)$ and $\partial_y K(0,0)$, which are respectively equal to $p_{0,1}$ and $p_{1,0}$, cannot simultaneously vanish, then $(0,0)$ is a non-singular point of $V[K]$. Now move to the other points of $\mathcal{K}\setminus\{(1,1)\}$ and let $(\eta,s)$ be a root of $K(\eta s,\eta s^{-1})$  such that $\eta\in\overline{\mathcal{C}}\setminus\{0\}$ and $s\in\mathcal{C}$, then as shown in the beginning of the proof, $\partial_\eta K(\eta,s)\neq 0$. Since
	\begin{equation*}
		\partial_\eta K = \partial_\eta x \partial_x K + \partial_\eta y \partial_y K = s \partial_x K + s^{-1} \partial_y K,
	\end{equation*}
	we must have either $\partial_x K(\eta,s)$ or $\partial_y K(\eta,s)$ non-zero. The claim then follows.
	
	We now investigate the shapes of $\mathcal{S}_1$ and $\mathcal{S}_2$ at $1$. In the case $p_{1,1}=0$, notice that one cannot evaluate the limit of $\eta'(s)$ as $s\to 1$ directly from \eqref{eq: eta'(s)} since the quotient is undefined at $1$. We then differentiate the identity $\widetilde{K}(\eta(s),s)=0$ twice and evaluate its limit as $s\to 1$. The limit $\lim_{s\to 1}\eta'(s)$ satisfies the equation
	\begin{equation}\label{eq: eqn of eta'(1)}
		AX^2 +2BX+ C = 0,
	\end{equation}
	where $A$, $B$ and $C$ are defined in Lemma \ref{lem: S1-S2-smoothness-angle-at-1} and are finite by Assumption~\ref{assumption: H3.2}. Since the above equation has two distinct roots, which indicates that $\eta(s)$ is semi-differentiable at $s=1$ and admits left and right derivatives	
	\begin{equation*}
		\partial_\pm\eta(1):= \lim_{\substack{s= \exp(i\phi)\\\phi\to 0^\pm}}\frac{\eta(s)-1}{s-1}\in \left\{\frac{-B\pm{i}\sqrt{AC-B^2}}{A}\right\}.
	\end{equation*}
	In order to know which root of \eqref{eq: eqn of eta'(1)} corresponds to $\partial_+\eta(1)$, we take a look at the following form:
	\begin{equation*}
		\partial_+(\eta(s)s)\big|_{s=1} = \partial_+\eta(1) + \eta(1) \in \left\{ \frac{A-B\pm{i}\sqrt{AC-B^2}}{A} \right\}.
	\end{equation*}
	Since $\arg \partial_+(\eta(s)s)\big|_{s=1}$ indicates the oriented angle between the tangent of $\mathcal{S}_1$ and $\mathcal{C}$ at $1$, which is non-negative and smaller than or equal to $\pi/2$ by Assumption~\ref{assumption: monotonic}, then
	\begin{equation*}
		\partial_+(\eta(s)s)\big|_{s=1} = \frac{A-B+{i}\sqrt{AC-B^2}}{A}\quad\text{and}\quad \partial_-(\eta(s)s)\big|_{s=1} = \frac{A-B-{i}\sqrt{AC-B^2}}{A}.
	\end{equation*}
	The angle $\theta_1$ of $\mathcal{S}_1$ at $1$ is implied from
	\begin{equation*}
		\cos\theta_1 = 2\cos^2\left(\frac{\theta_1}{2}\right) -1 = 2\frac{AC-B^2}{(A-B)^2+(AC-B^2)}-1.
	\end{equation*}
		Similarly, we have:
	\begin{align*}
		&\partial_\pm(\eta(s)s^{-1})\big|_{s=1} = \frac{-A-B\pm{i}\sqrt{AC-B^2}}{A},\\
		&\cos\theta_2 = 2\cos^2\left(\frac{\theta_2}{2}\right) -1 = 2\frac{AC-B^2}{(A+B)^2+(AC-B^2)}-1.
	\end{align*}
	In the case $p_{1,1}\not= 0$, by differentiating the identity $K(\eta(s)s,\eta(s)s^{-1})=0$ twice and evaluating its limit as $s\to 1$, we also obtain Eq.~\eqref{eq: eqn of eta'(1)}. The rest of the proof then follows the case $p_{1,1}=0$.
\end{proof}


The following remark shows the relation between the angles $\theta_1$, $\theta_2$ in Lemma~\ref{lem: S1-S2-smoothness-angle-at-1} and $\theta$ introduced in \eqref{eq: theta}. We also show how the weights $\{p_{k,\ell}\}_{k,\ell}$ impact upon $\theta_1$ and $\theta_2$.

\begin{remark}\label{rmk: angles theta1, theta2}
	The angle $\theta$ defined in \eqref{eq: theta} is the arithmetic mean of $\theta_1$ and $\theta_2$ in \eqref{eq: theta_1 theta_2}. Further, under Assumption~\ref{assumption: monotonic}, we have the following trichotomy:
	\begin{enumerate}[label=\textnormal{(\roman{*})},ref=\textnormal{(\roman{*})}]
		\item $\theta_1,\,\theta_2\not= 0$ (see Figure~\ref{fig: S1-S2}) if
		\begin{equation*}
			\begin{cases}
				-\sum p_{k,\ell}k\ell < \sum p_{k,\ell}k^2,\\
				-\sum p_{k,\ell}k\ell < \sum p_{k,\ell}\ell^2;
			\end{cases}
		\end{equation*}
		
		\item $\theta_1\not= 0$, $\theta_2=0$ (i.e., $\mathcal{S}_2$ has a cusp at $1$, see Figure~\ref{fig: theta_2 = 0}) if
		\begin{equation*}
			\sum p_{k,\ell}k^2 = -\sum p_{k,\ell}k\ell <\sum p_{k,\ell}\ell^2;
		\end{equation*}
		
		\item $\theta_1=0$ (i.e., $\mathcal{S}_1$ has a cusp at $1$), $\theta_2\not= 0$ if
		\begin{equation*}
			\sum p_{k,\ell}\ell^2 = -\sum p_{k,\ell}k\ell <\sum p_{k,\ell}k^2.
		\end{equation*}
	\end{enumerate}
	The above statements may not hold true if the model does not satisfy Assumption~\ref{assumption: monotonic}. For example, the model $p_{-1,0}=p_{1,-3}=p_{0,1}/3=1/5$ (see Figure~\ref{fig: S1-S2-not-satisfy-K1}) does not satisfy any of the statements.
\end{remark}

\begin{proof}
	We have:
	\begin{align*}
		\cos\frac{\theta_1+\theta_2}{2} &= \cos\frac{\theta_1}{2} \cos\frac{\theta_2}{2} - \sin\frac{\theta_1}{2} \sin\frac{\theta_2}{2}\\
		&=\frac{(AC-B^2)-(A-B)(A+B)}{\sqrt{(A^2-2AB+AC)(A^2+2AB+AC)}}\\
		&=\frac{C-A}{\sqrt{(A-2B+C)(A+2B+C)}}=\cos \theta,
	\end{align*}
	which implies that $\theta=(\theta_1+\theta_2)/2$.
	
	Moving to the proof of the trichotomy, we first recall from Lemma~\ref{lem: S1-S2}\ref{item: direction-S1-S2} that $\eta(s)s$ and $\eta(s)s^{-1}$ move respectively clockwise and counterclockwise as $s$ moves clockwise on $\mathcal{C}$ under Assumption~\ref{assumption: monotonic}. This implies that
	\begin{equation*}
		\Re \left( \partial_+(\eta(s)s)\big|_{s=1} \right) \geq 0 \quad\text{and}=\frac{A-B}{A}\quad \Re \left( \partial_+(\eta(s)s^{-1})\big|_{s=1} \right)\frac{-A-B}{A} \leq 0,
	\end{equation*}
	where $\Re$ denotes the real part of a complex number. Hence, Assumption~\ref{assumption: monotonic} implies $-A\leq B\leq A$. The case $A=B=0$ cannot hold. Otherwise, we have: $\theta_1,\theta_2\not=0$ if $-A<B<A$; $\theta_1\not=0$, $\theta_2=0$ if $-A<B=A$; $\theta_1=0$, $\theta_2\not=0$ if $-A=B<A$.
\end{proof}

\begin{figure}[t]
	\centering
	\includegraphics[scale=0.75]{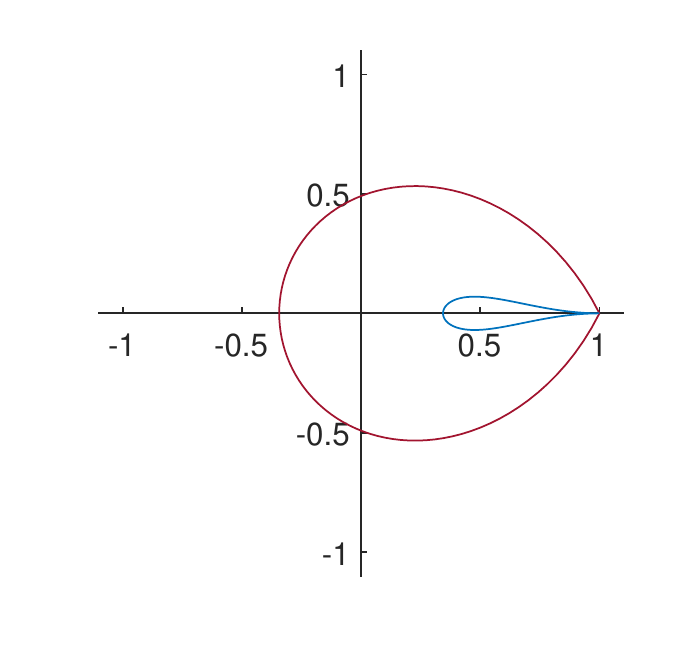}
	\caption{$\mathcal{S}_1$ (red) and $\mathcal{S}_2$ (blue) for the walk $p_{0,1}=1/2$, $p_{-1,0}=1/4$, $p_{1,0}=p_{1,-4}=1/8$. Such a model has $\sum p_{k,\ell}k^2 = -\sum p_{k,\ell}k\ell (=1/2)<\sum p_{k,\ell}\ell^2(=1)$, and thus the angle $\theta_2=0$.}
	\label{fig: theta_2 = 0}
\end{figure}

\subsection{Solutions of the kernel in a neighborhood of $\mathcal{K}$}\label{subsec: neighborhood of mathcal K}

We first properly define the functions $X$ and $Y$ mentioned in Proposition~\ref{prop: extend-solutions-intro}. Let $X:\mathcal{S}_2\to\mathcal{S}_1$ be the mapping $s\mapsto \eta(s)s$ composed with the compositional inverse of $s\mapsto\eta(s)s^{-1}$. Similarly, $Y:\mathcal{S}_1\to\mathcal{S}_2$ denotes the mapping $s\mapsto \eta(s)s^{-1}$ composed with the inverse of $s\mapsto\eta(s)s$. In other words, $Y:\mathcal{S}_1\to\mathcal{S}_2$ is the inverse of $X:\mathcal{S}_2\to\mathcal{S}_1$. The following lemma presents some crucial properties of these functions.

\begin{lemma}\label{lem: X(y)-Y(x)-on-S1-S2}
	Under Assumptions~\ref{assumption: H1}--\ref{assumption: H3} and \ref{assumption: monotonic}, we have:
	\begin{enumerate}[label=\textnormal{(\roman{*})},ref=\textnormal{(\roman{*})}]
		\item\label{item: X(y)-one-to-one} $X(y)$ is a one-to-one mapping from $\mathcal{S}_2$ onto $\mathcal{S}_1$;
		
		\item\label{item: X(y)-solution-kernel} For all $y\in\mathcal{S}_2$, $(X(y),y)\in\mathcal{K}$;
		
		\item\label{item: X(y)-derivative} $X(y)$ is admits non-vanishing derivatives on $\mathcal{S}_2\setminus\{1\}$.
		
		\item\label{item: X(y)-derivative-at-1} $X(y)$ admits non-vanishing left and right derivatives at $1$.
	\end{enumerate}
	We have analogous statements for $Y(x)$.
\end{lemma}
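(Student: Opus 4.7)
The plan is to extract everything from the preceding Lemma~\ref{lem: S1-S2}\ref{item: one-to-one} and Lemma~\ref{lem: S1-S2-smoothness-angle-at-1} by writing $X$ explicitly as $\sigma_1 \circ \sigma_2^{-1}$, where $\sigma_1(s) = \eta(s)s$ and $\sigma_2(s) = \eta(s)s^{-1}$, and then applying the chain rule. The main conceptual step is checking that this composition makes sense on the nose in the degenerate case $p_{1,1}=0$, where $\sigma_1$ and $\sigma_2$ are each two-to-one.

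For item \ref{item: X(y)-one-to-one}, I would begin by treating the case $p_{1,1}\neq 0$: Lemma~\ref{lem: S1-S2}\ref{item: one-to-one} gives that $\sigma_1$ and $\sigma_2$ are bijections $\mathcal{C}\to \mathcal{S}_1,\mathcal{S}_2$, so $X=\sigma_1\circ\sigma_2^{-1}$ is a bijection. In the case $p_{1,1}=0$, both $\sigma_1$ and $\sigma_2$ are two-to-one, with fibers exactly $\{s,-s\}$ by the symmetry $\eta(-s)=-\eta(s)$ from Lemma~\ref{lem: solutions-mathcal-K}\ref{item: solutions-kernel-p11=0}. Since $\eta(-s)(-s)=\eta(s)s$ and $\eta(-s)(-s)^{-1}=\eta(s)s^{-1}$, the two preimages of a given $y\in\mathcal{S}_2$ produce the same value of $\eta(s)s$, so $X$ is well-defined on $\mathcal{S}_2$; injectivity then follows from the same symmetry applied to $\sigma_1$. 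Item \ref{item: X(y)-solution-kernel} is immediate from the description \eqref{eq: mathcal K description} of $\mathcal{K}$.

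For items \ref{item: X(y)-derivative} and \ref{item: X(y)-derivative-at-1}, I would write
\begin{equation*}
X'(y) \;=\; \frac{\sigma_1'(s)}{\sigma_2'(s)}, \qquad s=\sigma_2^{-1}(y),
\end{equation*}
whenever the right-hand side is defined. Assumption~\ref{assumption: monotonic} guarantees that $\sigma_1'$ and $\sigma_2'$ are non-vanishing on $\mathcal{C}\setminus\{\pm 1\}$ if $p_{1,1}=0$, and on $\mathcal{C}\setminus\{1\}$ if $p_{1,1}\neq 0$. A direct inspection of which points of $\mathcal{C}$ are mapped to $y=1$ by $\sigma_2$ (namely $s=1$ if $p_{1,1}\neq 0$; $s=\pm 1$ if $p_{1,1}=0$, using $\eta(-1)=-1$ so that $\sigma_2(-1)=1$) shows that the excluded values of $s$ are exactly the preimages of $y=1$. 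Hence $X'$ is well-defined and non-vanishing on all of $\mathcal{S}_2\setminus\{1\}$. For item \ref{item: X(y)-derivative-at-1}, Lemma~\ref{lem: S1-S2-smoothness-angle-at-1} supplies non-vanishing one-sided derivatives $\partial_\pm \sigma_1(1)$ and $\partial_\pm \sigma_2(1)$ at the corner point, so the one-sided analog of the chain rule yields finite, non-vanishing left and right derivatives of $X$ at $y=1$. The analogous statements for $Y=X^{-1}$ follow by swapping the roles of $\sigma_1$ and $\sigma_2$ throughout.

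I expect the main subtlety to be not the computation itself but the careful bookkeeping in the case $p_{1,1}=0$: ensuring that $X$ is single-valued, that the fiber identification $\{s,-s\}$ remains consistent as $s$ varies, and that when writing $X$ locally as $\sigma_1\circ\sigma_2^{-1}$ near $y=1$ one selects the branch of $\sigma_2^{-1}$ compatible with the direction in which $y$ approaches~$1$. Once this branch issue is spelled out, the rest reduces to direct applications of results already established.
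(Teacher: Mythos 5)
Your proposal is correct and follows essentially the same route as the paper: the paper also realizes $X$ as the composition $(s\mapsto\eta(s)s)\circ(s\mapsto\eta(s)s^{-1})^{-1}$, handles the case $p_{1,1}=0$ by restricting to the half-circle (which is equivalent to your fiber identification $\{s,-s\}$), and deduces items (iii) and (iv) from the non-vanishing (one-sided) derivatives of the parametrizations supplied by Lemma~\ref{lem: S1-S2-smoothness-angle-at-1} via the chain rule. Your explicit check that the excluded parameters $s=\pm1$ are exactly the preimages of $y=1$ is a detail the paper leaves implicit, but it is the same argument.
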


\begin{proof}
	Since $s\mapsto \eta(s)s$ and $s\mapsto \eta(s)s^{-1}$ are one-to-one mappings from the half-circle $\{{e}^{{i}\phi}:\phi\in[0,\pi)\}$ (resp.~$\mathcal{C}$) onto $\mathcal{S}_1$ and $\mathcal{S}_2$ in the case $p_{1,1}=0$ (resp.~$p_{1,1}\neq 0$) (see Lemma~\ref{lem: S1-S2}), then Item~\ref{item: X(y)-one-to-one} follows. By the construction of $X(y)$, for all $y\in\mathcal{S}_2$, there exists $s\in\mathcal{C}$ such that $(X(y),y)=(\eta(s)s,\eta(s)s^{-1})\in\mathcal{K}$, which implies Item~\ref{item: X(y)-solution-kernel}. Moreover, $\eta(s)s$ and $\eta(s)s^{-1}$ admit non-vanishing derivatives on $\mathcal{C}\setminus\{\pm 1\}$ (resp.~$\mathcal{C}\setminus\{1\}$) and admit non-vanishing left/right derivatives at $\pm 1$ (resp.~$1$) in the case $p_{1,1}=0$ (resp.~$p_{1,1}\neq 0$) (see Lemma~\ref{lem: S1-S2-smoothness-angle-at-1}), then Items~\ref{item: X(y)-derivative} and \ref{item: X(y)-derivative-at-1} follow.
\end{proof}

We formulate the next lemma to extend the domains of definition of $X(y)$ and $Y(x)$.
\begin{lemma}\label{lem: solution-neigborhood-S2}
	Under Assumptions~\ref{assumption: H1}--\ref{assumption: H3} and \ref{assumption: monotonic}, $X(y)$ can be continued analytically on a neighborhood $V\subset\mathbb{C}\setminus\{1\}$ of $\mathcal{S}_2\setminus\{1\}$ such that:
	\begin{enumerate}[label=\textnormal{(\roman{*})},ref=\textnormal{(\roman{*})}]
		\item $K(X(y),y) = 0$ for all $y\in V$;
		\item $X(y)\in\mathcal{S}_1^-$ and $\vert X(y)\vert > \vert y\vert$ for all $y\in V\cap\mathcal{S}_2^+$;
		\item $X(y)\in\mathcal{S}_1^+$ and $\vert X(y)\vert < \vert y\vert$ for all $y\in V\cap\mathcal{S}_2^-$.
	\end{enumerate}
	We have an analogous statement for $Y(x)$.
\end{lemma}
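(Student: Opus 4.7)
The plan is to build $V$ as a union of open disks around points $y_0 \in \mathcal{S}_2\setminus\{1\}$, on each of which $X$ extends analytically via the implicit function theorem; then items (ii) and (iii) will follow from the parametrization $(x,y) = (\eta(s)s,\, \eta(s)s^{-1})$ combined with the orientation statements in Lemma~\ref{lem: S1-S2}. The only nontrivial step I foresee is the orientation bookkeeping to determine which side of $\mathcal{S}_2$ corresponds to which side of $\mathcal{C}$; the rest reduces to a routine IFT argument supported by Lemmas~\ref{lem: S1-S2-smoothness-angle-at-1} and~\ref{lem: X(y)-Y(x)-on-S1-S2}.

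\textbf{Local extension.} First I would fix $y_0 \in \mathcal{S}_2 \setminus \{1\}$ and apply the IFT to $K(x,y)=0$ at $(X(y_0), y_0)$. By Lemma~\ref{lem: S1-S2-smoothness-angle-at-1} this point is a regular point of $V[K]$, and moreover $\partial_x K(X(y_0), y_0)\neq 0$: differentiating the identity $K(X(y),y)\equiv 0$ along $\mathcal{S}_2$ gives $\partial_x K \cdot X'(y) + \partial_y K = 0$ where $X'(y)$ is the tangential derivative, non-vanishing by Lemma~\ref{lem: X(y)-Y(x)-on-S1-S2}\ref{item: X(y)-derivative}, so $\partial_x K=0$ would force $\partial_y K=0$, contradicting regularity. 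The IFT thus supplies an analytic extension $X_{y_0}$ on an open disk $U_{y_0}\subset\mathbb{C}\setminus\{1\}$ with $K(X_{y_0}(y),y)\equiv 0$ and $X_{y_0}|_{U_{y_0}\cap\mathcal{S}_2}=X$. Uniqueness of the implicit branch forces these local extensions to agree on overlaps, defining a single analytic function on $V := \bigcup_{y_0} U_{y_0}$ satisfying (i).

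\textbf{Parametric description.} To control $|X(y)|$ and the location of $X(y)$ relative to $\mathcal{S}_1$, I would re-express the extension in terms of $s$. For $s_0\in\mathcal{C}$ with $y(s_0)\neq 1$ (so $s_0\neq 1$, and also $s_0\neq -1$ in the case $p_{1,1}=0$), the inequality $\partial_\eta K\neq 0$ established in the proof of Lemma~\ref{lem: S1-S2-smoothness-angle-at-1} lets me analytically continue $\eta(s)$ to a disk around $s_0$. Assumption~\ref{assumption: monotonic} ensures the complex derivative of $s\mapsto y(s)=\eta(s)s^{-1}$ does not vanish, so this map is a local biholomorphism with inverse $s(y)$, and $X(y)=\eta(s(y))\cdot s(y)$ must coincide with the extension above by uniqueness. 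The decisive identity is then the one-line calculation
\begin{equation*}
\frac{|X(y)|}{|y|}\;=\;\frac{|\eta(s)\,s|}{|\eta(s)\,s^{-1}|}\;=\;|s|^{2},
\end{equation*}
so the sign of $|X(y)|-|y|$ coincides with the sign of $|s(y)|-1$.

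\textbf{Side matching.} Finally, since $s\mapsto y(s)$ is a holomorphic local diffeomorphism and by Lemma~\ref{lem: S1-S2}\ref{item: direction-S1-S2} carries the counterclockwise circle $\mathcal{C}$ to the clockwise-oriented $\mathcal{S}_2$, the preservation of the ``left-hand side'' under a conformal map---together with the fact that the interior of a CCW Jordan curve lies on its left while the interior of a CW Jordan curve lies on its right---forces $\mathcal{C}^+$ to be locally sent to $\mathcal{S}_2^-$ and $\{|s|>1\}$ locally to $\mathcal{S}_2^+$. The analogous argument for the orientation-preserving map $s\mapsto x(s)$ yields $\mathcal{C}^+\to\mathcal{S}_1^+$ and $\{|s|>1\}\to\mathcal{S}_1^-$ locally. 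Combined with $|X(y)|/|y|=|s|^2$, this gives (ii) and (iii): a point $y\in V\cap\mathcal{S}_2^+$ corresponds to $|s(y)|>1$, hence $|X(y)|>|y|$ and $X(y)\in\mathcal{S}_1^-$, and symmetrically for $y\in V\cap\mathcal{S}_2^-$. The analogous statement for $Y(x)$ is obtained by swapping the roles of $x$ and $y$ throughout.
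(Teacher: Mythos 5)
Your proof is correct, and for items (ii) and (iii) it takes a genuinely different route from the paper's. The first step (extension by the implicit function theorem, with $\partial_x K\neq 0$ deduced from regularity of the point together with $X'\neq 0$) is exactly the paper's. But where the paper then observes that the sign of $\vert X(y)\vert-\vert y\vert$ and the side of $\mathcal{S}_1$ containing $X(y)$ are constant on each half of a tubular neighborhood, and so verifies the assertions only at real points near the intersection of $\mathcal{S}_2\setminus\{1\}$ with $\mathbb{R}$ --- at the cost of a case-by-case Taylor expansion of $X$ there ($p_{1,1}=0$ with $p_{0,1}=p_{1,0}$, with $p_{0,1}\gtrless p_{1,0}$, and $p_{1,1}\neq 0$), including sign estimates such as $X'(y_0)<-1$ --- you instead work in the uniformizing variable $s$, where the identity $\vert X(y)\vert/\vert y\vert=\vert s\vert^2$ together with the orientation statement of Lemma~\ref{lem: S1-S2}\ref{item: direction-S1-S2} settles both assertions at every point of $V$ at once, with no case distinction and no propagation argument. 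What your approach buys is uniformity and the elimination of the explicit computations; what it costs is a heavier reliance on the orientation bookkeeping of Lemma~\ref{lem: S1-S2}\ref{item: direction-S1-S2} (itself a consequence of Assumption~\ref{assumption: monotonic}). Two small points worth making explicit: to continue $\eta$ holomorphically off $\mathcal{C}$ you should note that $K(\eta s,\eta s^{-1})$, a Laurent series in $s$, is analytic on a full two-dimensional neighborhood of $(\eta(s_0),s_0)$ because $\vert\eta(s_0)\vert<1$ strictly for every $s_0$ corresponding to a point of $\mathcal{S}_2\setminus\{1\}$ (Lemma~\ref{lem: solutions-mathcal-K}), so $\vert\eta s\vert,\vert\eta s^{-1}\vert<1$ persists nearby; and the identity $\vert X(y)\vert/\vert y\vert=\vert s\vert^2$ degenerates where $\eta(s)=0$, but this occurs only at $y=0$ in the case $p_{1,1}=0$, $p_{1,0}=p_{0,1}$, where $0\in\mathcal{S}_2$ and hence never belongs to $V\cap\mathcal{S}_2^{\pm}$.
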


\begin{proof}
	We only prove the lemma for $X(y)$. The result for $Y(x)$ is deduced similarly. For any $y\in\mathcal{S}_2\setminus\{1\}$, since $(X(y),y)$ is a non-singular point of $V[K]$ (Lemma~\ref{lem: S1-S2-smoothness-angle-at-1}), $X'(y)\not= 0$ (Lemma~\ref{lem: X(y)-Y(x)-on-S1-S2}), and $\partial_x K(X(y),y) X'(y)+\partial_y K(X(y),y) =0$ (by differentiating $K(X(y),y)=0$), then $\partial_xK(X(y),y)$ and $\partial_y K(X(y),y)$ are simultaneously non-vanishing. Thus, by the implicit function theorem (see \cite[Sec.\,B4]{GuRo-65}), $X(y)$ can be extended as an analytic bijection in a neighborhood $V\subset\mathbb{C}\setminus\{1\}$ small enough of $\mathcal{S}_2\setminus\{1\}$ such that $K(X(y),y)=0$. We remark that with $V$ small enough, $\mathcal{S}_2$ divides $V$ into two halves, on which the sign of $(\vert X(y)\vert - \vert y\vert)$ does not change and $X(y)$ does not pass through $\mathcal{S}_1$. Hence, it suffices to prove the assertions in the lemma for some $y$ close to the intersection of $\mathcal{S}_2\setminus\{1\}$ and $\mathbb{R}$.
	
	Consider first the case $p_{1,1}=0$, $p_{0,1}=p_{1,0}$, where $0$ is the intersection of $\mathcal{S}_2$ and $\mathbb{R}$. Differentiating once and twice the identity $K(X(y),y)=0$ at $y=0$, one has:
	\begin{align*}
	& X'(0) = -\frac{\partial_yK(0,0)}{\partial_xK(0,0)} = -1,\\
	&X''(0) = -\frac{\partial_{xx}K(0,0)X'(0)^2+2\partial_{xy}K(0,0)X'(0)+\partial_{yy}K(0,0)}{\partial_xK(0,0)} = -\frac{2(1+p_{-1,1}+p_{1,-1})}{p_{0,1}}.
	\end{align*}
	We then obtain the asymptotics of $X(y)$ around $0$:
	\begin{equation*}
	X(y) = -y - \frac{1+p_{-1,1}+p_{1,-1}}{p_{0,1}}y^2 + o(y^2).
	\end{equation*}
	We further point out that any high order derivative of $X(y)$ at $0$ also admits a real value, since it is formed by high order partial derivatives of $K(x,y)$ at $(0,0)$. This indicates that $X(y)$ is real on a neighborhood $V_0\subset\mathbb{R}$ of $0$. Hence, for all $y>0$ close to $0$ (that is, $y\in\mathcal{S}_2^+$), one has $X(y)<X(0)$ (that is, $X(y)\in\mathcal{S}_1^-$) and $\vert X(y)\vert >\vert y\vert $. On the other hand, for all $y<0$ close to $0$, $X(y)>X(0)$ and $\vert X(y)\vert <\vert y\vert $. The lemma is then proven in this case.
	
	We move to the other cases and let $y_0$ be the intersection of $\mathcal{S}_2\setminus\{1\}$ and $\mathbb{R}$. In the case $p_{1,1}=0$, $p_{0,1}>p_{1,0}$, one has $X(y_0)=-y_0<0$ and the asymptotics of $X(y)$ around $y_0$:
	\begin{equation*}
	X(y) = -y_0 + X'(y_0)(y-y_0) + o(y-y_0),
	\end{equation*}
	where
	\begin{equation*}
	X'(y_0) = -\frac{\partial_yK(-y_0,y_0)}{\partial_xK(-y_0,y_0)},
	\end{equation*}
	obtained by differentiating $K(X(y),y)=0$ at $y=y_0$. We now prove that $X'(y_0)<-1$. One has:
	\begin{equation*}
	\partial_y K(-y_0,y_0) = -p_{1,0} - y_0 \left( 1+ \sum_{(k,\ell)\neq (1,0)} p_{k,\ell}(-\ell+1) (-1)^{-k+1} y_0^{-k-\ell} \right).
	\end{equation*}
	Since
	\begin{equation*}
	\left| \sum_{(k,\ell)\neq (1,0)} p_{k,\ell}(-\ell+1) (-1)^{-k+1} y_0^{-k-\ell} \right| < \sum_{(k,\ell)\neq (1,0)} p_{k,\ell}(-\ell+1) = 1-p_{1,0} \leq 1,
	\end{equation*}
	then $\partial_y K(-y_0,y_0) <0$. We now have:
	\begin{align*}
	\partial_x K(-y_0,y_0) - \partial_y K(-y_0,y_0)
	&=\left(y_0- \sum p_{k,\ell}(-k-\ell+1)(-1)^{-k}y_0^{-k-\ell+1}\right) - \frac{K(-y_0,y_0)}{y_0}\\
	&= y_0\left(1 - \sum p_{k,\ell}(-k-\ell+1)(-1)^{-k}y_0^{-k-\ell}\right).
	\end{align*}
	Since
	\begin{equation*}
	\left| \sum p_{k,\ell}(-k-\ell+1) (-1)^{-k} y_0^{-k-\ell} \right| < \sum p_{k,\ell}(-k-\ell+1) = 1,
	\end{equation*}
	then $\partial_x K(-y_0,y_0) - \partial_y K(-y_0,y_0)>0$. Now notice that by Lemma~\ref{lem: S1-S2}\ref{item: direction-S1-S2}, one has $X'(y_0)<0$. Thus, $\partial_y K(-y_0,y_0) < \partial_x K(-y_0,y_0) <0$ and $X'(y_0)<-1$. We also notice that $X(y)$ is real for any $y\in\mathbb{R}$ close enough to $y_0$, since the Taylor expansion of $X(y)$ at $y_0$ admits all real coefficients. This implies that for all $y>y_0$ close to $y_0$, one has $X(y)<X(y_0)$ and $\vert X(y)\vert >\vert y\vert$. Similarly, for all $y<y_0$ close to $y_0$, then $X(y)>X(y_0)$ and $\vert X(y)\vert < \vert y\vert$. The lemma is proven for the case $p_{1,1}=0$, $p_{0,1}>p_{1,0}$.
	
	Now in the case $p_{1,1}=0$, $p_{0,1}<p_{1,0}$, one has $X(y_0)=-y_0>0$ and the asymptotics of $X(y)$ around $y_0$:
	\begin{equation*}
	X(y) = -y_0 + X'(y_0)(y-y_0) + o(y-y_0),
	\end{equation*}
	where $X'(y_0)=1/Y'(-y_0)\in (-1,0)$. The rest of the proof in this case follows the case $p_{1,1}=0$, $p_{0,1}>p_{1,0}$.
	
	In the case $p_{1,1}\not=0$, it is seen that $X(y_0)=y_0<0$ and as $y$ is close to $y_0$,
	\begin{equation*}
		X(y) = y_0 + X'(y_0)(y-y_0) + o(y-y_0),
	\end{equation*}
	where $X'(y_0)<0$ by Lemma~\ref{lem: S1-S2}\ref{item: direction-S1-S2}. Moreover, $X(y)$ is real for any $y\in\mathbb{R}$ close enough to $y_0$, since the Taylor expansion of $X(y)$ at $y_0$ admits all real coefficients. Hence, for all $y>y_0$ close to $y_0$, one has $X(y)<X(y_0)$ and $\vert X(y)\vert >\vert y\vert$. Similarly, for all $y<y_0$ close to $y_0$, then $X(y)>X(y_0)$ and $\vert X(y)\vert < \vert y\vert$. The proof is then complete.
\end{proof}

\begin{lemma}\label{lem: X(y) around 0}
	Under Assumptions~\ref{assumption: H1}--\ref{assumption: H3} and \ref{assumption: monotonic}, in the case $p_{1,1}=0$, $p_{0,1}>p_{1,0}$, where $0$ lies in $\mathcal{S}_1^+$ and $\mathcal{S}_2^-$, the roots of $K(x,y)$ around $(0,0)$ can be represented by an analytic continuation of $X(y)$ along a path in $\overline{\mathcal{S}_2^-}\cap\mathcal{C}^+$ under the form $(X(y),y)$. In particular for $y$ close to $0$,
	\begin{equation}\label{eq: X(y) near 0}
	X(y) = -\frac{p_{1,0}}{p_{0,1}}y-\frac{p_{1,0}/p_{0,1}+p_{-1,1}+p_{1,-1}}{p_{0,1}}y^2 +o(y^2).
	\end{equation}
\end{lemma}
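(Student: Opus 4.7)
The plan is threefold: apply the holomorphic implicit function theorem at $(0,0)\in\{K=0\}$ to produce a local analytic germ, compute its Taylor expansion at $0$, and identify this germ with the analytic continuation of $X$ along a path in $\overline{\mathcal{S}_2^-}\cap\mathcal{C}^+$.

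For the first step, I observe that in the case $p_{1,1}=0$, the kernel $K(x,y)=xy-\sum_{k,\ell}p_{k,\ell}x^{1-k}y^{1-\ell}$ is analytic at $(0,0)$: Assumption~\ref{assumption: H1} forces every exponent $1-k$, $1-\ell$ to be non-negative, and $K(0,0)=-p_{1,1}=0$. A direct calculation gives $\partial_x K(0,0)=-p_{0,1}$, which is non-zero under the case hypothesis $p_{0,1}>p_{1,0}\geq 0$. The holomorphic implicit function theorem (\cite{GuRo-65}) then produces a unique analytic germ $\widetilde X(y)$ at $y=0$ satisfying $\widetilde X(0)=0$ and $K(\widetilde X(y),y)\equiv 0$.

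For the second step, I successively differentiate $K(\widetilde X(y),y)\equiv 0$ at $y=0$. The first derivative yields $\widetilde X'(0)=-\partial_y K(0,0)/\partial_x K(0,0)=-p_{1,0}/p_{0,1}$, which already gives the linear coefficient in \eqref{eq: X(y) near 0}. For the quadratic coefficient I use the second-order partials $\partial_{xx}K(0,0)=-2p_{-1,1}$, $\partial_{yy}K(0,0)=-2p_{1,-1}$ and $\partial_{xy}K(0,0)$ read off from the bilinear term of $K$; the identity
\[
\widetilde X''(0)=-\frac{\partial_{xx}K\,(\widetilde X'(0))^2+2\partial_{xy}K\,\widetilde X'(0)+\partial_{yy}K}{\partial_x K}\bigg|_{(0,0)}
\]
then reproduces the coefficient of $y^2$ in \eqref{eq: X(y) near 0} after routine algebraic simplification.

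For the third step, I invoke Lemma~\ref{lem: solution-neigborhood-S2}: $X$ already extends analytically to a tubular neighborhood of $\mathcal{S}_2\setminus\{1\}$, with $X(y)\in\mathcal{S}_1^+$ and $|X(y)|<|y|$ whenever $y$ lies in the intersection of this neighborhood with $\mathcal{S}_2^-$. Since $0\in\mathcal{S}_2^-$ by Lemma~\ref{lem: S1-S2}\ref{item: position-of-0-p11=0-p01not=p10}, one chooses a simple path $\gamma\subset\overline{\mathcal{S}_2^-}\cap\mathcal{C}^+$ joining a real point of $\mathcal{S}_2$ to $0$ and continues $X$ analytically along $\gamma$. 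The estimate $|X(y)|<|y|$ propagates by continuity and forces $X(y)\to 0$ as $y\to 0$ along $\gamma$, so the continuation reaches $(0,0)$; uniqueness of the analytic branch of $\{K=0\}$ through the origin (step one) then identifies the continuation with $\widetilde X$ near $0$. The main obstacle is precisely the justification of this continuation along $\gamma$: one must guarantee that no branch point of the projection $(x,y)\mapsto y$ on $\{K=0\}$ intervenes, i.e., that $\partial_x K(X(y),y)$ stays non-zero all along $\gamma$. Lemma~\ref{lem: S1-S2-smoothness-angle-at-1} only rules out singular points of $V[K]$ on $\mathcal{K}$ (apart from $(1,1)$), whereas the path $\gamma$ travels through the interior $\mathcal{S}_2^-$ where new critical points of the projection could a priori appear; a careful topological argument using Assumption~\ref{assumption: monotonic} together with the structure of $\mathcal{K}$ from Subsection~\ref{subsec: mathcal K-description} is required to exclude them.
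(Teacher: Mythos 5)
Your proposal is correct and is essentially the paper's own proof: the paper likewise continues $X$ along a path in $\overline{\mathcal{S}_2^-}\cap\mathcal{C}^+$ chosen to avoid the discrete set of points of $V[K]$ where $\partial_x K=0$, uses $\vert X(y)\vert<\vert y\vert$ from Lemma~\ref{lem: solution-neigborhood-S2} to force $X(y)\to 0$, notes $\partial_x K(0,0)=-p_{0,1}\neq 0$ so that $y=0$ is not a branch point, and then differentiates $K(X(y),y)=0$ exactly as you do, so the ``obstacle'' you flag at the end is resolved simply by the freedom in choosing $\gamma$ (the critical values of the projection form a discrete set in the open region), not by a deeper topological argument. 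One caveat: your (correct) second-derivative identity does not actually ``reproduce'' the printed $y^2$-coefficient, since it yields $-\bigl(p_{-1,1}(p_{1,0}/p_{0,1})^2+(1-p_{0,0})\,p_{1,0}/p_{0,1}+p_{1,-1}\bigr)/p_{0,1}$, which agrees with \eqref{eq: X(y) near 0} only when $p_{1,0}=p_{0,1}$ and $p_{0,0}=0$ --- this is a slip in the paper's stated expansion rather than in your method, and it is harmless because only the first-order information $\vert X'(0)\vert<1$ is used elsewhere.
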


The statement in Lemma~\ref{lem: X(y) around 0} may not hold for $Y(x)$ in the case $p_{1,1}=0$, $p_{0,1}>p_{1,0}$,  since $p_{1,0}$ may be equal to $0$, and $x=0$ then becomes a branch point of $Y(x)$.

\begin{proof}
	Construct a path $\gamma:[0,1]\to \overline{\mathcal{S}_2^-}\cap\mathcal{C}^+$ starting at $\gamma(0)\in\mathcal{S}_2\setminus\{1\}$ and ending at $\gamma(1)=0$. Further, $\gamma((0,1))$ is included in $\mathcal{S}_2^+$ and avoids any point in $V[K]$ such that $\partial_x K=0$. Then, $X(y)$ can be analytically extended along $\gamma$. By Lemma~\ref{lem: solution-neigborhood-S2}, $\vert X(y)\vert<\vert y\vert $ for any $y\in\gamma$ close enough to $\gamma(0)$. Since $\gamma((0,1))$ does not pass through $\mathcal{S}_2$, then we also have $\vert X(y)\vert<\vert y\vert $ for all $y\in\gamma((0,1))$. Thus, $X(y)$ converges to $0$ as $y$ tends to $0$ along $\gamma$. Further, since $\partial_x K(0,0)=-p_{0,1}\not= 0$, then $y=0$ is not a branch point of $X(y)$.

	Now differentiating the identity $K(X(y),y)=0$ once and twice at $y=0$, we have:
	\begin{equation*}
	X'(0) = -\frac{p_{1,0}}{p_{0,1}}\quad \text{and}\quad
	X'' (0) = -\frac{2(p_{1,0}/p_{0,1}+p_{-1,1}+p_{1,-1})}{p_{0,1}},
	\end{equation*}
	and \eqref{eq: X(y) near 0} then follows.
\end{proof}


\subsection{Proof of Proposition~\ref{prop: extend-solutions-intro}}\label{subsec: proof-Prop_extend-solutions}

The proof scheme relies heavily on analytic continuations along paths and comparison between the modulus of two coordinates of the kernel's solutions.

\begin{proof}[Proof of Proposition~\ref{prop: extend-solutions-intro}\ref{item: Prop1-item1}]
	Consider first the case $p_{1,1}=0$, $p_{0,1}=p_{1,0}$, in which $0\in\mathcal{S}_1,\,\mathcal{S}_2$. Construct a closed path $\gamma:[0,1]\to \overline{\mathcal{S}_2^-}\cap\mathcal{C}^+$ such that $\gamma(0)=\gamma(1)=0$, $\gamma\big((0,1)\big)\in\mathcal{S}_2^-$. We further assume that $\gamma((0,1))$ stays in $\mathcal{S}_2^-$ and avoids any branch point of $X(y)$. Extend $X(y)$ along $\gamma$, then by Lemma~\ref{lem: solution-neigborhood-S2}, $\vert X(y)\vert < \vert y\vert$ for any $y\in \gamma\big((0,1)\big)$ close enough to $\gamma(0)$. Since $\gamma((0,1))$ never passes through $\mathcal{S}_2$, then we also have $X(y)\vert < \vert y\vert$ for all $y\in \gamma\big((0,1)\big)$. Hence, as $y$ tends to $\gamma(1)$ along $\gamma$, $X(y)$ also returns to $0$, which implies that $X(y)$ is analytic in a neighborhood of $\gamma$. The extension of $X(y)$ on the interior of $\gamma$ can be then expressed by Cauchy's integral formula. Further, Morera's theorem implies that $X(y)$ is analytic in the interior of $\gamma$, since $\oint_\gamma X(y)dy = 0$. Therefore, $X(y)$ does not have any branch point in $\mathcal{S}_2^-\cap \mathcal{C}$ and similarly for $Y(x)$ on $\mathcal{S}_1^-\cap \mathcal{C}$.

	We move to the case $p_{1,1}=0$, $p_{0,1}>p_{1,0}$, as we shall study separately $X(y)$ and $Y(x)$. Construct a closed path $\gamma:[0,1]\to \mathcal{S}_2^-\cap\mathcal{C}^+$ such that $\gamma(0)=\gamma(1)=0$ and $\gamma$ avoids any branch point of $X(y)$. One can extend $X(y)$ along $\gamma$ since $X(y)$ is already well defined around $0$ by Lemma~\ref{lem: X(y) around 0}. Further, \eqref{eq: X(y) near 0} implies that $\vert X(y)\vert < \vert y\vert$ for any $y\in \gamma\big((0,1)\big)$ close enough to $\gamma(0)$. Thus, $\vert X(y)\vert < \vert y\vert$ for all $y\in \gamma\big((0,1)\big)$, since $\gamma$ does not pass through $\mathcal{S}_2$. Then, as $y$ tends to $0$ along $\gamma$, $X(y)$ must return to $0$. By the same argument as in the preceding case, $X(y)$ can be analytically extended on the interior of $\gamma$. It follows that $X(y)$ does not have any branch point in $\mathcal{S}_2^-\cap \mathcal{C}$.
	
	We proceed with the proof for $Y(x)$ still in the case $p_{1,1}=0$, $p_{0,1}>p_{1,0}$. Let $x_0$ (resp.~$y_0=-x_0$) be the intersection of $\mathcal{S}_1\setminus\{1\}$ (resp.~$\mathcal{S}_2\setminus\{1\}$) and $\mathbb{R}$. Construct a closed path $\gamma:[0,1]\to \overline{\mathcal{S}_1^-}\cap\mathcal{C}^+$ such that $\gamma(0)=\gamma(1)=x_0$, $\gamma\big((0,1)\big)\in\mathcal{S}_1^-$. Extend $Y(x)$ along $\gamma$, then by Lemma~\ref{lem: solution-neigborhood-S2} and the construction of $\gamma$ ($\gamma$ does not pass through $\mathcal{S}_1$), $\vert Y(x)\vert < \vert x\vert$ for all $x\in \gamma\big((0,1)\big)$. We notice that as $x$ tends to $x_0$ along $\gamma$, then $Y(x)$ must return to $y_0$. Indeed, assuming that $\vert Y(\gamma(1))\vert < \vert y_0 \vert$, we continue to extend $Y(x)$ along a path in $\mathcal{S}_1^+$ from $x_0$ to $0$. Then as $x$ tends to $0$, $\vert Y(x)\vert < \vert x\vert$ (since the path stays in $\mathcal{S}_1^+$), and thus $Y(x)$ must converge to $0$. However, the behavior of the kernel's solutions around $(0,0)$ (see Lemma~\ref{lem: X(y) around 0}) suggests that $\vert Y(x)\vert > \vert x\vert$ as $x$ close to $0$, which yields a contradiction. Hence, $\vert Y(\gamma(1))\vert$ must be equal to $\vert y_0 \vert$, and thus $\left(\gamma(1),Y(\gamma(1))\right)$ is in $\mathcal{K}$ and coincides with $(x_0,y_0)$, thanks to the injectivity of the mappings $s\mapsto(\eta(s)s,\eta(s)s^{-1})$ and $Y:\mathcal{S}_1\to\mathcal{S}_2$ (see Lemma~\ref{lem: S1-S2}\ref{item: one-to-one} and Lemma~\ref{lem: X(y)-Y(x)-on-S1-S2}). We then have $Y(\gamma(1)) = y_0$. By the reasoning in the first case, $Y(x)$ does not have any branch point in $\mathcal{S}_1^-\cap \mathcal{C}^+$.
	
	We move to the case $p_{1,1}\not= 0$. Let $x_0$ (resp.~$y_0=x_0$) be the intersection of $\mathcal{S}_1\setminus\{1\}$ (resp.~$\mathcal{S}_2\setminus\{1\}$) and $\mathbb{R}$. Construct a closed path $\gamma:[0,1]\to \overline{\mathcal{S}_2^-}\cap\mathcal{C}^+$ such that $\gamma(0)=\gamma(1)=y_0$, $\gamma\big((0,1)\big)\in\mathcal{S}_2^-$ and $\gamma$ avoids any branch point of $X(y)$. Extend $X(y)$ along $\gamma$, then by Lemma~\ref{lem: solution-neigborhood-S2} and the construction of $\gamma$ ($\gamma$ does not pass through $\mathcal{S}_2$), $\vert X(y)\vert < \vert y\vert$ for all $y\in \gamma\big((0,1)\big)$. We notice that as $y$ tends to $y_0$ along $\gamma$, then $X(y)$ must return to $x_0$. Indeed, assuming that $\vert X(\gamma(1))\vert < \vert x_0 \vert$, we continue to extend $X(y)$ along a path in $\mathcal{S}_2^+$ from $y_0$ to $0$. Then as $y$ tends to $0$, $\vert X(y)\vert < \vert y\vert$ (since the path stays in $\mathcal{S}_1^+$), and thus $Y(x)$ must converge to $0$, which causes a contradiction since $(0,0)$ is not a root of $K(x,y)$. Hence, $\vert X(\gamma(1))\vert$ must be equal to $\vert x_0 \vert$, and thus $X(\gamma(1)) = x_0$. By the reasoning in the first case, $X(y)$ does not have any branch point in $\mathcal{S}_2^-\cap \mathcal{C}$ and similarly for $Y(x)$ on $\mathcal{S}_1^-\cap \mathcal{C}$.
\end{proof}

Hereafter, the function $X(y)$ (resp.~$Y(x)$), originally defined on $\mathcal{S}_2$ (resp.~$\mathcal{S}_1$), will also indicate its extension on $\mathcal{S}_2^-\cap\mathcal{C}^+$ (resp.~$\mathcal{S}_1^-\cap\mathcal{C}^+$).

\begin{proof}[Proof of Proposition~\ref{prop: extend-solutions-intro}\ref{item: Prop1-item2}]
	We recall that $0\in\mathcal{S}_1^+,\mathcal{S}_2^+$ in the case $p_{1,1}\not=0$, and $0\in\mathcal{S}_1,\mathcal{S}_2$ in the case $p_{1,1}=0$, $p_{1,0}=p_{0,1}$. Since these properties are similar to that of the symmetric models of \cite{HoRaTa-20}, we then refer the readers to \cite[Lem.\,4]{HoRaTa-20} for the proof of these cases. We now prove the lemma for the case $p_{1,1}=0$, $p_{1,0}<p_{0,1}$.

	We first introduce some notations. Let $\mathcal{V}_x$, $\mathcal{V}_y$ respectively denote $\{(x,y)\in V[K]:\vert y\vert <\vert x\vert <1\}$, $\{(x,y)\in V[K]:\vert x\vert <\vert y\vert <1\}$, and $P_x,P_y:\mathbb{C}^2\to\mathbb{C}$ respectively denote the projections along the first and second variable.
	
	Reasoning by contradiction, we assume that there exists $(x_1,y_1)\in\mathcal{V}_x$ such that $x_1\in\mathcal{S}_1^+$. Construct a path $\gamma:[0,1]\to \mathcal{S}_1^+$ having two endpoints $\gamma(0)=x_1$ and $\gamma(1)=0$, and avoiding any points of $V[K]$ such that $\partial_x K=0$. There thus exists a path $\gamma':[0,1]\to V[K]$ such that $\gamma'(0)=(x_1,y_1)$ and $P_x\circ\gamma'=\gamma$. Since $\gamma$ stays in $\mathcal{S}_1^+$, then $\gamma'$ does not meet $\mathcal{K}\setminus\{(0,0)\}$. Consequently, $\gamma'((0,1))$ stays in $ \mathcal{V}_x$ and $P_y\circ\gamma'(1)=0$. On the other hand, by \eqref{eq: X(y) near 0}, one must have $\vert P_x\circ\gamma'(t)\vert < \vert P_y\circ\gamma'(t)\vert$ for $t$ close to $1$, which generates a contradiction. Hence, there is no $(x_1,y_1)\in\mathcal{V}_x$ such that $x_1\in\mathcal{S}_1^+$.
	
	Now let $y_0$ be the intersection of $\mathcal{S}_2\setminus\{1\}$ and $\mathbb{R}$. Assuming that there exists $(x_1,y_1)\in\mathcal{V}_y$ such that $y_1\in\mathcal{S}_2^+$, we construct a path $\gamma:[0,1]\to \mathcal{C}^+$ such that:
	\begin{itemize}
		\item It has two endpoints $\gamma(0)=y_1$ and $\gamma(1)=0$, and passes through $\gamma(\tau)=y_0$ with $\tau\in (0,1)$;
		\item $\gamma((0,\tau))\in\mathcal{S}_2^+$ and $\gamma((\tau,1))\in\mathcal{S}_2^-\cap\mathcal{C}^+$;
		\item $\gamma$ avoids any point of $V[K]$ such that $\partial_yK=0$.
	\end{itemize}
	There thus exists a path $\gamma':[0,1]\to V[K]$ such that $\gamma'(0)=(x_1,y_1)$ and $P_y\circ\gamma'=\gamma$. Since $\gamma((0,\tau))$ stays in $\mathcal{S}_2^+$, then $\gamma'((0,\tau))$ does not meet $\mathcal{K}$ and thus stays in $\mathcal{V}_y$.
	
	We prove that with the above constructions, the path $\gamma'$ has to pass through $\mathcal{K}$ at $(X(y_0),y_0)$. Indeed, we assume the opposite, that $\gamma'$ does not meet $\mathcal{K}\setminus \{(0,0)\}$, then $\gamma'((0,1))\subset \mathcal{V}_y$ and thus $\gamma'(1) = (0,0)=(X(0),0)$. The analytic continuation of $X(y)$ implies that $\gamma'([\tau,1]) = \{(X(y),y):y\in\gamma([\tau,1])\}$ and thus, $\gamma'(\tau) = (X(y_0),y_0)$, which contradicts the assumption.

	Since $\gamma'$ passes through $(X(y_0),y_0)$, then by Lemma~\ref{lem: solution-neigborhood-S2}, for any $t\in (0,\tau)$ close enough to $\tau$, $\vert P_x\circ \gamma'(t) \vert > \vert P_y\circ \gamma'(t) \vert $, which contradicts the assertion that $\gamma'((0,\tau))$ stays in $\mathcal{V}_y$. We thus conclude that there is no $(x_1,y_1)\in\mathcal{V}_y$ such that $y_1\in\mathcal{S}_2^+$. The proof is then complete.
\end{proof}

\section{Conformal welding}\label{sec: conformal welding}
This section aims at constructing conformal mappings from $\mathcal{S}_1^+$ and $\mathcal{S}_2^+$ onto two disjoint, complementary domains such that the two coordinates of any point in $\mathcal{K}$ are joined together by the boundary value of these mappings. The problem can be reduced to that of finding two conformal mappings from the upper and lower half-planes onto disjoint, complementary domains such that the extensions of the mappings on $\mathbb{R}$ differ by a quasisymmetric homeomorphism (also called a shift function). Such a problem is usually referred to as conformal welding.

We introduce some notations. Let $\mathcal{A}$ denote a Jordan curve (that is, a non-self-intersecting closed curve) or an infinite non-self-intersecting curve dividing the complex plane into two disjoint domains $\mathcal{A}^+$ and $\mathcal{A}^-$ (if $\mathcal{A}$ is a Jordan curve, then $\mathcal{A}^-$ conventionally indicates the domain containing infinity). Let also $f$ be a function defined on $\mathcal{A}^+$  (resp.~$\mathcal{A}^-$), then for any $t\in\mathcal{A}$, $f^+(t)$ (reps.\ $f^-(t)$) will denote the limit value of $f(x)$ as $x$ tends to $t$, provided it exists.

\subsection{Preliminary construction of conformal mappings}

In this subsection, we construct conformal mappings from $\mathcal{S}_1^+$ and $\mathcal{S}_2^+$ respectively onto the upper and lower half-planes, and introduce the corresponding shift functions on the real line.

\begin{figure}[t]
	\centering
	\begin{tikzpicture}
	\node at (0,0) {\includegraphics[scale = 0.2]{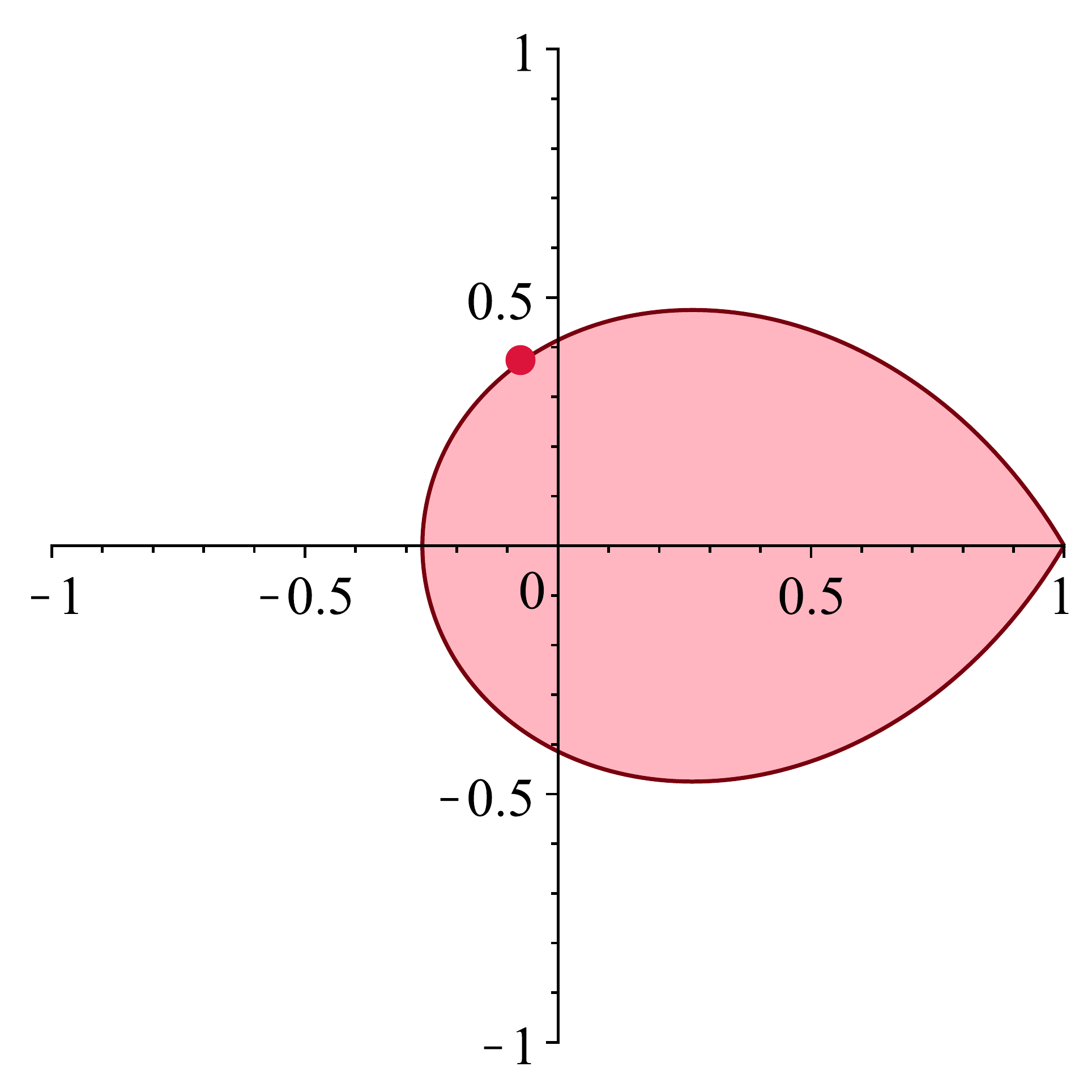}};
	\node at (0,-4) {\includegraphics[scale = 0.2]{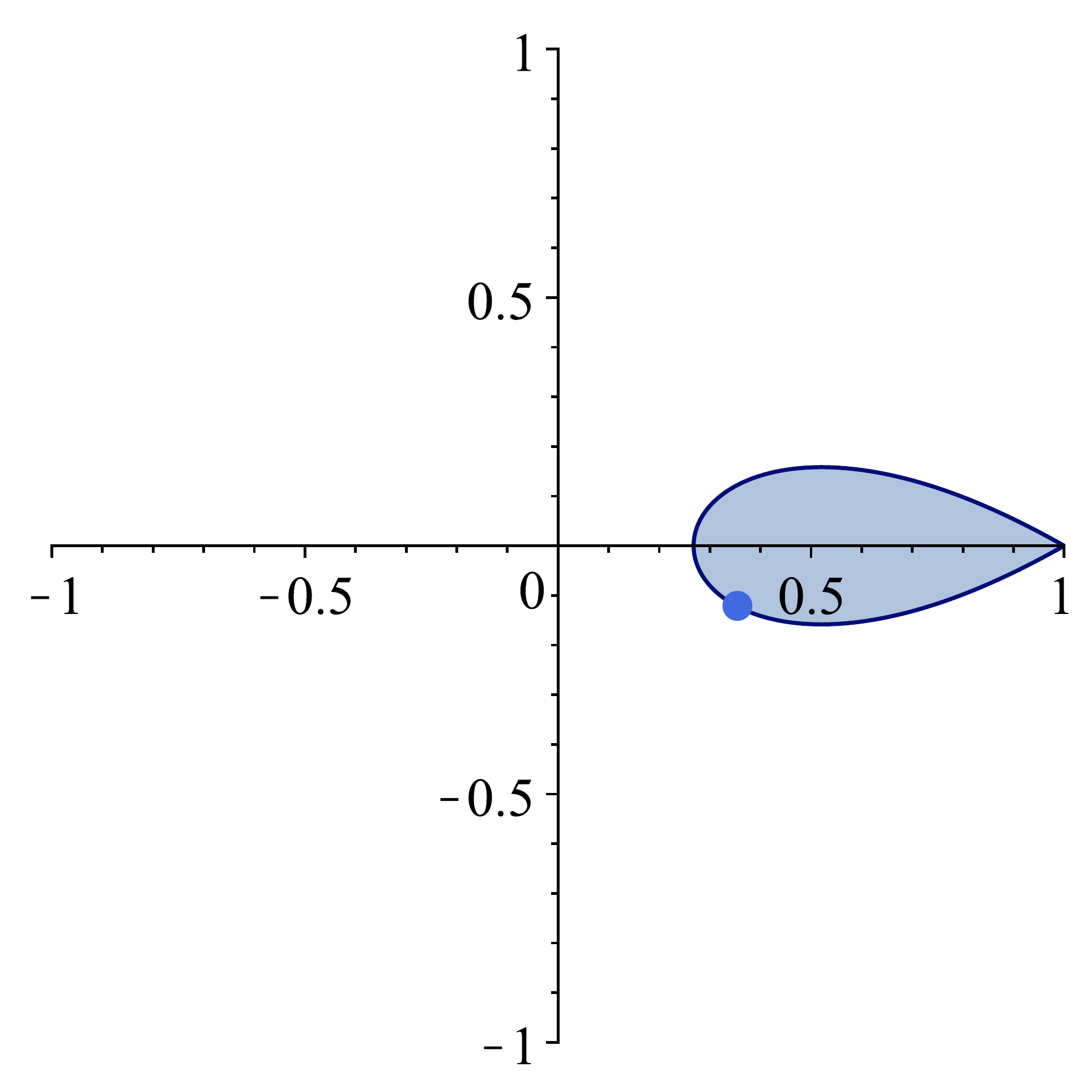}};
	\node at (5,0) {\includegraphics[scale = 0.2]{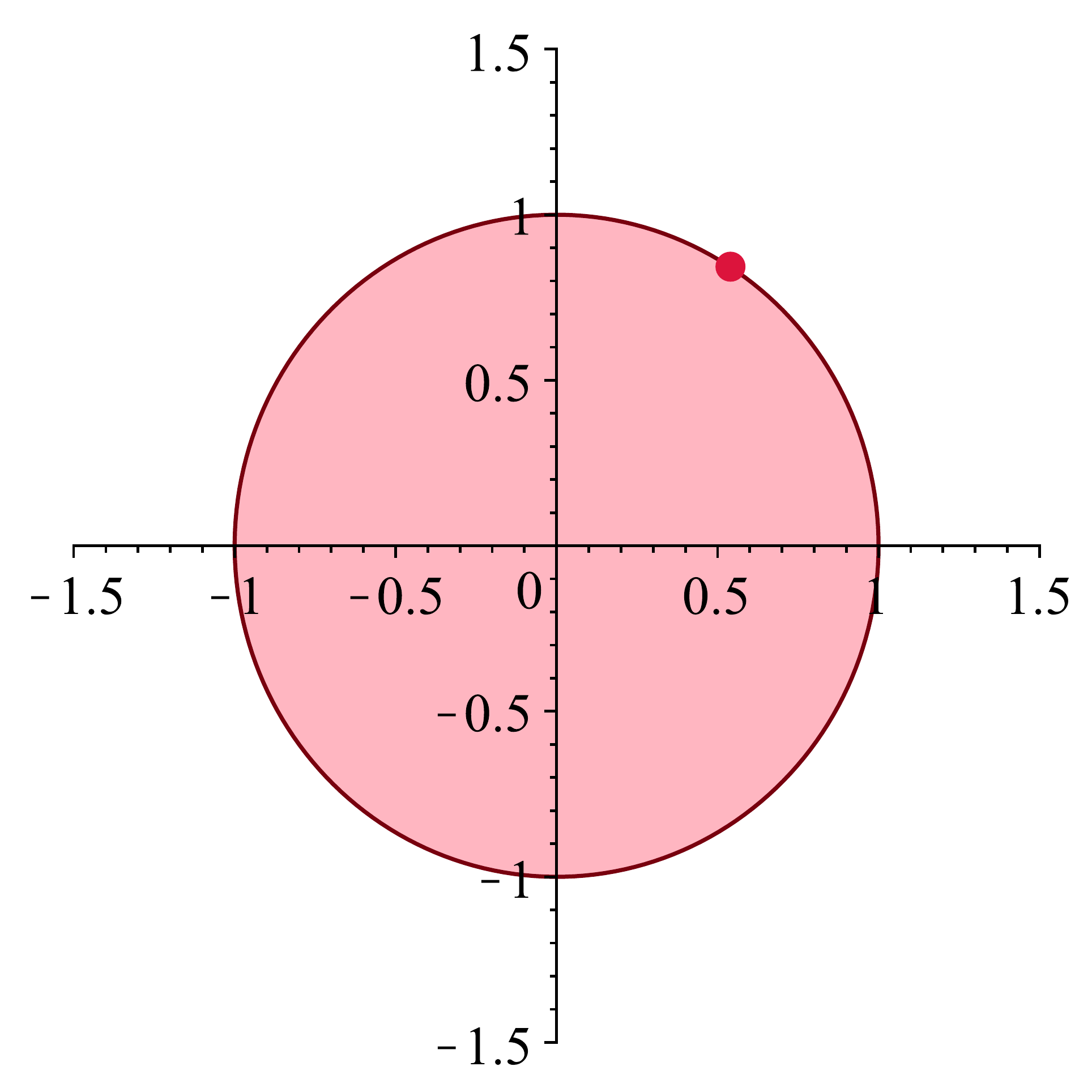}};
	\node at (5,-4) {\includegraphics[scale = 0.2]{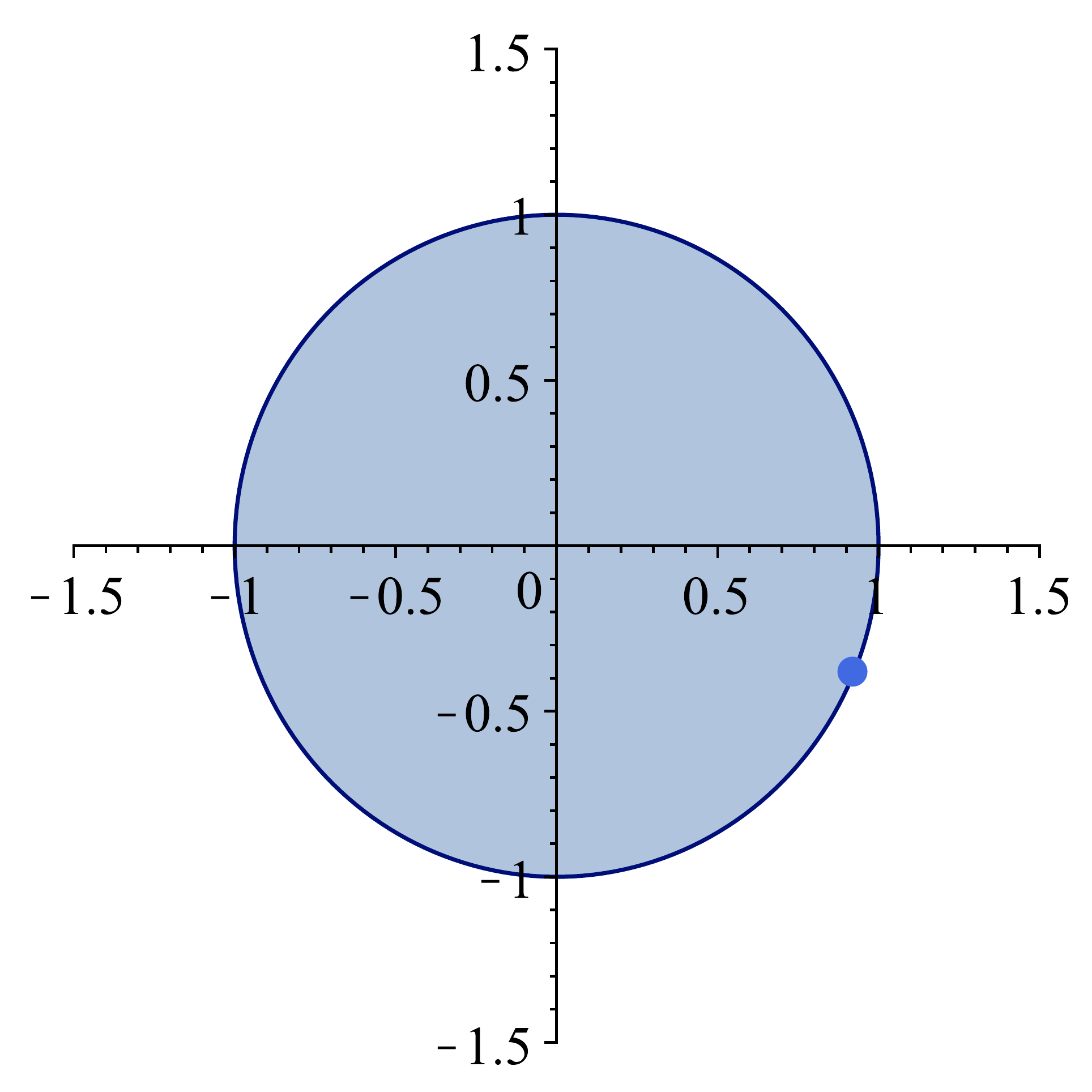}};
	\node at (10,-2) {\includegraphics[scale = 0.2]{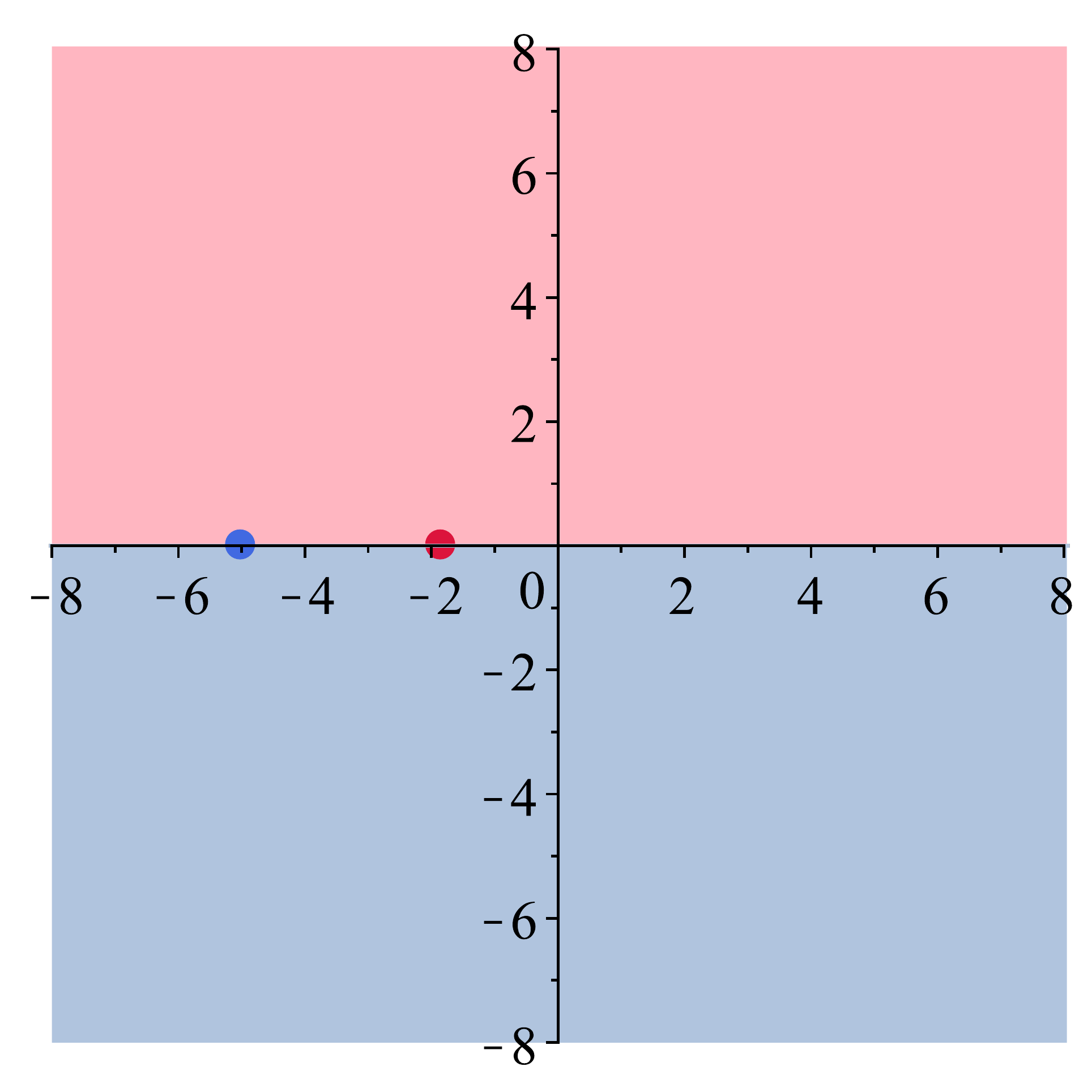}};
	
	\draw[->,thick] (2.1,0)--(3.1,0);
	\draw[->,thick] (2.1,-4)--(3.1,-4);
	\draw[->,thick] (7,0)--(8,-0.5);
	\draw[->,thick] (7,-4)--(8,-3.5);
	
	\node at (2.6,-0.25) {$\pi_1(z)$};
	\node at (2.6,-3.75) {$\pi_2(z)$};
	\node at (7.3,-0.5) {$\phi(z)$};
	\node at (7.1,-3.5) {$-\phi(z)$};
	
	\draw[->,dashed,gray] (8.95,-2.1) .. controls (10,-4) and (8,-6) .. (6.2,-4.5);
	\draw[->,dashed,gray] (6.03,-4.53) .. controls (5,-6) and (2,-6) .. (0.75,-4.3);
	\draw[->,dashed,gray] (0.6,-4.2) .. controls (-1.5,-3) and (-1.5,-1).. (-0.2,0.65);
	\draw[->,dashed,gray] (-0.05,0.77) .. controls (1.5,2) and (3.7,2.3) .. (5.6,1.1);
	\draw[->,dashed,gray] (5.75,1.1) .. controls (7.5,2) and (10,0.5) .. (9.65,-1.9);
	\draw[->,dashed,thick] (10-1.1,0.1-2) .. controls (10-0.9,0.3-2) and (10-0.6,0.3-2)..  (10-0.4,0.1-2);
	
	\node at (9.4,-4.7) {\textcolor{gray}{$\phi^{-1}(-z)$}};
	\node at (2.5,-5.8) {\textcolor{gray}{$(\pi_2^{-1})^+(z)$}};
	\node at (-1.5,-2) {\textcolor{gray}{$X(y)$}};
	\node at (2.5,2.1) {\textcolor{gray}{$\pi_1^+(z)$}};
	\node at (8.9,1) {\textcolor{gray}{$\phi(z)$}};
	\node at (10-0.75,0.45-2) {$\alpha$(z)};
	\end{tikzpicture}
	\caption{Some curves and conformal mappings in the model $p_{0,1}=p_{0,-1}=3/8$, $p_{1,0}=p_{-1,0}=1/8$: $\pi_1(z)$ maps conformally $\mathcal{S}_1^+$ (red domain in the top left figure) onto the unit disk; $\pi_2(z)$ maps conformally $\mathcal{S}_2^+$ (blue domain in the bottom left figure) onto the unit disk; $\phi(z)$ (resp.~$-\phi(z)$) maps conformally the unit disk onto the upper (resp.~lower) half-plane; The right figure shows the shift function $\alpha(z)$ constructed by composing $\phi^{-1}(-z)$, $(\pi_2^{-1})^+(z)$, $X(y)$, $\pi_1^+(z)$ and $\phi(z)$ (for example, along the gray dashed arrows in the figure).}
	\label{fig: preliminary conformal mappings}
\end{figure}

The following lemma is implied by the classical Riemann mapping theorem.

\begin{lemma}\label{lem: pi_1}
	Under Assumption~\ref{assumption: monotonic}, for any $r\in\mathcal{S}_1^+\cap\mathbb{R}$, there exists uniquely a conformal mapping $\pi_1$ from $\mathcal{S}_1^+$ onto $\mathcal{C}^+$ such that $\pi_1(r)=0$ and $\pi_1'(r)>0$. 
	Further, for all $z\in\mathcal{S}_1^+$, $\pi_1(z) = \overline{\pi_1(\overline{z})}$ and $\pi_1^+(1)=1$.
\end{lemma}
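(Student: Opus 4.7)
The proof naturally splits into three pieces: existence and uniqueness of the conformal map, the Schwarz-type reflection symmetry, and the identification of the boundary value at $1$.

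For existence and uniqueness, I would first observe that $\mathcal{S}_1$ is a Jordan curve under Assumption~\ref{assumption: monotonic}, since it is the continuous and non-self-intersecting image of the compact unit circle $\mathcal{C}$ under $s\mapsto\eta(s)s$. Thus $\mathcal{S}_1^+$ is a simply connected proper subdomain of $\mathbb{C}$, and since the hypothesis places $r$ in this interior, the classical Riemann mapping theorem yields a unique conformal bijection $\pi_1:\mathcal{S}_1^+\to\mathcal{C}^+$ with $\pi_1(r)=0$ and $\pi_1'(r)>0$.

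For the reflection identity $\pi_1(z)=\overline{\pi_1(\overline{z})}$, I would use that $\mathcal{S}_1$, and hence $\mathcal{S}_1^+$, is symmetric about the real axis by Lemma~\ref{lem: S1-S2}\ref{item: S1-S2-symmetric}. Define $\widetilde{\pi}_1(z):=\overline{\pi_1(\overline{z})}$ on $\mathcal{S}_1^+$; it is conformal from $\mathcal{S}_1^+$ onto $\mathcal{C}^+$. Since $r$ is real, one checks $\widetilde{\pi}_1(r)=\overline{\pi_1(r)}=0$ and $\widetilde{\pi}_1'(r)=\overline{\pi_1'(r)}=\pi_1'(r)>0$, so $\widetilde{\pi}_1$ satisfies the same normalization. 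The uniqueness clause of the Riemann mapping theorem then forces $\widetilde{\pi}_1=\pi_1$.

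For the boundary value, I would first extend $\pi_1$ to a homeomorphism of closures by Carath\'eodory's theorem, applicable since $\mathcal{S}_1$ is a Jordan curve. The symmetry of $\mathcal{S}_1$ about $\mathbb{R}$ combined with the non-self-intersection property implies that $\mathcal{S}_1\cap\mathbb{R}$ consists of exactly two points; one is $1$ (as $\eta(1)=1$), let $x_0$ denote the other. A brief case inspection via Lemma~\ref{lem: S1-S2}\ref{item: position-0-S1-S2} shows $x_0<1$ in every subcase, so $\mathcal{S}_1^+\cap\mathbb{R}=(x_0,1)$ contains $r$. The reflection identity forces $\pi_1$ to map this segment into $(-1,1)$; because the restriction of $\pi_1$ to this segment is real-analytic with complex derivative equal to its real derivative, continuous and nonvanishing, and strictly positive at $r$, it must be strictly positive throughout $(x_0,1)$. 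Consequently $\pi_1$ is an order-preserving diffeomorphism of $(x_0,1)$ onto $(-1,1)$, and the boundary homeomorphism yields $\pi_1^+(x_0)=-1$ and $\pi_1^+(1)=1$.

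The main potential obstacle is simply the case-by-case verification that $x_0<1$ holds uniformly across the trichotomy in Lemma~\ref{lem: S1-S2}\ref{item: position-0-S1-S2}; aside from that, the argument is a textbook combination of the Riemann mapping and Carath\'eodory theorems with the standard Schwarz-reflection uniqueness trick.
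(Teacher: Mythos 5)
Your proof is correct and follows essentially the same route the paper relies on: the paper does not prove this lemma in-line but defers to \cite[Lem.\,5]{HoRaTa-20}, citing exactly the two facts you use, namely that under Assumption~\ref{assumption: monotonic} the curve $\mathcal{S}_1$ is a Jordan curve symmetric with respect to the real axis, after which the Riemann mapping theorem, the reflection-uniqueness trick, and Carath\'eodory's theorem give the three claims. The only slight looseness is asserting that symmetry plus non-self-intersection alone yield $\#(\mathcal{S}_1\cap\mathbb{R})=2$ (a symmetric Jordan curve could a priori meet $\mathbb{R}$ tangentially or along a segment); the clean justification is the root count for $\widehat{K}$ resp.\ $K(x,x)$ in the proof of Lemma~\ref{lem: S1-S2}\ref{item: position-0-S1-S2}, which you do invoke, so the argument stands.
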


A detailed proof of Lemma~\ref{lem: pi_1} can be found in \cite[Lem.\,5]{HoRaTa-20}, since under Assumption~\ref{assumption: monotonic}, $\mathcal{S}_1$ is a Jordan curve and symmetric with respect to the real axis, which is similar to the symmetric case $p_{k,\ell}=p_{\ell,k}$ for all $k,\ell$.

We now construct a conformal mapping $\pi_2$ from $\mathcal{S}_2^+$ onto $\mathcal{C}^+$ with analogous properties to $\pi_1$ and introduce a M\"obius transformation:
\begin{equation*}\label{eq: phi}
	\phi(z) := -i\frac{z+1}{z-1},
\end{equation*}
which maps conformally $\mathcal{C}^+$ onto the upper half-plane $\mathcal{H}^+$. Thus, the composition $\phi\circ\pi_1$ (resp.~$-\phi\circ\pi_2$) maps conformally $\mathcal{S}_1^+$ (resp.~$\mathcal{S}_2^+$) onto $\mathcal{H}^+$ (resp.~$\mathcal{H}^-$).

Carath\'eodory's theorem (see \cite[Thm.\,3.1]{GaMa-05}) ensures that the limit values of such conformal mappings exist and form continuous homeomorphisms on the boundaries of the domains. We then introduce the shift function
\begin{equation}\label{eq: alpha(z)}
	\alpha(z):=\phi\circ\pi_1^+ \circ X\circ(\pi_2^{-1})^+\circ\phi^{-1}(-z),\quad z\in\mathbb{R},
\end{equation}
(see Figure~\ref{fig: preliminary conformal mappings}).

Hereafter in this section, we will consider the following assumption:
\begin{enumerate}[label=(K\arabic{*}),ref={\rm (K\arabic{*})},resume]
	\item\label{assumption: theta_1,theta_2 not=0} The set of weights $\{p_{k,\ell}\}_{k,\ell}$ satisfies \begin{equation*}
		\begin{cases}
		-\sum p_{k,\ell}k\ell < \sum p_{k,\ell}k^2,\\
		-\sum p_{k,\ell}k\ell < \sum p_{k,\ell}\ell^2.
	\end{cases}
	\end{equation*}
\end{enumerate}
We recall from Remark~\ref{rmk: angles theta1, theta2} that Assumption~\ref{assumption: theta_1,theta_2 not=0} is equivalent to $\theta_1,\,\theta_2\not= 0$. The following lemma presents some crucial properties of $\alpha(z)$.

\begin{lemma}\label{lem: alpha(z)}
	Under Assumptions~\ref{assumption: monotonic} and \ref{assumption: theta_1,theta_2 not=0}, $\alpha (z)$ is a one-to-one, strictly increasing and odd function from $\mathbb{R}\cup\{\infty\}$ onto itself. Further, it is analytic and possesses non-vanishing derivatives on $\mathbb{R}$ with the asymptotic behavior as $z\to +\infty$
	\begin{equation}\label{eq: alpha(z) at infty}
		\alpha(z) \sim  \text{const}\cdot z^{\theta_2/\theta_1},
	\end{equation}
	where $\theta_1$, $\theta_2$ are defined in \eqref{eq: theta_1 theta_2}.
\end{lemma}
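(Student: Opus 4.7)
The plan is to verify each assertion by decomposing $\alpha$ into its five factors in \eqref{eq: alpha(z)} and invoking, in turn, Carath\'eodory's theorem, the Schwarz reflection principle, and the classical theory of conformal maps near analytic corners.

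\textbf{Bijectivity, monotonicity, oddness.} The first step is to observe that each factor is a homeomorphism of the relevant boundary: $\phi$ is M\"obius; the boundary extensions $\pi_1^+$ and $(\pi_2^{-1})^+$ are homeomorphisms between $\mathcal{S}_j$ and $\mathcal{C}$ by Carath\'eodory's theorem; and $X:\mathcal{S}_2\to\mathcal{S}_1$ is a homeomorphism by Lemma~\ref{lem: X(y)-Y(x)-on-S1-S2}\ref{item: X(y)-one-to-one}. Composing yields that $\alpha$ is a homeomorphism of $\mathbb{R}\cup\{\infty\}$. I would then track orientations through the chain to establish strict monotonicity: $z\mapsto-z$ and $X$ are orientation-reversing (the latter by Lemma~\ref{lem: S1-S2}\ref{item: direction-S1-S2}, which says $\eta(s)s$ goes counterclockwise around $\mathcal{S}_1$ while $\eta(s)s^{-1}$ goes clockwise around $\mathcal{S}_2$ as $s$ moves counterclockwise along $\mathcal{C}$), whereas $\phi$, $\phi^{-1}$, $\pi_1^+$, $(\pi_2^{-1})^+$ preserve orientation, so the two reversals cancel and $\alpha$ is strictly increasing. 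Oddness should then follow from Schwarz symmetry: the identity $\overline{\phi(w)}=-\phi(\bar w)$ (a direct calculation), together with $\pi_j(\bar z)=\overline{\pi_j(z)}$ (Lemma~\ref{lem: pi_1} and its $\pi_2$ analogue) and $X(\bar y)=\overline{X(y)}$ (from the real coefficients of $K$ and the uniqueness of $X$ on $\mathcal{S}_2$), threaded through \eqref{eq: alpha(z)}, produces $\alpha(-z)=-\alpha(z)$.

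\textbf{Analyticity and non-vanishing derivatives on $\mathbb{R}$.} Each factor extends analytically across the relevant boundary arc, with the sole potential exception of the corner point $1$: the M\"obius factors are meromorphic everywhere; $X$ is analytic in a neighbourhood of $\mathcal{S}_2\setminus\{1\}$ by Lemma~\ref{lem: solution-neigborhood-S2}; and $\pi_1,\pi_2$ extend analytically across $\mathcal{S}_1\setminus\{1\}$ and $\mathcal{S}_2\setminus\{1\}$ by the Schwarz reflection principle, applicable because these arcs are real-analytic away from $1$ (Lemma~\ref{lem: S1-S2-smoothness-angle-at-1}). Since the corner point $1$ corresponds under $\phi$ to $\infty\in\mathbb{R}\cup\{\infty\}$, $\alpha$ is analytic on all of $\mathbb{R}$. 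Non-vanishing of $\alpha'$ then follows factor by factor from the conformality of the Riemann mappings and from Lemma~\ref{lem: X(y)-Y(x)-on-S1-S2}\ref{item: X(y)-derivative}.

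\textbf{Asymptotic behaviour at infinity.} This is the main technical step. As $z\to+\infty$, the target $y=(\pi_2^{-1})^+(\phi^{-1}(-z))$ tends to $1$ along one branch of $\mathcal{S}_2$ and $X(y)$ tends to $1$ along a branch of $\mathcal{S}_1$; the openings of the two wedges at $1$ are $\theta_2$ and $\theta_1$, both strictly positive by Assumption~\ref{assumption: theta_1,theta_2 not=0}. I would localise near $1$ by applying the M\"obius inversion $w\mapsto 1/w$, which sends $\infty\in\mathcal{H}^\pm$ back to $0$ and turns $-\phi\circ\pi_2$ (respectively $\phi\circ\pi_1$) into a conformal map from an analytic wedge at $1$ of opening $\theta_2$ (respectively $\theta_1$) onto a neighbourhood of $0$ in a half-plane. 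The classical boundary-behaviour theorem for conformal maps at analytic corners (of Lichtenstein--Warschawski type) then yields the power-law asymptotics $y-1\sim c_2 z^{-\theta_2/\pi}$ and $X(y)-1\sim c_1 \alpha(z)^{-\theta_1/\pi}$ with nonzero constants. Combining these with the linear relation $X(y)-1\sim\partial_+ X(1)\cdot(y-1)$ at the corner, guaranteed by Lemma~\ref{lem: X(y)-Y(x)-on-S1-S2}\ref{item: X(y)-derivative-at-1}, produces $\alpha(z)\sim\text{const}\cdot z^{\theta_2/\theta_1}$. The chief obstacle lies precisely in justifying this corner expansion: one must verify that the $\mathcal{S}_j$ are analytic arcs with well-defined distinct one-sided tangents at $1$ (already in Lemma~\ref{lem: S1-S2-smoothness-angle-at-1}) so that the Lichtenstein--Warschawski theorem applies, and check that the one-sided derivatives of $X$ at $1$ produce a nonzero proportionality constant in the linear relation above, so that a pure power law rather than a more singular asymptotic emerges.
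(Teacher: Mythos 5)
Your proposal is correct and follows essentially the same route as the paper: factor $\alpha$ through \eqref{eq: alpha(z)}, get bijectivity and monotonicity from Carath\'eodory plus orientation tracking, oddness from the Schwarz symmetries of $\phi$, $\pi_1$, $\pi_2$ and $X$, analyticity away from the corner by reflection across the analytic arcs, and the exponent $\theta_2/\theta_1$ by combining the corner expansions $(z-1)^{\pi/\theta_1}$ and $(z-1)^{\theta_2/\pi}$ (Pommerenke, Thm.~3.11 — the Lichtenstein--Warschawski result you invoke) with the non-vanishing one-sided derivatives of $X$ at $1$. The composition of exponents you carry out matches the paper's computation exactly.
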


\begin{proof}
	Since all the functions $\phi^{-1} |_\mathbb{R}$, $(\pi_2^{-1})^+$, $X|_{\mathcal{S}_2}$, $\pi_1^+$, and $\phi |_\mathcal{C}$ are one-to-one on their domains of definition, then $\alpha(z)$ is also one-to-one on $\mathbb{R}\cup\{\infty\}$. Further, $\pi_1^+(z)$ and $(\pi_2^{-1})^+(z)$  preserve (resp.~$X(y)$ inverses) the orientation as $z$ moves counterclockwise, then $\alpha(z)$ is strictly increasing on $\mathbb{R}$.
	
	Now notice that  $\phi(z)=-\phi(1/z)=-\phi(\overline{z})$ for all $z\in\mathcal{C}$, or equivalently, $\phi^{-1}(z) = 1/\phi^{-1}(-z) = \overline{\phi^{-1}(-z)}$ for all $z\in\mathbb{R}$. Besides, one has:
	\begin{equation*}
		\pi_1^+ \circ X\circ(\pi_2^{-1})^+(\overline{z}) = \overline{\pi_1^+ \circ X\circ(\pi_2^{-1})^+(z)},	
	\end{equation*}
	for all $z\in\mathcal{C}$. Thus, it is easily checked that $\alpha(z)$ is an odd function.
	
	Since $\pi_1^+:\mathcal{S}_1\to\mathcal{C}$ is the extension of the conformal mapping $\pi_1$ to the boundary, and $\mathcal{S}_1\setminus\{1\}$ and $\mathcal{C}\setminus\{1\}$ are analytic and smooth, then $\pi_1^+$ is analytic (see \cite[p.\,186]{Ne-52}) and possesses non-vanishing derivatives on $\mathcal{S}_1\setminus\{1\}$ (see \cite[Thm.\,3.9]{Po-92}). In particular, $\mathcal{S}_1$ admits a corner point of angle $\theta_1$ at $1$, whereas $\mathcal{C}$ is smooth at $1$, then by \cite[Thm.\,3.11]{Po-92}, one has the asymptotic behavior as $z\to 1$:
	\begin{equation*}
		\pi_1^+(z) = 1 + \text{const}\cdot (z-1)^{\pi/\theta_1} + o((z-1)^{\pi/\theta_1}).
	\end{equation*}
	Similarly, $(\pi_2^{-1})^+$ is analytic, possesses non-vanishing derivatives on $\mathcal{C}\setminus\{1\}$, and as $z\to 1$,
	\begin{equation*}
		(\pi_2^{-1})^+(z) = 1 + \text{const}\cdot (z-1)^{\theta_2/\pi} + o((z-1)^{\theta_2/\pi}).
	\end{equation*}
	We recall that by Lemma~\ref{lem: X(y)-Y(x)-on-S1-S2}, $X(y)$ is analytic, possesses non-vanishing derivatives on $\mathcal{S}_2\setminus\{1\}$, and left and right derivatives at $1$. Since $\alpha(z)$ is composed of the above functions and $\phi(z)$, then $\alpha(z)$ is also analytic, possesses non-vanishing derivatives on $\mathbb{R}$, and admits the asymptotic behavior \eqref{eq: alpha(z) at infty} as $z\to \infty$.
\end{proof}

We would like to specify some facts about the shift function $\alpha(z)$ in the symmetric case in \cite{HoRaTa-20}, where $\mathcal{S}_1$ and $\mathcal{S}_2$ are Jordan curves and coincide, and $X(y)=\overline{y}$ for all $y\in\mathcal{S}_2$. As a result, the shift function $\alpha(z)$ degenerates into the identity on $\mathbb{R}$ in the symmetric case. We notice that the assumptions \ref{assumption: monotonic} and \ref{assumption: theta_1,theta_2 not=0} always hold true in this case. Hence, by also considering the assumptions \ref{assumption: monotonic} and \ref{assumption: theta_1,theta_2 not=0} in the non-symmetric case, we restrict our analysis to models that share some critical properties with the symmetric one. 

\subsection{Conformal welding problem with quasisymmetric shift}\label{subsec: conformal welding}
This subsection aims at solving the following BVP: Find mappings $\chi_1$ and $\chi_2$ such that $\chi_1$ and $\chi_2$ map respectively $\mathcal{H}^+$ and $\mathcal{H}^-$ conformally onto two disjoint, complementary domains, satisfying the boundary condition
\begin{equation*}
	\chi_1^+(\alpha(z)) = \chi_2^-(z),
\end{equation*}
for all $z\in\mathbb{R}$. We will show that the shift function $\alpha(z)$ is quasisymmetric under some assumptions.

\begin{figure}[t]
	\centering
	\begin{tikzpicture}
	\node at (0,0) {\includegraphics[scale=0.2]{Fig-halfplan-eps-converted-to.pdf}};
	\node at (5,0) {\includegraphics[scale=0.2]{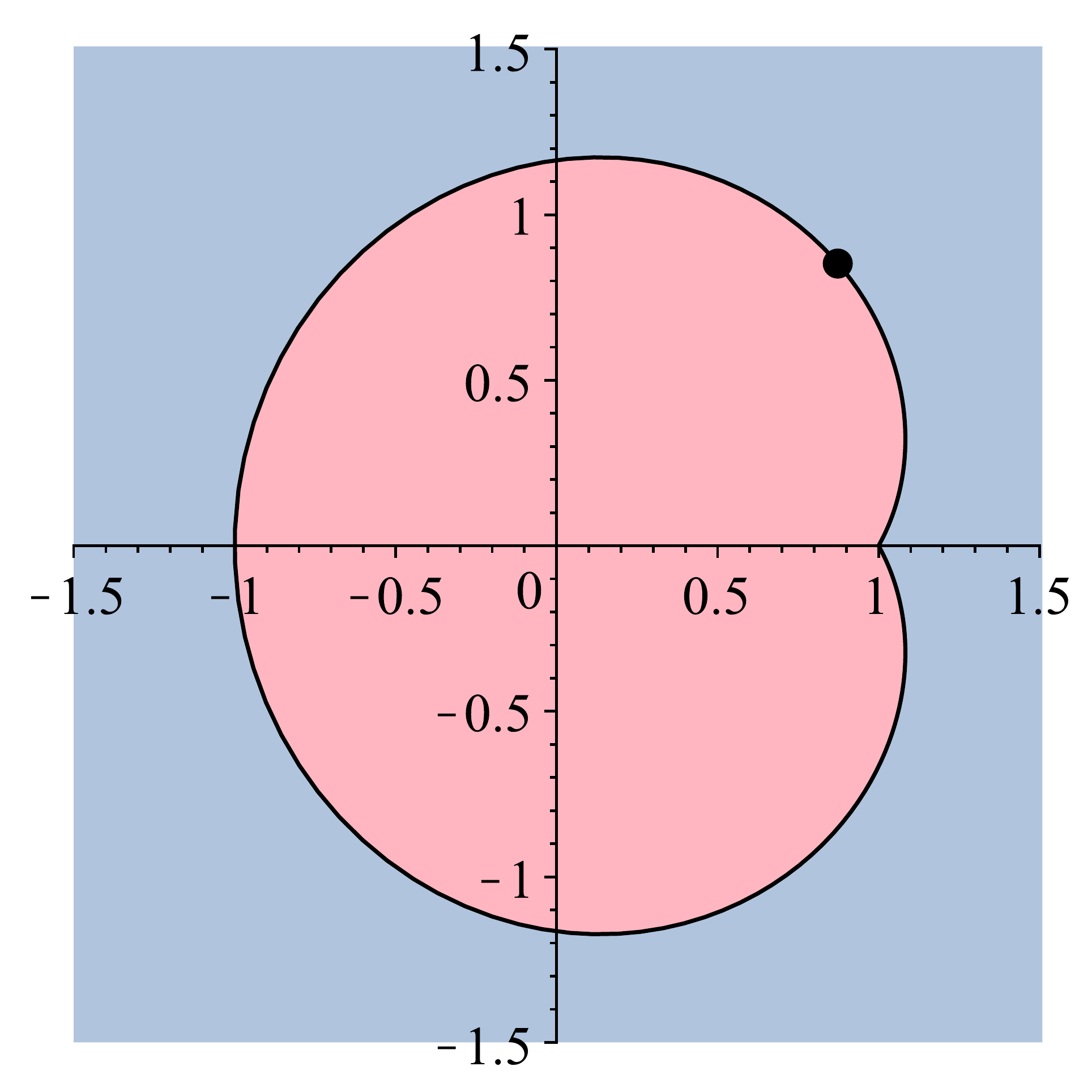}};
	\node at (10,0) {\includegraphics[scale=0.2]{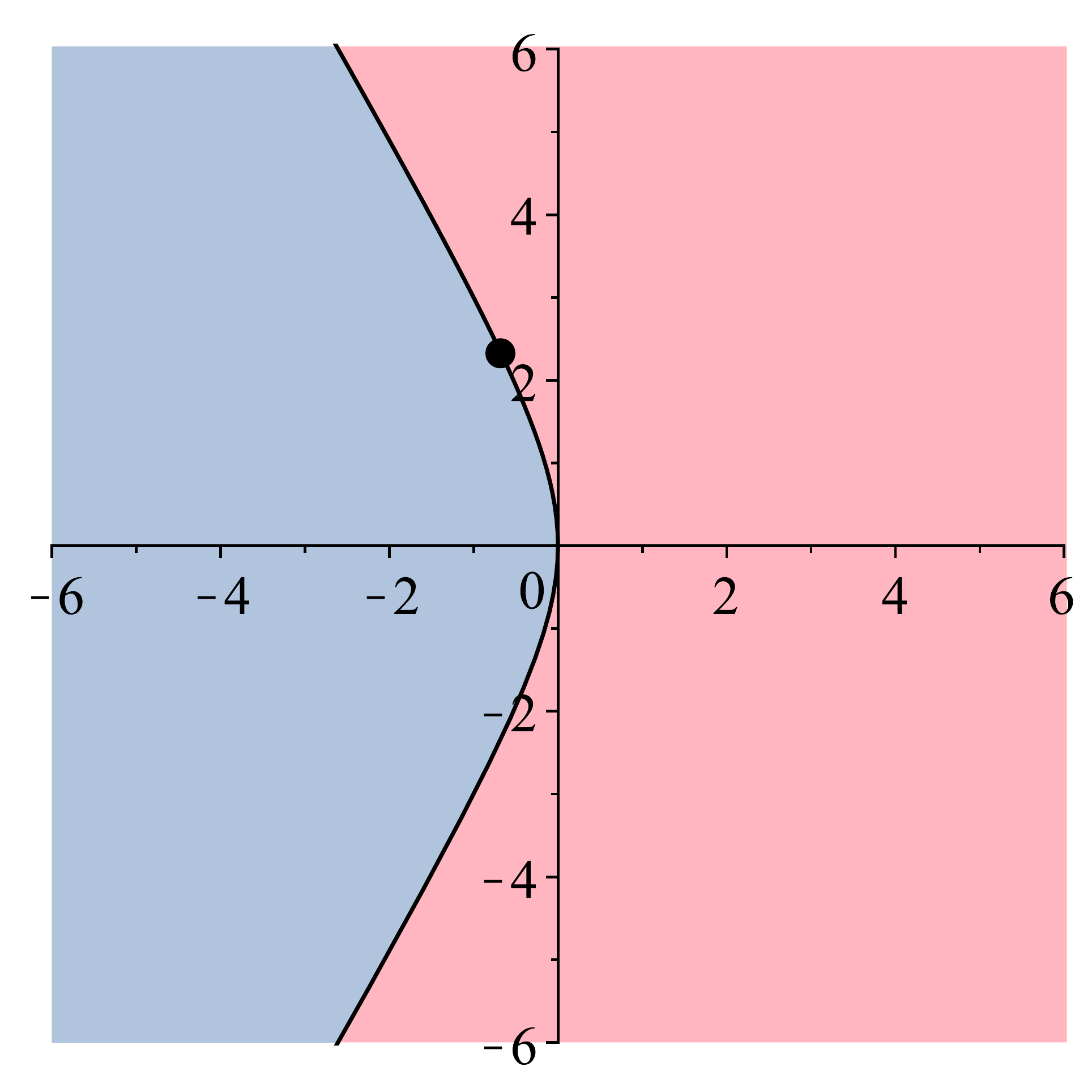}};
	
	\draw[->,thick] (1,-1.3).. controls (2,-2) and (3,-2).. (4,-1.3);
	\draw[->,thick] (1,1.3) .. controls (2.2,2) and (3.8,1.5).. (4.5,0.7);
	\draw[->,thick] (7,0) -- (8,0);
	\draw[->,dashed] (-1.1,0.1) .. controls (-0.9,0.3) and (-0.6,0.3)..  (-0.4,0.1); 
	
	\node at (2.7,1.9) {$\chi_1(z)$};
	\node at (2.7,-2.1) {$\chi_2(z)$};
	\node at (7.5,0.3) {$\phi(z)$};
	\node at (-0.75,0.45) {$\alpha$(z)};
	\end{tikzpicture}
	\caption{Some curves and conformal mappings concerning the BVP with shift in the model $p_{0,1}=p_{0,-1}=3/8$, $p_{1,0}=p_{-1,0}=1/8$: the first figure shows the shift function $\alpha(z)$ on $\mathbb{R}$; the second figure shows the Jordan curve $\mathcal{G}$ constructed in Lemma~\ref{lem: chi1-chi2-Jordan curve}; $\chi_1(z)$ maps conformally $\mathcal{H}^+$ onto $\mathcal{G}^+$, $\chi_2(z)$ maps conformally $\mathcal{H}^-$ onto $\mathcal{G}^-$; for any $z\in\mathbb{R}$ (blue point), its image (black point in the second figure) under $\chi_2^-$ coincides with the image of $\alpha(z)$ (red point) under $\chi_1^+$; the third figure shows the images of $\mathcal{G}$, $\mathcal{G}^+$ and $\mathcal{G}^-$ under the M\"obius transformation $\phi$ defined in \eqref{eq: phi}.}
	\label{fig: conformal welding}
\end{figure}

We first recall the definition of a quasisymmetric function on the real line. A strictly increasing function $f$ defined on the real line is quasisymmetric if and only if there exists $k>0$ such that
\begin{equation}\label{eq: quasisymmetric}
	\frac{1}{k} \leq \frac{f(x+t)-f(x)}{f(x)-f(x-t)} \leq k,
\end{equation}
for all $x\in\mathbb{R}$ and $t>0$.

The following lemma presents a result of quasisymmetric functions.
\begin{lemma}\label{lem: quasisymmetric-power-func}
	For all $p>0$, the function
	\begin{equation}\label{eq: power_func}
	P(x) := \sign(x)\vert x\vert ^p
	\end{equation}
	is quasisymmetric on $\mathbb{R}$.
\end{lemma}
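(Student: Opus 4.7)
The plan is to exploit the homogeneity and the odd symmetry of $P$ to reduce the two-parameter ratio in \eqref{eq: quasisymmetric} to a single continuous function of one variable, and then conclude by a compactness argument.

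First I would note that $P$ is odd and positively homogeneous of degree $p$: for every $\lambda>0$ one has $P(\lambda x)=\lambda^{p}P(x)$, since $\sign(\lambda x)=\sign(x)$ and $|\lambda x|^{p}=\lambda^{p}|x|^{p}$. Setting
\begin{equation*}
R(x,t):=\frac{P(x+t)-P(x)}{P(x)-P(x-t)},\quad x\in\mathbb{R},\; t>0,
\end{equation*}
the homogeneity immediately gives $R(\lambda x,\lambda t)=R(x,t)$, so it suffices to bound the one-variable function $g(u):=R(u,1)=\frac{P(u+1)-P(u)}{P(u)-P(u-1)}$ uniformly on $\mathbb{R}$ from above and from below by positive constants.

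Next I would check that $g$ is continuous and strictly positive on $\mathbb{R}$. Because $P$ is continuous and strictly increasing on $\mathbb{R}$, both the numerator $P(u+1)-P(u)$ and the denominator $P(u)-P(u-1)$ are continuous and strictly positive for all $u\in\mathbb{R}$; in particular the denominator never vanishes. Hence $g$ is continuous and positive on the whole real line, and also on the cases where the arguments cross $0$ (where $\sign$ jumps) since $P$ itself is continuous at $0$.

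The final step is the behavior at infinity. For $u\to+\infty$, $P(u\pm 1)=(u\pm 1)^{p}=u^{p}\bigl(1\pm p/u+O(u^{-2})\bigr)$, so both $P(u+1)-P(u)$ and $P(u)-P(u-1)$ are equivalent to $p u^{p-1}$, giving $g(u)\to 1$. For $u\to-\infty$ one may either redo the asymptotic expansion or, more cleanly, use the identity $g(-u)=1/g(u)$, which follows from $P(-v)=-P(v)$ by a direct substitution, and which again yields $g(u)\to 1$. Since $g$ is continuous and positive on $\mathbb{R}$ with $\lim_{u\to\pm\infty}g(u)=1$, the extended function on $\mathbb{R}\cup\{\pm\infty\}$ is continuous and positive on a compact set, so it attains a positive minimum and a finite maximum. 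Taking $k:=\max\bigl(\sup_{u}g(u),\,1/\inf_{u}g(u)\bigr)\geq 1$ finishes the proof via the homogeneity reduction. There is no real obstacle here; the only point requiring minor care is continuity of $g$ across the values of $u$ where $u-1$, $u$ or $u+1$ crosses $0$, which is handled by the continuity of $P$ itself.
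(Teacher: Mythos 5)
Your proof is correct and follows essentially the same route as the paper: both reduce the two-variable ratio to the single-variable function $g(z)=\frac{P(z+1)-P(z)}{P(z)-P(z-1)}$ via the $p$-homogeneity of $P$ (the paper does this by dividing numerator and denominator by $t^p$), then observe that $g$ is continuous, strictly positive, and tends to $1$ at $\pm\infty$, and conclude by boundedness. Your treatment of the limit at infinity (the explicit expansion and the identity $g(-u)=1/g(u)$) is, if anything, slightly more detailed than the paper's.
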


\begin{proof}
	We first notice that
	\begin{equation*}
		\lim_{t\to 0} \frac{P(x+t)-P(x)}{P(x)-P(x-t)} = 1,
	\end{equation*}
	for all $x\in \mathbb{R}$. Now for all $(x,t)\in\mathbb{R}\times\mathbb{R}^+$,
	\begin{equation*}
		\frac{P(x+t)-P(x)}{P(x)-P(x-t)} = \frac{\sign(x/t+1)\vert x/t +1\vert^p - \sign(x/t)\vert x/t \vert^p }{\sign(x/t)\vert x/t \vert^p - \sign(x/t-1)\vert x/t -1\vert^p}.
	\end{equation*}
	Since the function
	\begin{equation*}
		\frac{\sign(z+1)\vert z +1\vert^p - \sign(z)\vert z \vert^p }{\sign(z)\vert z \vert^p - \sign(z-1)\vert z -1\vert^p}
	\end{equation*}
	is continuous, strictly positive on $\mathbb{R}$, and converges to $1$ as $z$ tends to $\pm\infty$, then it is bounded in $\mathbb{R}^+$ and thus, there exists a suitable number $k>0$ satisfying
	\begin{equation*}
		\frac{1}{k}\leq \frac{P(x+t)-P(x)}{P(x)-P(x-t)}\leq k,
	\end{equation*}
	for all $(x,t)\in\mathbb{R}\times\mathbb{R}^+$. The proof is then complete.
\end{proof}

To show that $\alpha(z)$ is quasisymmetric, we introduce the following assumption:
\begin{enumerate}[label=(K\arabic{*}),ref={\rm (K\arabic{*})},resume]
	\item\label{assumption: alpha'(z)} $\alpha'(z) = \text{const}\cdot \vert z\vert^q + o(z^q) $ as $z\to\pm\infty$, with $q\in\mathbb{R}$.
\end{enumerate}
Assumption~\ref{assumption: alpha'(z)} is particularly technical as it is stated for the shift function $\alpha(z)$, which is difficult to verify, instead of stated for the weights $\{p_{k,\ell}\}_{k,\ell}$ or the curves $\mathcal{S}_1$ and $\mathcal{S}_2$. Therefore, we also introduce in Lemma~\ref{lem: alpha'(z) moment order 4} a sufficient condition on the moments of the weights $\{p_{k,\ell}\}_{k,\ell}$, under which Assumption~\ref{assumption: alpha'(z)} always holds true.

\begin{lemma}\label{lem: alpha'(z) moment order 4}
	Assume that
		\begin{enumerate}[label=(H\arabic{*}),ref={\rm (H\arabic{*})}]\setcounter{enumi}{6}
			\item\label{assumption: H7 moment order 4} The set of weights $\{p_{k,\ell}\}_{k,\ell}$ has finite moments of order 4, i.e., $\sum p_{k,\ell}k^4 + \sum p_{k,\ell}\ell^4<\infty$, and  the one-sided second order derivatives of $X(y)$ at $1$, which are fully characterized by the moments of $\{p_{k,\ell}\}_{k,\ell}$ up to order 3, do not vanish.
		\end{enumerate}
	Then under Assumptions~\ref{assumption: monotonic}, \ref{assumption: theta_1,theta_2 not=0}, and \ref{assumption: H7 moment order 4}, we have
	\begin{equation*}
		\alpha'(z)\sim \text{const}\cdot\vert z\vert^{\theta_2/\theta_1-1},
	\end{equation*}
	as $z\to \pm\infty$, where $\theta_1$, $\theta_2$ are defined in \eqref{eq: theta_1 theta_2}.
\end{lemma}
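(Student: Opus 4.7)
The plan is to apply the chain rule to
\[
\alpha(z) = \phi \circ \pi_{1}^{+} \circ X \circ (\pi_{2}^{-1})^{+} \circ \phi^{-1}(-\,\cdot)
\]
and identify the leading asymptotic behaviour of each factor as $z\to\pm\infty$. By Lemma~\ref{lem: alpha(z)}, $\alpha(z)$ diverges, so every intermediate point tends to the common corner $1$. The two outer factors are handled by explicit computation: $\phi^{-1}(-z)=(z+i)/(z-i)$, so $\phi^{-1}(-z)-1\sim 2i/z$ and $(\phi^{-1})'(-z)\sim 2i/z^{2}$, while $\phi'(u)=2i/(u-1)^{2}$.

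For the two conformal maps at the corner, I would invoke a differentiable refinement of the Warschawski expansion already used in the proof of Lemma~\ref{lem: alpha(z)} (in the spirit of \cite[Chap.\,3]{Po-92}). Its hypotheses require $C^{3}$-smoothness of the boundary curve away from the corner, which is precisely what the fourth-moment condition in \ref{assumption: H7 moment order 4} buys: $K$ becomes $C^{4}$ near $(1,1)$, hence the parameterisations $s\mapsto\eta(s)s$ and $s\mapsto\eta(s)s^{-1}$ are sufficiently smooth on $\mathcal{C}\setminus\{1\}$. One then obtains, for nonzero constants $c_{1},c_{2}$,
\[
(\pi_{1}^{+})'(v) \sim c_{1}\tfrac{\pi}{\theta_{1}}(v-1)^{\pi/\theta_{1}-1}, \qquad ((\pi_{2}^{-1})^{+})'(w) \sim c_{2}\tfrac{\theta_{2}}{\pi}(w-1)^{\theta_{2}/\pi-1}.
\]
For the middle factor, differentiating $K(X(y),y)=0$ and using the $C^{4}$-regularity of $K$ together with the non-vanishing one-sided second derivatives from \ref{assumption: H7 moment order 4} yields $X'(y)=X'(1^{\pm})+O(\vert y-1\vert)$, where $X'(1^{\pm})\neq 0$ by Lemma~\ref{lem: S1-S2-smoothness-angle-at-1}.

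Tracking which intermediate point sits in which argument, one has $\phi^{-1}(-z)-1\sim 2i/z$, then $(\pi_{2}^{-1})^{+}\circ\phi^{-1}(-z)-1\sim\text{const}\cdot z^{-\theta_{2}/\pi}$, and finally $\pi_{1}^{+}\circ X\circ(\pi_{2}^{-1})^{+}\circ\phi^{-1}(-z)-1\sim\text{const}\cdot z^{-\theta_{2}/\theta_{1}}$ (which recovers Lemma~\ref{lem: alpha(z)}). Substituting into the chain rule, the five derivative factors contribute the exponents
\[
2\tfrac{\theta_{2}}{\theta_{1}} \;+\; \Bigl(\tfrac{\theta_{2}}{\pi}-\tfrac{\theta_{2}}{\theta_{1}}\Bigr) \;+\; 0 \;+\; \Bigl(1-\tfrac{\theta_{2}}{\pi}\Bigr) \;+\; (-2) \;=\; \tfrac{\theta_{2}}{\theta_{1}}-1,
\]
so $\alpha'(z)\sim\text{const}\cdot z^{\theta_{2}/\theta_{1}-1}$ as $z\to+\infty$. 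Oddness of $\alpha$ (Lemma~\ref{lem: alpha(z)}) forces $\alpha'$ to be even, yielding the stated $\vert z\vert^{\theta_{2}/\theta_{1}-1}$ behaviour for $z\to-\infty$ as well.

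The main obstacle is the \emph{differentiable} corner asymptotic for $\pi_{1}^{+}$ and $(\pi_{2}^{-1})^{+}$: the Warschawski-type expansion used in the proof of Lemma~\ref{lem: alpha(z)} controls the conformal map itself but not its derivative, so one needs a strengthened statement (Lehman-type) whose applicability is secured precisely by the higher boundary smoothness coming from \ref{assumption: H7 moment order 4}. Verifying these hypotheses in our geometric setting, and keeping track of which one-sided branch of $X$ is picked up depending on the sign of $z$, is where the bulk of the technical work lies.
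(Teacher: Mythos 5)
Your proposal is correct and follows essentially the same route as the paper: chain rule through the five-fold composition, with the key technical input being derivative-level corner asymptotics for $\pi_1^+$ and $(\pi_2^{-1})^+$ secured by the extra boundary regularity that the fourth-moment assumption provides (the paper makes this precise via Wigley's theorems \cite[Thms.\,1 and 3]{Wi-65}, which are exactly the Lehman-type refinement you anticipate), together with the non-vanishing one-sided data of $X$ at $1$. Your exponent bookkeeping matches the paper's conclusion.
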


\begin{proof}
	By differentiating three times the identity $K(X(y),y)=0$ and evaluating it as $y\to 1$, it is easily seen that the one-sided second order derivatives of $X(y)$ at $1$ are characterized by the moments of $\{p_{k,\ell}\}_{k,\ell}$ up to order 3. By the assumption, we then have
	\begin{equation*}
		X'(y)\sim \text{const}\cdot (y-1) + o(y-1),
	\end{equation*}
	as $y\to 1$.
		
	We now study the asymptotic behavior of the derivatives of $\pi_2^{-1}(z)$ as $z\to 1$. By the assumption on the moments of order 4, the parametrization of $\mathcal{S}_2$, which is $s\mapsto \eta(s)s^{-1}$ on $\mathcal{C}$, admits finite one-sided derivatives up to order 3 at $s=1$. Hence, the mapping $s\mapsto \eta(s)s^{-1}$ and its first and second derivatives are Lipschitz continuous on $\mathcal{C}$. Recall that as $z\to 1$,
	\begin{equation*}
		\pi_2^{-1}(z) = 1 + \text{const}\cdot (z-1)^{\theta_2/\pi} + o\left(  (z-1)^{\theta_2/\pi} \right).
	\end{equation*}
	Applying a result from \cite[Theorem~1]{Wi-65}, one may obtain the expansion of $(\pi_2^{-1})'$ by differentiating formally the expansion of $\pi_2^{-1}$:
	\begin{equation*}
		\pi_2^{-1}(z)' \sim \text{const}\cdot (z-1)^{\theta_2/\pi - 1} + o\left(  (z-1)^{\theta_2/\pi-1} \right),
	\end{equation*}
	as $z\to 1$. We remark that the result in \cite[Theorem~1]{Wi-65} is stated for conformal mappings from the upper half-plane onto a domain with a corner point at $0$, such that $0$ is mapped to $0$. However, it can be also applied to the current context, as one can use translation and M\"obius transformation to return to the original setting of the theorem. We also obtain a similar result for the mapping $\pi_1^{-1}$ and deduce the expansion of the derivatives of its inverse by \cite[Theorem~3]{Wi-65}:
	\begin{equation*}
		\pi_1(z)'\sim \text{const}\cdot (z-1)^{\pi/\theta_1 - 1} + o\left(  (z-1)^{\pi/\theta_1-1} \right),
	\end{equation*}
	as $z\to 1$. Since $\alpha(z)$ is the composition of $\phi(z)$, $\pi_1^+(z)$, $X(y)$, $(\pi_2^{-1})^+(z)$, and $\phi^{-1}(-z)$, all of which have the asymptotic expansions for the derivatives, then we also obtain the asymptotics for $\alpha'(z)$:
	\begin{equation*}
		\alpha'(z)\sim\text{const}\cdot \vert z\vert^{\theta_2/\theta_1-1},
	\end{equation*}
	as $z\to\pm\infty$.
\end{proof}

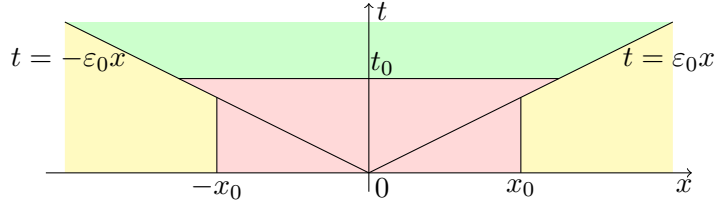
\begin{figure}[t]
	\centering
	\begin{tikzpicture}[scale=0.5]
	\fill[green!20] (-8,4)--(-5,2.5)--(5,2.5)--(8,4);
	\fill[yellow!30] (-8,4)--(-4,2)--(-4,0)--(-8,0);
	\fill[yellow!30] (8,4)--(4,2)--(4,0)--(8,0);
	\fill[red!15] (-5,2.5)--(-4,2)--(-4,0)--(4,0)--(4,2)--(5,2.5);
	
	\draw[->,color=black] (-8.5,0) -- (8.5,0);
	\draw[->,color=black] (0,-0.5) -- (0,4.5);
	
	\draw (0,0)--(8,4);
	\draw (0,0)--(-8,4);
	\draw (-5,2.5)--(5,2.5);
	\draw (-4,0)--(-4,2);
	\draw (4,0)--(4,2);
	
	\node at (8.3,-0.35) {$x$};
	\node at (0.35,4.3) {$t$};
	\node at (0.35,-0.4) {$0$};
	\node at (4,-0.4) {$x_0$};
	\node at (-4,-0.4) {$-x_0$};
	\node at (0.35,2.9) {$t_0$};
	\node at (7.9,3) {$t=\epsilon_0 x$};
	\node at (-7.9,3) {$t=-\epsilon_0 x$};
	\end{tikzpicture}
	\caption{The region $\mathbb{R}\times\mathbb{R}^+$ is divided into smaller domains: $R_1\cap\{(x,t)\in\mathbb{R}\times\mathbb{R}^+:t\geq t_0\}$ (in green), $R_2\cap\{(x,t)\in\mathbb{R}\times\mathbb{R}^+:\vert x\vert \geq x_0\}$ (in yellow), and $R_3$ (in red).}
	\label{fig: RxR^+}
\end{figure}

\begin{lemma}\label{lem: alpha quasisymmetric}
	Under Assumptions~\ref{assumption: monotonic}--\ref{assumption: alpha'(z)}, $\alpha(z)$ is a quasisymmetric function from $\mathbb{R}$ onto itself.
\end{lemma}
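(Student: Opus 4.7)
The plan is to produce a universal constant $k$ such that
\[ R(x,t) := \frac{\alpha(x+t)-\alpha(x)}{\alpha(x)-\alpha(x-t)} \]
stays in $[1/k, k]$ on all of $\mathbb{R}\times\mathbb{R}^+$. The key reduction is the odd-ness of $\alpha$ (Lemma \ref{lem: alpha(z)}), which yields $R(-x,t)=1/R(x,t)$ and reduces the problem to proving only an upper bound on $R$. Because $\alpha\in C^1(\mathbb{R})$ with $\alpha'>0$, the ratio $R$ is continuous and strictly positive on $\mathbb{R}\times(0,\infty)$ and extends continuously to $t=0$ with value $1$ by a first-order Taylor expansion, so only the asymptotic regime $|x|+t\to\infty$ can obstruct a uniform bound.

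I would follow the geometric partition of Figure \ref{fig: RxR^+}, splitting $\mathbb{R}\times\mathbb{R}^+$ into the bounded red rectangle $R_3$, the yellow region $R_2=\{|x|\ge x_0,\ t\le\varepsilon_0|x|\}$, and the green region $R_1=\{t\ge t_0,\ t\ge\varepsilon_0|x|\}$, with $x_0=t_0/\varepsilon_0$ and $\varepsilon_0\in(0,1/2)$. On $R_3$, continuity on a compact set gives the bound immediately. On $R_2$, the mean value theorem expresses $R(x,t)=\alpha'(\xi_1)/\alpha'(\xi_2)$ with $\xi_1,\xi_2\in[x-t,x+t]$; since $|x\pm t|\in[(1-\varepsilon_0)|x|,(1+\varepsilon_0)|x|]$, Assumption \ref{assumption: alpha'(z)} gives $\alpha'(\xi_i)\asymp |x|^q$ and hence a bound on $R$ depending only on $\varepsilon_0$ and $q$, provided $x_0$ is large enough that the $o(z^q)$ remainder is dominated by the leading term.

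The substance of the proof lies in the green region $R_1$, where $t$ is large but $s:=x/t$ ranges over the compact interval $[-1/\varepsilon_0, 1/\varepsilon_0]$. Integrating Assumption \ref{assumption: alpha'(z)} once and using $\alpha(0)=0$, one obtains
\[
\alpha(z) = c_0\,P(z) + \phi(z),\qquad P(z):=\sign(z)|z|^p,\ p:=q+1=\theta_2/\theta_1>0,
\]
with $c_0>0$, $\phi$ continuous, and $\phi(z)=o(|z|^p)$ at infinity. Using the homogeneity $P(tz)=t^p P(z)$ for $t>0$, the ratio rewrites as
\[
R(x,t) = \frac{[P(s+1)-P(s)] + c_0^{-1}t^{-p}[\phi(t(s+1))-\phi(ts)]}{[P(s)-P(s-1)] + c_0^{-1}t^{-p}[\phi(ts)-\phi(t(s-1))]},
\]
whose formal $t\to\infty$ limit is the quasisymmetry ratio of $P$ at $s$, bounded over $s\in[-1/\varepsilon_0,1/\varepsilon_0]$ by Lemma \ref{lem: quasisymmetric-power-func}; the denominator $P(s)-P(s-1)$ is also bounded below by a positive constant, by strict monotonicity of $P$ on a compact $s$-range.

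The main obstacle is upgrading this pointwise limit to a uniform bound, i.e.\ showing that $t^{-p}\phi(tz)\to 0$ uniformly for $z$ in the compact window $[-1-1/\varepsilon_0,1+1/\varepsilon_0]$ as $t\to\infty$; a single estimate will not suffice, because when $s$ lies near $0$ or $\pm 1$ one of the arguments $ts$, $t(s\pm 1)$ may stay bounded while the others escape to infinity. I would resolve this with a two-scale split: for $|tz|\le U$ the continuity of $\phi$ bounds $|\phi(tz)|$ by a constant and division by $t^p\to\infty$ kills the term; for $|tz|>U$ the hypothesis $\phi(z)=o(|z|^p)$ yields $|\phi(tz)|\le\varepsilon\,t^p|z|^p$, arbitrarily small for $U$ large, uniformly in $z$. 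Gluing the three regional estimates produces a universal upper bound on $R$, which combined with the odd-ness reduction completes the proof.
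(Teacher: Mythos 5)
Your proposal is correct and follows essentially the same route as the paper: the same decomposition $\alpha=c_0P+\phi$ with $P(z)=\sign(z)|z|^p$, the same three-region partition of $\mathbb{R}\times\mathbb{R}^+$ from Figure~\ref{fig: RxR^+}, the same two-scale argument to make $t^{-p}\phi(t\cdot)$ uniformly small in the region $t\geq\epsilon_0|x|$, and the same derivative-ratio estimate via Assumption~\ref{assumption: alpha'(z)} in the region $t\leq\epsilon_0|x|$. Your only departures are cosmetic but welcome: working with the ratio $R$ instead of the operator $F(\alpha)(x,t,k)$, and using oddness of $\alpha$ to get $R(-x,t)=1/R(x,t)$ so that a single-sided bound suffices.
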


\begin{proof}
	Without loss of generality, we omit the constant coefficients of $\alpha(z)$ in \eqref{eq: alpha(z) at infty} and $\alpha' (z)$ in Assumption~\ref{assumption: alpha'(z)}. We rewrite $\alpha(z)$ as
	\begin{equation*}
		\alpha(z) = P(z) + r(z),
	\end{equation*}
	where $P(z)=\sign(z)\vert z\vert ^p$, $p:=\theta_2/\theta_1$ and $r(x) = o(\vert x\vert ^p)$ as $x\to\pm\infty$. We now prove that there exist $k_1\in (0,1)$ and $k_2>1$ such that
	\begin{equation}\label{eq: quasisymmetric operator}
		F(\alpha)(x,t,k_2)\leq 0 \leq F(\alpha)(x,t,k_1),
	\end{equation}
	for all $(x,t)\in\mathbb{R}\times\mathbb{R}^+$, where $F(f)$ is an operator acting on any function $f:\mathbb{R}\to\mathbb{R}$ and defined as follows:
	\begin{equation}
		F(f)(x,t,k):= f(x+t)-f(x) - k[f(x)-f(x-t)].
	\end{equation}
	One can rewrite $F(\alpha)$ as
	\begin{equation}\label{eq: F(alpha)(x,t,k)}
		F(\alpha)(x,t,k) = F(P)(x,t,k) + F(r)(x,t,k)= t^p\left[ F(P)(\frac{x}{t},1,k)+ \frac{F(r)(x,t,k)}{t^p}\right].
	\end{equation}
	
	Fix $\epsilon_0 \in (0,1)$ and set
	\begin{equation*}
		R_1:=\{ (x,t)\in\mathbb{R}\times\mathbb{R}^+: -\frac{1}{\epsilon_0} \leq \frac{x}{t} \leq \frac{1}{\epsilon_0}\},
	\end{equation*}
	(see Figure~\ref{fig: RxR^+}). Since $x/t$ is bounded on $R_1$, then for any $k$ fixed, $F(P)(x/t,1,k)$ possesses a maximum and a minimum on $R_1$. Further, since $P(z)$ is quasisymmetric on $\mathbb{R}$ (Lemma~\ref{lem: quasisymmetric-power-func}), then there exist $k_{1,1}\in (0,1)$ and $k_{2,1}>0$ such that
	\begin{equation*}
		F(P)(x/t,1,k_{2,1})< 0< F(P)(x/t,1,k_{1,1}),
	\end{equation*}
	for all $(x,t)\in R_1$. And thus,
	\begin{equation}\label{eq: max min F(P)}
		\max_{(x,t)\in R_1}F(P)(x/t,1,k_{2,1})< 0<\min_{(x,t)\in R_1} F(P)(x/t,1,k_{1,1}).
	\end{equation}
	
	We now prove the following assertion:
	For any $k$ fixed and $\epsilon>0$, there exists $t_0>0$ such that
	\begin{equation}\label{eq: F(r)(x,t,k)/t^p}
		\left| \frac{F(r)(x,t,k)}{t^p}\right| < \epsilon,
	\end{equation}
	for all $(x,t)\in R_1\cap\{(x,t)\in\mathbb{R}\times\mathbb{R}^+: t\geq t_0\}$. We have:
	\begin{equation*}
		\frac{F(r)(x,t,k)}{t^p} = \frac{r(x+t)}{t^p} - (1+k)\frac{r(x)}{t^p} + k\frac{r(x-t)}{t^p}.
	\end{equation*}
	Looking at the first term $r(x+t)/t^p$, we notice that for $\vert x+t\vert$ large, one can write:
	\begin{equation*}
		\left| \frac{r(x+t)}{t^p}\right| =  \frac{\vert r(x+t)\vert }{\vert x+t\vert ^p}\left| \frac{x}{t}+1\right|^p.
	\end{equation*}
	Since $x/t$ is bounded on $R_1$ and $r(x)=o(\vert x\vert^p)$ as $x\to\infty$, then there exists $n_1$ such that for any $(x,t)\in R_1$, $\vert x+t\vert > n_1$, one has:
	\begin{equation}\label{eq: r(x+t)/t^p}
		\left| \frac{r(x+t)}{t^p}\right| < \frac{\epsilon}{3}.
	\end{equation}
	Moreover, there exists $t_1>0$ such that \eqref{eq: r(x+t)/t^p} holds true for all $(x,t)\in R_1$, $\vert x+t\vert \leq n_1$, $t\geq t_1$. This implies that \eqref{eq: r(x+t)/t^p} also holds true for all $(x,t)\in R_1$, $t\geq t_1$. By similar arguments applied to the terms $r(x)/t^p$ and $r(x-t)/t^p$, we conclude that for any $k$ and $\epsilon>0$, there exists $t_0>0$ such that \eqref{eq: F(r)(x,t,k)/t^p} holds true for all $(x,t)\in R_1$, $t\geq t_0$. Eq.\,\eqref{eq: max min F(P)} and \eqref{eq: F(r)(x,t,k)/t^p} imply that there exists $t_0>0$ such that \eqref{eq: quasisymmetric operator} holds for $(k_1,k_2)=(k_{1,1},k_{2,1})$ and for all $(x,t)\in R_1$, $t\geq t_0$.

	Now set
	\begin{equation*}
		R_2:=\{(x,t)\in\mathbb{R}\times\mathbb{R}^+: 0\leq t\leq \epsilon_0 \vert x\vert\},
	\end{equation*}
	(see Figure~\ref{fig: RxR^+}). We first have:
	\begin{equation*}
		\partial_t F(\alpha)(x,t,k) = \alpha'(x+t) - k\alpha'(x-t).
	\end{equation*}
	By Assumption~\ref{assumption: alpha'(z)}, $\alpha'(z) = \vert z\vert^q + u(z)$, where $u(z)=o(\vert z\vert^q)$ as $z\to\pm\infty$, and thus,
	\begin{equation*}
		\frac{\alpha'(x+t)}{\alpha'(x-t)} = \frac{\vert x+t\vert^q+ u( x+t)}{\vert x-t\vert ^q + u( x-t)} = \left|\frac{x+t}{x-t}\right|^q \frac{1+\frac{u( x+t)}{\vert x+t\vert ^q}}{1+\frac{u( x-t)}{\vert x-t\vert ^q}}.
	\end{equation*}
	Notice that on $R_2$, $\big|(x+t)/(x-t)\big|$ is bounded by $(1-\epsilon_0)/(1+\epsilon_0)$ and $(1+\epsilon_0)/(1-\epsilon_0)$, and $(1+\frac{u( x+t)}{\vert x+t\vert ^q})/(1+\frac{u( x-t)}{\vert x-t\vert ^q})$ is close to $1$ as $\vert x\vert $ large enough. Hence, there exist $k_{1,2}\in (0,1)$, $k_{2,2}>1$, and $x_0$ large enough such that
	\begin{equation*}
		k_{1,2} < \frac{\alpha'(x+t)}{\alpha'(x-t)} < k_{2,2},\quad\text{or equivalently,}\quad \partial_tF(\alpha)(x,t,k_{2,2})< 0 < \partial_tF(\alpha)(x,t,k_{1,2}),
	\end{equation*}
	for all $(x,t)\in R_2$, $\vert x\vert \geq x_0$. This implies that for any $\vert x\vert \geq x_0$, $F(\alpha)(x,t,k_{1,2})$ and $F(\alpha)(x,t,k_{2,2})$, as functions of $t$, are strictly monotonic on $[0,\epsilon_0\vert x\vert]$. Since $F(\alpha)(x,0,k_{1,2})=F(\alpha)(x,0,k_{2,2})=0$, then \eqref{eq: quasisymmetric operator} holds true for $(k_1,k_2) = (k_{1,2},k_{2,2})$ and $(x,t)\in R_2, \vert x\vert \geq x_0$.
	
	We now consider the set
	\begin{equation*}
		(\mathbb{R}\times\mathbb{R}^+)\setminus [(R_1\cap\{(x,t)\in\mathbb{R}\times\mathbb{R}^+: t\geq t_0\}) \cup (R_2\cap\{(x,t)\in\mathbb{R}\times\mathbb{R}^+: \vert x\vert\geq x_0\})],
	\end{equation*}
	and let $R_3$ denote its closure (see Figure~\ref{fig: RxR^+}). Let
	\begin{equation*}
		k_{1,3}:= \min_{(x,t)\in R_3} \frac{\alpha(x+t)-\alpha(x)}{\alpha(x)-\alpha(x-t)}
		\quad\text{and}\quad
		k_{2,3}:=\max_{(x,t)\in R_3} \frac{\alpha(x+t)-\alpha(x)}{\alpha(x)-\alpha(x-t)},
	\end{equation*}
	which exist and are positive since $R_3$ is bounded and $\alpha(z)$ is differentiable on $\mathbb{R}$. Thus \eqref{eq: quasisymmetric operator} holds true for $(k_1,k_2) = (k_{1,3},k_{2,3})$ and $(x,t)\in R_3$.
	
	In conclusion, by choosing $k_1 = \min_{1\leq n\leq 3} k_{1,n}$ and $k_2 = \max_{1\leq n\leq 3} k_{1,n}$, we deduce that \eqref{eq: quasisymmetric operator} holds for $(x,t)\in\mathbb{R}\times\mathbb{R}^+$, i.e. $\alpha(z)$ is quasisymmetric on $\mathbb{R}$.
\end{proof}

The following lemma presents the existence of the BVP's solutions introduced at the beginning of Subsection~\ref{subsec: conformal welding}.

\begin{lemma}\label{lem: chi1-chi2-Jordan curve}
	Under Assumptions~\ref{assumption: monotonic}--\ref{assumption: alpha'(z)}, there exist a pair of conformal mappings $\chi_1$, $\chi_2$ and a Jordan curve $\mathcal{G}$ such that
	\begin{enumerate}[label=\textnormal{(\roman{*})},ref=\textnormal{(\roman{*})}]
		\item\label{item: chi1 conformal} $\chi_1$ maps conformally $\mathcal{H}^+$ onto $\mathcal{G}^+$;
		\item\label{item: chi2 conformal} $\chi_2$ maps conformally $\mathcal{H}^-$ onto $\mathcal{G}^-$;
		\item\label{item: BVP with shift} For all $z\in\mathbb{R}$,
		\begin{equation}\label{eq: boundary condition conformal welding}
			\chi_1^+(\alpha(z)) = \chi_2^-(z).
		\end{equation}
	\end{enumerate}
	We can further assume that
	\begin{enumerate}[label=\textnormal{(\roman{*})},ref=\textnormal{(\roman{*})},resume]
		\item\label{item: chi1(infty)=1} $\lim_{z\to \infty}\chi_1(z)=1$;
		\item\label{item: G symmetric} $\mathcal{G}$ is symmetric with respect to the real axis;
		\item\label{item: chi1-chi2-symmetric} $\chi_1(-\overline{z})=\overline{\chi_1(z)}$ for all $z\in \mathcal{H}^+$, $\chi_2(-\overline{z})=\overline{\chi_2(z)}$ for all $z\in \mathcal{H}^-$.
	\end{enumerate}
	In particular, the curve $\mathcal{G}$ is smooth everywhere except at $1$, where it admits a corner point with the angle
	\begin{equation*}
		\frac{\theta_1}{\theta_1+\theta_2}2\pi.
	\end{equation*}
\end{lemma}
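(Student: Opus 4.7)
The plan is to realize the pair $(\chi_1,\chi_2)$ as the classical solution of the conformal welding problem determined by the quasisymmetric shift $\alpha$, via the measurable Riemann mapping theorem, and then to extract the corner angle from the asymptotics of $\alpha$ at infinity. First, since Lemma~\ref{lem: alpha quasisymmetric} gives that $\alpha$ is quasisymmetric on $\mathbb{R}$ and Lemma~\ref{lem: alpha(z)} that $\alpha$ is odd, I would invoke a symmetric quasiconformal extension (Beurling--Ahlfors, or better the Douady--Earle extension which is conformally natural) to produce $\tilde\alpha:\overline{\mathcal{H}^+}\to\overline{\mathcal{H}^+}$, a quasiconformal self-homeomorphism of the closed upper half-plane whose boundary values on $\mathbb{R}$ are $\alpha$ and which satisfies $\tilde\alpha(-\bar z)=-\overline{\tilde\alpha(z)}$.

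Next, I would build the Beltrami coefficient $\mu$ on $\widehat{\mathbb{C}}$ that coincides with the Beltrami coefficient of $\tilde\alpha$ on $\mathcal{H}^+$ and vanishes on $\mathcal{H}^-$; since $\|\mu\|_\infty<1$, the measurable Riemann mapping theorem yields a quasiconformal homeomorphism $F:\widehat{\mathbb{C}}\to\widehat{\mathbb{C}}$ with $\partial_{\bar z}F=\mu\,\partial_zF$, made unique by the normalization $F(\infty)=1$ together with two further conditions chosen to be equivariant under the involution $J(z)=-\bar z$ (for instance, $F(i)$ and $F(-i)$ conjugate). Setting $\chi_2:=F|_{\mathcal{H}^-}$ (conformal since $\mu\equiv 0$ there) and $\chi_1:=F\circ\tilde\alpha^{-1}|_{\mathcal{H}^+}$ (conformal because $\tilde\alpha$ and $F$ share the same Beltrami coefficient on $\mathcal{H}^+$, so the composition has zero dilatation), and defining $\mathcal{G}:=F(\mathbb{R}\cup\{\infty\})$, produces a Jordan curve with $\chi_1(\mathcal{H}^+)=\mathcal{G}^+$ and $\chi_2(\mathcal{H}^-)=\mathcal{G}^-$. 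The welding identity $\chi_1^+(\alpha(z))=F(\tilde\alpha^{-1}(\alpha(z)))=F(z)=\chi_2^-(z)$ on $\mathbb{R}$ holds by construction. The $J$-invariance of $\mu$ combined with the $J$-equivariant normalization forces $F\circ J=J\circ F$ by the uniqueness in the measurable Riemann mapping theorem, which translates directly into the symmetries \ref{item: G symmetric}--\ref{item: chi1-chi2-symmetric}.

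Finally, I would read off the angle of $\mathcal{G}$ at $1$ from the power-law asymptotic $\alpha(z)\sim\text{const}\cdot z^{\theta_2/\theta_1}$ of Lemma~\ref{lem: alpha(z)}. Denoting by $\beta_1$ and $\beta_2=2\pi-\beta_1$ the opening angles of $\mathcal{G}^+$ and $\mathcal{G}^-$ at $1=F(\infty)$, the standard local form of a conformal map onto a corner domain gives $\chi_1(z)-1\sim c_1 z^{-\beta_1/\pi}$ as $z\to\infty$ in $\mathcal{H}^+$ and $\chi_2(z)-1\sim c_2 z^{-\beta_2/\pi}$ as $z\to\infty$ in $\mathcal{H}^-$. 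Substituting these into the welding identity $\chi_1^+(\alpha(z))=\chi_2^-(z)$ and matching exponents of $z$ yields $(\theta_2/\theta_1)(\beta_1/\pi)=\beta_2/\pi$, which together with $\beta_1+\beta_2=2\pi$ produces $\beta_1=2\pi\theta_1/(\theta_1+\theta_2)$, as claimed.

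The main obstacle I anticipate is justifying the local corner behavior of the welded curve $\mathcal{G}$ at $1$: the general welding theory produces only a quasiconformal image of a line, a priori merely a Jordan curve without smoothness, so one must use that $\alpha$ is analytic on $\mathbb{R}\setminus\{\infty\}$ with the power-law derivative at infinity from Assumption~\ref{assumption: alpha'(z)} to upgrade $\mathcal{G}$ to a curve that is smooth away from $1$ and admits a genuine corner there with the claimed opening. This can be done by a local analysis at infinity (transferring to the origin via $w=1/z$), where the Beltrami coefficient becomes small enough that $F$ is asymptotically conformal off a small disk, combined with the explicit local model $\zeta\mapsto\zeta^{\beta_i/\pi}$ at the corner and Warschawski-type boundary regularity for conformal mappings onto corner domains.
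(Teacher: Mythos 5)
Your proposal is correct in substance and reaches the same conclusion by a route that is partly identical and partly different from the paper's. For existence, the paper simply cites Lehto--Virtanen (Chap.~II, Sec.~7.5) for the welding of a quasisymmetric shift, whereas you unpack the standard proof of that very result (Beurling--Ahlfors/Douady--Earle extension, Beltrami coefficient supported in $\mathcal{H}^+$, measurable Riemann mapping theorem, factorization $F=\chi_1\circ\tilde\alpha$); this buys self-containedness at the cost of length, and is mathematically equivalent. For the symmetries \ref{item: chi1(infty)=1}--\ref{item: chi1-chi2-symmetric} the routes genuinely diverge: the paper takes the unnormalized welding pair $\widetilde\chi_1,\widetilde\chi_2$, re-uniformizes the ``right half'' $\mathcal{D}^+$ of the welded sphere onto $\mathcal{H}^+$ by an auxiliary map $\zeta$, and then defines $\chi_1,\chi_2$ by Schwarz reflection across the imaginary axis, while you obtain the symmetry directly from the $J$-invariance of the Beltrami datum ($J(z)=-\overline z$) together with a $J$-equivariant normalization and uniqueness in the measurable Riemann mapping theorem; your argument is cleaner but requires checking the identity $\mu(z)=\overline{\mu(-\overline z)}$ for the extension's dilatation, which the conformal naturality of Douady--Earle indeed guarantees. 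The corner-angle computation is the same in both: substitute the power-law local models of $\chi_1$ and $\chi_2$ at $1=\chi_1^+(\infty)$ into the welding identity and match exponents using $\alpha(z)\sim\text{const}\cdot z^{\theta_2/\theta_1}$, giving $\beta_1(1+\theta_2/\theta_1)=2\pi$. Finally, you correctly flag the one genuinely delicate point that the paper passes over in silence: Pommerenke's Theorem~3.11 (and your Warschawski-type local model) presupposes that $\mathcal{G}$ \emph{has} a tangent, respectively a corner, at the point in question, whereas welding a priori only yields a quasicircle; some a priori regularity input (e.g.\ your asymptotic-conformality argument near $\infty$, or conformality of $F$ on $\mathcal{H}^-$ plus analyticity of $\alpha$ on $\mathbb{R}$ for the finite boundary points) is needed before the exponent matching is legitimate, so on this point your write-up is actually more careful than the paper's.
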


\begin{proof}
	Since the shift function $\alpha(z)$ is quasisymmetric (Lemma~\ref{lem: alpha quasisymmetric}), then there exist $\widetilde{\chi}_1$, $\widetilde{\chi}_2$ and $\widetilde{\mathcal{G}}$ satisfying Items \ref{item: chi1 conformal}--\ref{item: BVP with shift} by \cite[Chap.\,II, Sec.\,7.5]{LeVi-73}.
	
	We now construct conformal mappings $\chi_1$ and $\chi_2$ satisfying Items~\ref{item: chi1 conformal}--\ref{item: chi1-chi2-symmetric} from $\widetilde{\chi}_1$ and $\widetilde{\chi}_2$. By the construction of $\widetilde{\chi}_1$ and $\widetilde{\chi}_2$, the union of $\widetilde{\chi}_1(\{a+bi:a,b>0\})$, $\widetilde{\chi}_2(\{a-bi:a,b>0\})$ and $\widetilde{\chi}_1^+(\mathbb{R}_+)$ is indeed a domain and denoted by $\mathcal{D}^+$. Let $a=\widetilde{\chi}_1^+(0)$, $b=\widetilde{\chi}_1^+(\infty)$, and $\zeta$ be a conformal mapping from $\mathcal{D}^+$ onto $\mathcal{H}^+$ such that $\zeta^+(a)\not=\infty$ and $\zeta^+(b)=1$. Such a mapping $\zeta$ exists and can be constructed as follows:
	\begin{itemize}
		\item Let $\delta$ be a conformal mapping from $\mathcal{D}^+$ onto $\mathcal{C}^+$ such that $\delta^+(a),\delta^+(b)\not=1$ and $1\in \delta^+\circ\widetilde{\chi}_2^+(i\mathbb{R}_-)$;
		\item Recall that $\phi(z)$ defined in \eqref{eq: phi} maps conformally $\mathcal{C}^+$ onto $\mathcal{H}^+$;
		\item Define $\zeta(z):=\phi\circ\delta(z)-\phi\circ\delta^+(b)+1$.
	\end{itemize}
	
	Now define
	\begin{align*}
		\chi_1(z):=\begin{cases}
		\zeta\circ\widetilde{\chi}_1(z),&\quad\text{if } z\in\{a+bi:a,b>0\},\\
		\zeta^+\circ\widetilde{\chi}_1(z),&\quad\text{if } z\in i\mathbb{R}_+,\\[1.5pt]
		\overline{\zeta\circ\widetilde{\chi}_1(-\overline{z})},&\quad\text{if } z\in\{-a+bi:a,b>0\},
		\end{cases}
		\\
		\chi_2(z):=\begin{cases}
		\zeta\circ\widetilde{\chi}_2(z),&\quad\text{if } z\in\{a-bi:a,b>0\},\\
		\zeta^+\circ\widetilde{\chi}_2(z),&\quad\text{if } z\in i\mathbb{R}_-,\\[1.5pt]
		\overline{\zeta\circ\widetilde{\chi}_2(-\overline{z})},&\quad\text{if } z\in\{-a-bi:a,b>0\}.
		\end{cases}
	\end{align*} 
	Put $\mathcal{G}=\chi_1^+(\mathbb{R})$. By Schwarz reflection principle, $\chi_1$ and $\chi_2$ are analytic respectively on $\mathcal{H}^+$ and $\mathcal{H}^-$, and $\mathcal{G}$ is a Jordan curve. It is easy to check that $\chi_1$, $\chi_2$ and $\mathcal{G}$ satisfy Items~\ref{item: chi1 conformal}--\ref{item: chi1-chi2-symmetric}. 
	
	We now show the smoothness of $\mathcal{G}$.
	Let $z_0$ be any point on $\mathbb{R}\cup\{\infty\}$ and $\lambda$ denote the angle of $\mathcal{G}$ at $\chi_2^-(z)$. Consider first the case where $z_0\in\mathbb{R}$. Recall that $\alpha(z)$ is analytic and possesses non-vanishing derivatives on $\mathbb{R}$, then by \cite[Thm.\,3.11]{Po-92}, one has:
	\begin{align*}
		&\chi_1^+(\alpha(z)) = \chi_1^+(\alpha(z_0)) +\text{const}\cdot(z-z_0)^{\lambda/\pi}+ o\big( (z-z_0)^{\lambda/\pi} \big),\\
		&\chi_2^-(z) = \chi_2^-(z_0) +\text{const}\cdot(z-z_0)^{(2\pi-\lambda)/\pi} + o\big( (z-z_0)^{(2\pi-\lambda)/\pi} \big),
	\end{align*}
	as $z\in\mathbb{R},z\to z_0$. Item~\ref{item: BVP with shift} implies that $\lambda/\pi = (2\pi-\lambda)/\pi$, and thus $\lambda = \pi$, i.e., $\mathcal{G}$ is smooth at $\chi_2^-(z_0)$.
	
	We now consider the case where $z_0=\infty$. Notice that $\alpha(z)= z^{\theta_2/\theta_1} + o(z^{\theta_2/\theta_1})$ as $z\to\infty$, then also by \cite[Thm.\,3.11]{Po-92}, one has:
	\begin{align*}
		&\chi_1^+(\alpha(z)) = 1 +\frac{\text{const}}{z^{(\lambda/\pi)(\theta_2/\theta_1)}} + o\left(\frac{1}{z^{(\lambda/\pi)(\theta_2/\theta_1)}}\right),\\
		&\chi_2^-(z) = 1 +\frac{\text{const}}{z^{(2\pi-\lambda)/\pi}} + o\left(\frac{1}{ z^{(2\pi-\lambda)/\pi} }\right),
	\end{align*}
	as $z\in\mathbb{R},z\to\infty$. Item~\ref{item: BVP with shift} implies that
	\begin{equation*}
		\frac{\lambda}{\pi}\frac{\theta_2}{\theta_1} = \frac{2\pi-\lambda}{\pi}, \text{ i.e. } \lambda = \frac{\theta_1}{\theta_1+\theta_2} 2\pi.
	\end{equation*}
	The proof is then complete.
\end{proof}

At this point, we already have enough material to build a BVP concerning the generating function $H(x,y)$ in \eqref{eq: H(x,y)-def}, with a boundary condition on the Jordan curve $\mathcal{G}$. However, the corner point $1$ of $\mathcal{G}$ may provoke some difficulties on the regularity of solutions around this point. To overcome such problems, we want to transform the problem to a new one with a boundary condition on an infinite curve, where the non-regularity problem may happen at infinity. Therefore, we introduce the following mappings:
\begin{equation}
	\psi_1:=-i\phi\circ\chi_1\circ\phi\circ\pi_1 \quad\text{and}\quad \psi_2:=-i\phi\circ\chi_2\circ(-\phi )\circ\pi_2,
\end{equation}
where $\phi$ is defined in \eqref{eq: phi}. Let $\mathcal{L}$, $\mathcal{L}^+$ and $\mathcal{L}^-$ respectively denote $\phi(\mathcal{G})$, $\phi(\mathcal{G}^+)$ and $\phi(\mathcal{G}^-)$. By this construction, $\mathcal{L}$ is an open infinite curve, and $\psi_1$ (resp.~$\psi_2$) maps conformally $\mathcal{S}_1^+$ (resp.~$\mathcal{S}_2^+$) onto $\mathcal{L}^+$ (resp.~$\mathcal{L}^-$).

The following lemma presents some crucial properties of $\psi_1$ and $\psi_2$.
\begin{lemma}\label{lem: psi1-psi2}
	We have:
	\begin{enumerate}[label=\textnormal{(\roman{*})},ref=\textnormal{(\roman{*})}]
		\item\label{item: psi_1, psi_2 symmetric} $\psi_1(\overline{z}) = \overline{\psi_1(z)}$ for all $z\in\mathcal{S}_1^+$, and $\psi_2(\overline{z}) = \overline{\psi_2(z)}$ for all $z\in\mathcal{S}_2^+$;
		
		\item\label{item: psi1 x psi2}
		\begin{equation*}
			(\psi_1^{-1})^+\times (\psi_2^{-1})^- (\mathcal{L}) =\begin{cases}
			\mathcal{K}\setminus\{(0,0)\},&\quad\text{if } p_{1,1}=0,p_{0,1}\not=p_{1,0},\\
			\mathcal{K},&\quad\text{otherwise};
			\end{cases}
		\end{equation*}
		
		\item\label{item: asymptotic psi1 psi2} The asymptotic behaviors of $\psi_1$ and $\psi_2$ around $1$ are
		\begin{equation*}
			\psi_1(x) \sim \frac{\text{const}}{(1-x)^{2\pi/(\theta_1+\theta_2)}} \quad\text{and}\quad
			\psi_2(y) \sim \frac{\text{const}}{(1-y)^{2\pi/(\theta_1+\theta_2)}}.
		\end{equation*}
	\end{enumerate}
\end{lemma}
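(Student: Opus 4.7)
The three items all reduce to routine verifications using properties of the constituent mappings $\phi, \pi_1, \pi_2, \chi_1, \chi_2$ already established, together with the conformal welding identity \eqref{eq: boundary condition conformal welding}. I treat them in order.

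\textbf{Item (i): symmetry.} The proof is a one-line chain of substitutions. A direct computation from $\phi(z)=-i(z+1)/(z-1)$ gives $\phi(\overline{z})=-\overline{\phi(z)}$. Combined with $\pi_1(\overline{z})=\overline{\pi_1(z)}$ (Lemma on $\pi_1$) and $\chi_1(-\overline{z})=\overline{\chi_1(z)}$ (Item~\ref{item: chi1-chi2-symmetric} of Lemma~\ref{lem: chi1-chi2-Jordan curve}), the definition $\psi_1=-i\phi\circ\chi_1\circ\phi\circ\pi_1$ yields, step by step,
\begin{equation*}
\psi_1(\overline{z})
= -i\phi\bigl(\chi_1(\phi(\overline{\pi_1(z)}))\bigr)
= -i\phi\bigl(\chi_1(-\overline{\phi(\pi_1(z))})\bigr)
= -i\phi\bigl(\overline{\chi_1(\phi(\pi_1(z)))}\bigr)
= \overline{\psi_1(z)}.
\end{equation*}
The identical chain, with $\pi_2$ in place of $\pi_1$ and $-\phi$ in the third slot, proves the formula for $\psi_2$.

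\textbf{Item (ii): image identification.} I will show $\psi_1^+(X(y))=\psi_2^-(y)$ for every $y\in\mathcal{S}_2$; this immediately gives $(\psi_1^{-1})^+\times(\psi_2^{-1})^-(\mathcal{L})\supset\{(X(y),y):y\in\mathcal{S}_2\}$, while the reverse inclusion follows from $\psi_1^+,\psi_2^-$ being bijections onto $\mathcal{L}$. Fix $y\in\mathcal{S}_2$ and set $t:=-\phi(\pi_2^+(y))\in\mathbb{R}\cup\{\infty\}$. Then $(\pi_2^{-1})^+(\phi^{-1}(-t))=y$, so the definition \eqref{eq: alpha(z)} of the shift function reads $\alpha(t)=\phi(\pi_1^+(X(y)))$. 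Applying $-i\phi\circ\chi_1^+$ to the left-hand side and $-i\phi\circ\chi_2^-$ to the right-hand side of \eqref{eq: boundary condition conformal welding} gives
\begin{equation*}
\psi_1^+(X(y)) \;=\; -i\phi\bigl(\chi_1^+(\alpha(t))\bigr) \;=\; -i\phi\bigl(\chi_2^-(t)\bigr) \;=\; \psi_2^-(y),
\end{equation*}
as desired. Comparing with the description \eqref{eq: mathcal K description} of $\mathcal{K}$ and using Lemma~\ref{lem: S1-S2}\ref{item: position-0-S1-S2}, the isolated point $(0,0)$ of $\mathcal{K}$ arises only in the case $p_{1,1}=0,\,p_{1,0}\neq p_{0,1}$, where it belongs to $\mathcal{S}_1^+\times\mathcal{S}_2^-$ (or the symmetric arrangement) and so is not attained by $(\psi_1^{-1})^+\times(\psi_2^{-1})^-$ on $\mathcal{L}$; in all other cases $\mathcal{K}=\{(X(y),y):y\in\mathcal{S}_2\}$ and the image is all of $\mathcal{K}$.

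\textbf{Item (iii): asymptotics at $1$.} This is where the only genuine computation happens, via Pommerenke's corner asymptotic (\cite[Thm.~3.11]{Po-92}) applied twice. Since $\mathcal{S}_1$ has a corner of angle $\theta_1$ at $1$ while $\mathcal{C}$ is smooth there, and $\pi_1^+(1)=1$,
\begin{equation*}
\pi_1(x)-1 \sim \mathrm{const}\cdot(x-1)^{\pi/\theta_1}\qquad (x\to 1,\; x\in\mathcal{S}_1^+).
\end{equation*}
Next I need the behavior of $\chi_1(z)$ as $z\to\infty$ in $\mathcal{H}^+$. Pre-composing with the involution $w\mapsto -1/w$ of $\mathcal{H}^+$ (sending $0$ to $\infty$) turns $\chi_1$ into a conformal map $\widetilde\chi_1(w):=\chi_1(-1/w)$ from $\mathcal{H}^+$ onto $\mathcal{G}^+$ with $\widetilde\chi_1(0)=1$; since $\mathcal{G}$ has a corner at $1$ of angle $\frac{2\pi\theta_1}{\theta_1+\theta_2}$ (Lemma~\ref{lem: chi1-chi2-Jordan curve}), \cite[Thm.~3.11]{Po-92} gives
\begin{equation*}
\widetilde\chi_1(w)-1 \sim \mathrm{const}\cdot w^{2\theta_1/(\theta_1+\theta_2)},\qquad \text{hence}\qquad \chi_1(z)-1 \sim \mathrm{const}\cdot z^{-2\theta_1/(\theta_1+\theta_2)}.
\end{equation*}
Finally, using $\phi(u)\sim -2i/(u-1)$ as $u\to 1$, the composition chain gives
\begin{equation*}
\psi_1(x) = -i\phi\bigl(\chi_1(\phi(\pi_1(x)))\bigr)
\sim \mathrm{const}\cdot\bigl(\phi(\pi_1(x))\bigr)^{2\theta_1/(\theta_1+\theta_2)}
\sim \frac{\mathrm{const}}{(1-x)^{(\pi/\theta_1)\cdot 2\theta_1/(\theta_1+\theta_2)}}
= \frac{\mathrm{const}}{(1-x)^{2\pi/(\theta_1+\theta_2)}}.
\end{equation*}
The argument for $\psi_2$ is identical, using that the corner of $\mathcal{G}$ at $1$ viewed from $\mathcal{G}^-$ has the complementary angle $\frac{2\pi\theta_2}{\theta_1+\theta_2}$ and that $\pi_2$ contributes the exponent $\pi/\theta_2$ at $1$; the product of the exponents produces the same $2\pi/(\theta_1+\theta_2)$. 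The main obstacle here is the bookkeeping of which angle enters Pommerenke's theorem after the inversion $w\mapsto -1/w$—one must verify that the $\mathcal{G}^+$-side angle $\frac{2\pi\theta_1}{\theta_1+\theta_2}$ (rather than its complement) is the one that controls $\widetilde\chi_1$ near $0$, which follows from the fact that $\widetilde\chi_1$ maps $\mathcal{H}^+$ onto $\mathcal{G}^+$.
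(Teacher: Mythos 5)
Your proposal is correct and follows essentially the same route as the paper: item (i) by the same chain of conjugation identities for $\phi$, $\pi_1$, $\chi_1$; item (ii) as a direct consequence of the welding condition \eqref{eq: boundary condition conformal welding} (you merely spell out the substitution $t=-\phi(\pi_2^+(y))$ that the paper leaves implicit); and item (iii) by composing the corner asymptotics of $\pi_1$, $\chi_1$ and the M\"obius map exactly as in the paper. The only blemish is notational: by the paper's convention the boundary value of $\psi_2$ (defined on $\mathcal{S}_2^+$) should be written $\psi_2^+(y)$ rather than $\psi_2^-(y)$, but this does not affect the argument.
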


\begin{proof}
	We first prove Item~\ref{item: psi_1, psi_2 symmetric}. For any $z\in\mathcal{S}_1^+$, we have:
	\begin{align*}
		\psi_1(\overline{z}) &= -i\phi\circ\chi_1\circ\phi\circ\pi_1(\overline{z})=-i\phi\circ\chi_1\circ\phi\circ\overline{\pi_1(z)} = -i\phi\circ\chi_1\circ\overline{(-\phi)\circ\pi_1(z)} \\
		&= -i\phi\circ\overline{\chi_1\circ\phi\circ\pi_1(z)} = \overline{-i\phi\circ \chi_1\circ\phi\circ\pi_1(z)} = \overline{\psi_1(z)}.
	\end{align*}
	Similarly, $\psi_2(\overline{z}) = \overline{\psi_2(z)}$ for any $z\in\mathcal{S}_2^+$.
	
	Item~\ref{item: psi1 x psi2} is a direct consequence of Eq.~\eqref{eq: boundary condition conformal welding}. We move to the proof of Item~\ref{item: asymptotic psi1 psi2}. Recall that
	\begin{align*}
		& \pi_1(z) = 1+ \text{const}\cdot(z-1)^{\pi/\theta_1} + o\big( (z-1)^{\pi/\theta_1} \big),\quad\text{as}\quad z\to 1 ,\\
		& \chi_1(z) =  1 + \frac{\text{const}}{z^{2\theta_1/(\theta_1+\theta_2)}} +o\left(\frac{1}{z^{2\theta_1/(\theta_1+\theta_2)}}\right), \quad \text{as}\quad z\to\infty,\\
		& \phi(z) = -i\frac{z+1}{z-1}.
	\end{align*}
	We then have:
	\begin{align*}
		& \phi\circ \pi_1 (z) = -i + \frac{\text{const}}{(z-1)^{\pi/\theta_1}} + o\left(\frac{1}{(z-1)^{\pi/\theta_1}}\right),\\
		& \chi_1\circ\phi\circ \pi_1(z) = 1+\text{const}\cdot (z-1)^{2\pi/(\theta_1+\theta_2)} + o\left( (z-1)^{2\pi/(\theta_1+\theta_2)} \right),\\
		& \psi_1(z)=-i\phi\circ\chi_1\circ\phi\circ\pi_1(z) = \frac{\text{const}}{(z-1)^{2\pi/(\theta_1+\theta_2)}} + o\left( \frac{1}{(z-1)^{2\pi/(\theta_1+\theta_2)}}  \right),
	\end{align*}
	as $z\to 1$. The asymptotic behavior of $\psi_2$ around $1$ is deduced similarly. The proof is then complete.
\end{proof}

\subsection*{On the regularity and analyticity of $\psi_1$ and $\psi_2$ around zero}
Since the extensions of $\psi_1$ and $\psi_2$ around zero will play a major role in defining the bivariate series $H(x,y)$, we spend a small part in the article to discuss it.

We first recap some information about the point zero. In the case $p_{1,1}\neq 0$, since $0\in\mathcal{S}_1^+,\mathcal{S}_2^+$ (see Lemma~\ref{lem: S1-S2}), then $\psi_1$ and $\psi_2$ are obviously analytic around $0$. However, in the case $p_{1,1}=0$ and $p_{1,0}=p_{0,1}$, because $0\in\mathcal{S}_1,\mathcal{S}_2$, we only define the limit values $\psi_1^+(0)$ and $\psi_2^+(0)$. And in the case $p_{1,1}=0$ and $p_{0,1}>p_{1,0}$, $0\in\mathcal{S}_1^+$ but $0\notin\mathcal{S}_2^+\cup\mathcal{S}_2$, $\psi_1$ is analytic around $0$ but $\psi_2(0)$ has not been defined yet.

The following lemma presents analytic continuations of $\psi_1$ and $\psi_2$ around $0$.

\begin{proposition}\label{prop: psi_1,2(0)}
	In the case $p_{1,1}=0$ and $p_{1,0}=p_{0,1}$, $\psi_1$ and $\psi_2$ can be extended analytically around $0$, such that $\psi_1(0)=\psi_2(0)\in\mathcal{L}\cap\mathbb{R}$ and $\psi_1'(0),\psi_2'(0)\neq 0$.
	
	In the case $p_{1,1}=0$ and $p_{0,1}>p_{1,0}$, $\psi_2$ can be extended analytically to $0$ along the real line, such that $\psi_1(0)=\psi_2(0)\in\mathcal{L}^+\cap\mathbb{R}$ and $\psi_2'(0)\neq 0$.
\end{proposition}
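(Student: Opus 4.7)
The plan is to assemble both extensions from component extensions of $\pi_1,\pi_2,\chi_1,\chi_2$, combining two independent ingredients: (a) the curves $\mathcal{S}_1,\mathcal{S}_2$ are real-analytic at every point different from $1$, because the parametrisations $s\mapsto\eta(s)s$ and $s\mapsto\eta(s)s^{-1}$ arise from an analytic branch of an implicit equation (see Lemma~\ref{lem: solutions-mathcal-K}); (b) the shift $\alpha$ is real-analytic on $\mathbb{R}$ with non-vanishing derivative (Lemma~\ref{lem: alpha(z)}). From (a), classical Schwarz reflection gives analytic continuations of $\pi_1$ across $\mathcal{S}_1$ and of $\pi_2$ across $\mathcal{S}_2$ at every point other than $1$, since the target curves are the analytic unit circle $\mathcal{C}$. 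Using (b), I would define
\begin{equation*}
F(w):=\chi_1(w) \text{ for } w\in\mathcal{H}^+, \qquad F(w):=\chi_2\bigl(\alpha^{-1}(w)\bigr) \text{ for } w\in\mathcal{H}^-,
\end{equation*}
where $\alpha^{-1}$ denotes the complex-analytic extension of $\alpha^{-1}$ into a thin strip around $\mathbb{R}$. The welding identity \eqref{eq: boundary condition conformal welding} is exactly $F^+=F^-$ on $\mathbb{R}$, so Morera's theorem upgrades $F$ to an analytic function across $\mathbb{R}$; consequently $\chi_1$ extends analytically across $\mathbb{R}$, and so does $\chi_2 = F\circ\alpha$.

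In the first case ($p_{1,1}=0$, $p_{1,0}=p_{0,1}$, where $0\in\mathcal{S}_1\cap\mathcal{S}_2$), the composition $\psi_1=-i\phi\circ\chi_1\circ\phi\circ\pi_1$ becomes analytic on a neighbourhood of $0$ once I patch the extension of $\pi_1$ around $0$, the M\"obius map $\phi$, and the extension of $\chi_1$ around the finite real point $\phi(\pi_1^+(0))$; the same argument handles $\psi_2$. The equality $\psi_1(0)=\psi_2(0)$ then follows from the boundary identity $\psi_1^+\circ X=\psi_2^+$ on $\mathcal{S}_2$ at $y=0$, using $X(0)=0$ (from the expansion in the proof of Lemma~\ref{lem: solution-neigborhood-S2}); reality of this common value is forced by the Schwarz symmetry $\psi_j(\bar z)=\overline{\psi_j(z)}$ of Lemma~\ref{lem: psi1-psi2}\ref{item: psi_1, psi_2 symmetric}. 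Non-vanishing of $\psi_j'(0)$ is automatic because each factor in the composition is locally univalent at the relevant boundary point.

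In the second case ($p_{1,1}=0$, $p_{0,1}>p_{1,0}$, where $0\in\mathcal{S}_1^+$ and $0\in\mathcal{S}_2^-$), $\psi_1$ is already analytic at $0$ with $\psi_1(0)\in\mathcal{L}^+$. I would extend $\psi_2$ via the identity $\psi_2=\psi_1\circ X$, which holds on $\mathcal{S}_2$ by construction: Lemma~\ref{lem: solution-neigborhood-S2} sends the continuation of $X$ across $\mathcal{S}_2$ into $\mathcal{S}_1^+$ on the $\mathcal{S}_2^-$-side, so $\psi_1\circ X$ is a legitimate analytic extension of $\psi_2$ past the real crossing $y_0=\mathcal{S}_2\cap\mathbb{R}_+\setminus\{1\}$. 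Propagating along the real segment $[0,y_0]$, Proposition~\ref{prop: extend-solutions-intro}\ref{item: Prop1-item1} and Lemma~\ref{lem: X(y) around 0} guarantee that $X$ remains analytic with $X(0)=0\in\mathcal{S}_1^+$; composing with $\psi_1$ (analytic on a neighbourhood of $0$) yields the desired extension $\psi_2(y):=\psi_1(X(y))$. Immediately $\psi_2(0)=\psi_1(0)\in\mathcal{L}^+\cap\mathbb{R}$, and the expansion of Lemma~\ref{lem: X(y) around 0} gives $\psi_2'(0)=\psi_1'(0)X'(0)=-(p_{1,0}/p_{0,1})\psi_1'(0)\neq 0$ (this relies on $p_{1,0}>0$, which is implicit in the ambient setting).

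The main obstacle I expect is ingredient (b): promoting the welding identity, a priori only a continuous quasisymmetric matching on $\mathbb{R}$, into an analytic glueing across $\mathbb{R}$. This is precisely where real-analyticity of $\alpha$---rather than its mere quasisymmetry, which sufficed for existence of the welding in Subsection~\ref{subsec: conformal welding}---is essential, making the Morera argument for $F$ go through. Everything else is bookkeeping around composition and analytic continuation of $X$.
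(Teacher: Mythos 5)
Your argument is correct, and for the first case it takes a genuinely different route from the paper's. The paper works entirely at the level of $\psi_1,\psi_2$: near $0\in\mathcal{S}_1$ it sets $\omega=\psi_1$ on $V\cap\mathcal{S}_1^+$ and $\omega=\psi_2\circ Y$ on $V\cap\mathcal{S}_1^-$ (legitimate because Lemma~\ref{lem: solution-neigborhood-S2} sends the $\mathcal{S}_1^-$ side into $\mathcal{S}_2^+$), notes that the welding identity makes $\omega$ continuous across $\mathcal{S}_1$, and applies Morera; the non-vanishing of the derivative is then read off from the smoothness of $\mathcal{S}_1$ and $\mathcal{L}$ at the matching points via Pommerenke's theorem. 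You instead continue each factor of $\psi_j=-i\phi\circ\chi_j\circ(\pm\phi)\circ\pi_j$ separately: Schwarz reflection of $\pi_j$ across the analytic arcs $\mathcal{S}_j\setminus\{1\}$, plus a Morera glueing of $\chi_1$ with $\chi_2\circ\alpha^{-1}$ across $\mathbb{R}$, which indeed hinges on the real-analyticity and non-vanishing derivative of $\alpha$ (Lemma~\ref{lem: alpha(z)}) to extend $\alpha^{-1}$ into a one-sided strip; since you only need the extension near the finite real points $\phi(\pi_1^+(0))$ and $-\phi(\pi_2^+(0))$, the possible pinching of that strip at infinity is harmless. Your version proves more (continuation of $\psi_1,\psi_2$ across every boundary point other than $1$) at the cost of invoking the analyticity of the arcs and of $\alpha$, whereas the paper's version needs only the continuation of $Y$ near $0$ and continuity of boundary values. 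For the second case, your argument ($\psi_2:=\psi_1\circ X$ propagated along $[0,y_0]$, with reality of the Taylor coefficients keeping $X$ inside $\mathcal{S}_1^+$) coincides with the paper's.

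One caveat, which you share with the paper: the conclusion $\psi_2'(0)=\psi_1'(0)X'(0)\neq0$ in the second case requires $X'(0)=-p_{1,0}/p_{0,1}\neq0$, i.e., $p_{1,0}>0$. Contrary to your parenthetical, this is not implicit in the hypotheses: $p_{1,0}=0$ with $p_{0,1}>p_{1,0}$ is permitted (e.g., $p_{0,1}=p_{-1,0}=p_{1,-1}=1/3$, one of the paper's own examples), and there $X(y)=O(y^2)$ by Lemma~\ref{lem: X(y) around 0}, so $\psi_2'(0)=0$. The paper's proof asserts $\psi_2'(0)\neq0$ via the same formula, so this is not a defect of your argument relative to the paper's, but the dependence on $p_{1,0}>0$ should be stated explicitly rather than treated as automatic.
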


\begin{proof}
	Consider first the case $p_{1,1}=0$ and $p_{1,0}=p_{0,1}$. Let $V$ be an open and small enough neighborhood of $0$ such that all the assertions in Lemma~\ref{lem: solution-neigborhood-S2} hold true for $Y(x)$ on $V$. We then introduce the function
	\begin{equation*}
		\omega(x):=\begin{cases}
			\psi_1(x), & x\in V\cap\mathcal{S}_1^+,\\
			\psi_2(Y(x)), & x\in V\cap\mathcal{S}_1^-.
		\end{cases}
	\end{equation*}
	Since $\psi_1$ is well defined in $\mathcal{S}_1^+$, $\psi_2$ is well defined in $\mathcal{S}_2^+$, and $Y(V\cap\mathcal{S}_1^-)$ is a subset of $ \mathcal{S}_2^+$ (Lemma~\ref{lem: solution-neigborhood-S2}), then $\omega(x)$ is also well defined. Further, $\omega(x)$ is sectionally analytic on $V\cap\mathcal{S}_1^+$, $V\cap\mathcal{S}_2^+$ and continuous on $V$ (since $\psi^+_1(x)=\psi_2^+(Y(x))$ for all $x\in V\cap\mathcal{S}_1$), then by Morera's theorem, $\omega(x)$ is analytic on $V$. In other words, $\psi_2(Y(x))$ is an analytic continuation of $\psi_1(x)$ around $0$. Since $\mathcal{S}_1$ and $\mathcal{L}$ are smooth respectively at $0$ and $\psi_1(0)$, then $\psi_1'(0)\neq 0$ by \cite[Thm.\,3.9]{Po-92}.  We have analogous arguments for the analyticity of $\psi_2$ around $0$.
	
	We move to the case $p_{1,1}=0$ and $p_{0,1}>p_{1,0}$, where $0\in\mathcal{S}_1^+,\mathcal{S}_2^-$. Let $y_0$ denote the intersection of $\mathbb{R}$ and $\mathcal{S}_2\setminus\{1\}$. By the same arguments as in the previous case, $\psi_1(X(y))$ is an analytic continuation of $\psi_2(y)$ around $y_0$. Since the Taylor expansion of $X(y)$ at $y_0$ possesses all real coefficients, then the extension of $X(y)$ along the segment $[0,y_0]$ is also real and cannot escape $\mathcal{S}_1^+$ (because $\vert X(y)\vert <\vert y\vert $ for all $y\in(0,y_0)$ close enough to $y_0$ by Lemma~\ref{lem: X(y) around 0}). Thus, $\psi_1(X(y))$ is well defined on $[0,y_0]$ and forms an analytic continuation of $\psi_2(y)$. Hence, $\psi_2(0) = \psi_1(X(0))=\psi_1(0)\in\mathcal{L}^+$ and $\psi_2'(0) = \psi_1'(X(0))X'(0)\neq 0$.
\end{proof}

\section{Proof of the main theorems}\label{sec: proof-main-thms}

In this section, we will outline the proof of the main theorems, but not present them in detail since the strategy is similar as in the symmetric case of \cite{HoRaTa-20}. From the functional equation~\eqref{eq: functional-eq} and the conformal mappings in Section~\ref{sec: conformal welding}, one can construct a boundary value problem (Lemma~\ref{lem: BVP}) whose polynomial solutions form the class of harmonic functions $(h_n)_{n\in\mathbb{N}}$ in Theorem~\ref{thm: main_intro-1}. Such harmonic functions are shown to satisfy the following features: first, at any point $(i,j)\in\mathbb{N}^2$, for all $n$ large enough, $h_n(i,j)=0$ (Lemma~\ref{lem: h_n(i,j)=0}); second, $\{h_n(\cdot,1)\}_{n\geq 1}$ forms a basis of the space $\{h(\cdot,1): h \text{ is harmonic}\}$ (Lemma~\ref{lem: unique sequence}). Consequently, any infinite linear combination $\sum_{n\geq 1} a_n h_n$ with $\{a_n\}_{n\geq 1}\subset\mathbb{R}$, when evaluated at any point $(i,j)\in\mathbb{N}^2$, possesses a finite value. On the other hand, any discrete harmonic function can be expressed uniquely as an infinite sum $\sum_{n\geq 1} a_n h_n$ with $\{a_n\}_{n\geq 1}\subset\mathbb{R}$ (Theorem~\ref{thm: main_intro-2}).

We first set
\begin{equation}\label{eq: F(t) sectinal}
	F(t):=\begin{cases}
		K{\cdot} H(\psi_1^{-1}(t),0) - \frac{1}{2}K{\cdot} H(0,0), & \text{if } t\in\mathcal{L}^+,\\
		-K{\cdot} H(0,\psi_2^{-1}(t)) + \frac{1}{2}K{\cdot} H(0,0), & \text{if } t\in\mathcal{L}^-.\\
	\end{cases}
\end{equation}

\begin{figure}[t]
	\centering
	\begin{tikzpicture}
	\node at (0,0) {\includegraphics[scale=0.2]{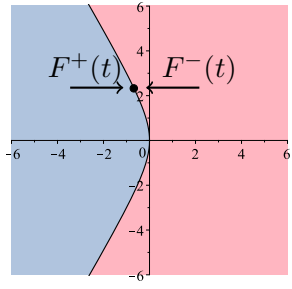}};
	
	\draw[->,thick] (-1,0.7) -- (-0.3,0.7);
	\draw[->,thick] (0.7,0.7) -- (0,0.7);
	\node at (-0.8,0.95) {$F^+(t)$};
	\node at (0.7,0.95) {$F^-(t)$};
	\end{tikzpicture}
	\caption{Description of the BVP in Lemma~\ref{lem: BVP}: If $F$ is analytic on $\mathcal{L}^+$ (blue domain) and $\mathcal{L}^-$ (red domain), and continuous on $\mathcal{L}$ (i.e., $F^+(t)=F^-(t)$ for all $t\in\mathcal{L}$), then $F$ is analytic on $\mathbb{C}$.}
	\label{fig: BVP}
\end{figure}

The following lemma is a direct consequence of the functional equation \eqref{eq: functional-eq}.
\begin{lemma}\label{lem: BVP}
	Assume \ref{assumption: H1}--\ref{assumption: H5}, \ref{assumption: monotonic}--\ref{assumption: alpha'(z)}, and that $H(x,0)$ and $H(0,y)$ have radii of convergence equal to or greater than $1$. Then $F$ defined in \eqref{eq: F(t) sectinal} satisfies the following BVP (see Figure~\ref{fig: BVP}):
	\begin{enumerate}[label=\textnormal{(\roman{*})},ref=\textnormal{(\roman{*})}]

	\item $F$ is analytic on $\mathcal{L}^+$ and admits a continuous extension $F^+$ to $\mathcal{L}$;
	
	\item $F$ is analytic on $\mathcal{L}^-$ and admits a continuous extension $F^-$ to $\mathcal{L}$;
	
	\item For all $t\in\mathcal{L}$,
	\begin{equation}\label{eq: F^+ = F^-}
		F^+(t)-F^-(t)=0.
	\end{equation}
	\end{enumerate}

	Consequently, if $F$ is bounded at infinity, then $F$ is the zero function and so is the associated harmonic function. On the other hand, if $F$ has a pole of order $n>0$ at infinity, then $F$ is a polynomial of degree $n$ satisfying:
	\begin{equation}\label{eq: condition of F(t)}
		F(\psi_1(0))=-F(\psi_2(0))=\frac{K\cdot H(0,0)}{2} = -\frac{1}{2}p_{1,1}h(1,1).
	\end{equation}
\end{lemma}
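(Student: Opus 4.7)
The plan is to verify items (i)--(iii) by direct computation, glue $F$ into an entire function on $\mathbb{C}$ via Morera's theorem, and then read off the dichotomy (bounded vs.\ pole of order $n$ at infinity) from Liouville's theorem together with an explicit evaluation of $F$ at $\psi_1(0)$ and $\psi_2(0)$.

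For items (i)--(ii), I would note that $K(x,0)=-\sum_{k\leq 1}p_{k,1}x^{-k+1}$ is a power series in $x$ convergent on $\overline{\mathcal{C}^+}$ (even entire if jumps are bounded), and similarly for $K(0,y)$. Combined with the hypothesis that $H(x,0),H(0,y)$ have radii of convergence at least $1$, and the fact that $\psi_1^{-1},\psi_2^{-1}$ map $\mathcal{L}^+,\mathcal{L}^-$ conformally onto $\mathcal{S}_1^+,\mathcal{S}_2^+\subset\mathcal{C}^+$ with $\mathcal{S}_1\setminus\{1\},\mathcal{S}_2\setminus\{1\}\subset\mathcal{C}^+$ (since $|\eta(s)|<1$ away from $s=\pm 1$), the pulled-back products are analytic on $\mathcal{L}^\pm$; Carath\'eodory's theorem combined with Lemma~\ref{lem: psi1-psi2}\ref{item: asymptotic psi1 psi2}, which confines $\psi_j^{-1}(t)\to 1$ to the single limit $t\to\infty$, then yields the continuous extensions $F^\pm$ on $\mathcal{L}$. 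For item (iii), the conformal-welding identity $\psi_1^+\circ X=\psi_2^+$ constructed in Section~\ref{sec: conformal welding} supplies, for every $t\in\mathcal{L}$, a pair $(x,y)=(\psi_1^{-1}(t),\psi_2^{-1}(t))\in\mathcal{K}$, so $K(x,y)=0$. Substituting into the functional equation~\eqref{eq: functional-eq} gives
\[
K(x,0)H(x,0)+K(0,y)H(0,y)=K(0,0)H(0,0),
\]
which, after plugging into~\eqref{eq: F(t) sectinal}, is exactly $F^+(t)-F^-(t)=0$. Morera's theorem applied to small contours crossing $\mathcal{L}$ then promotes $F$ to an entire function on $\mathbb{C}=\mathcal{L}^+\cup\mathcal{L}\cup\mathcal{L}^-$.

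For the dichotomy, I would first compute, by direct substitution into~\eqref{eq: F(t) sectinal}, the values $F(\psi_1(0))=\tfrac12K(0,0)H(0,0)$ and $F(\psi_2(0))=-\tfrac12K(0,0)H(0,0)$; in the degenerate subcases $p_{1,1}=0$ in which $\psi_1(0)$ or $\psi_2(0)$ does not naturally lie in $\mathcal{L}^\pm$, these evaluations are legitimized by the analytic continuations built in Proposition~\ref{prop: psi_1,2(0)}. If $F$ is bounded at infinity, Liouville forces $F\equiv c$; the identity $F(\psi_1(0))=-F(\psi_2(0))$ then yields $c=0$, and back-substitution into $F\equiv 0$ on $\mathcal{L}^+$ gives $K(x,0)H(x,0)\equiv\tfrac12K(0,0)H(0,0)$, which at $x=0$ forces $K(0,0)H(0,0)=0$, hence $K(x,0)H(x,0)\equiv 0$; since $K(x,0)\not\equiv 0$ by Assumption~\ref{assumption: H2} we obtain $H(x,0)\equiv 0$, analogously $H(0,y)\equiv 0$, and the functional equation then kills $H$, hence $h\equiv 0$. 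If instead $F$ has a pole of order $n>0$ at infinity, the classical extension of Liouville (entire with pole of order $n$ at $\infty$ forces a polynomial of degree $n$, read off from the Laurent expansion at $\infty$) delivers the polynomial conclusion, and~\eqref{eq: condition of F(t)} follows from the two explicit evaluations combined with $K(0,0)=-p_{1,1}$ and $H(0,0)=h(1,1)$.

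The main technical care is bookkeeping rather than conceptual: one has to verify that the approach $\psi_j^{-1}(t)\to\partial\mathcal{C}^+$ occurs only as $t\to\infty$ (so that $H(x,0),H(0,y)$ remain in their domains of analyticity along the entire finite part of $\mathcal{L}$), and one must handle the positions of $\psi_1(0),\psi_2(0)$ with diligence in the three subcases ($p_{1,1}\ne 0$; $p_{1,1}=0$ with $p_{0,1}=p_{1,0}$; $p_{1,1}=0$ with $p_{0,1}\neq p_{1,0}$), invoking Proposition~\ref{prop: psi_1,2(0)} whenever $0$ fails to lie in both $\mathcal{S}_1^+$ and $\mathcal{S}_2^+$.
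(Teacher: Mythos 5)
Your proposal is correct and follows essentially the same route as the paper, which itself only sketches the argument and defers the details to \cite{HoRaTa-20}: the boundary identity comes from evaluating the functional equation \eqref{eq: functional-eq} on $\mathcal{K}$ via the welding $\psi_1^+\circ X=\psi_2^+$, Morera's theorem glues $F$ into an entire function, Liouville (resp.\ its polynomial extension) gives the dichotomy, and the normalization \eqref{eq: condition of F(t)} is read off from $F(\psi_1(0))$ and $F(\psi_2(0))$, with Proposition~\ref{prop: psi_1,2(0)} handling the subcases where $0$ is not interior to both curves. Your write-up is in fact more detailed than the paper's own proof, and the computations you indicate (e.g.\ $K(0,0)=-p_{1,1}$, $H(0,0)=h(1,1)$) check out.
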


\begin{proof}
	The general solutions of the BVP are in fact entire functions, and under the conditions on the growth at infinity, one can infer these solutions as constant functions or polynomials (see more details in \cite[Lem.\,9 and Cor.\,10]{HoRaTa-20}). If $F$ is a constant function (resp.~a polynomial), evaluating $F$ at $\psi_1(0)$ and $\psi_2(0)$ yields that $F$ is the zero function (resp.~$F$ satisfies Eq.~\eqref{eq: condition of F(t)}).
\end{proof}

\begin{proof}[Proof of Theorem~\ref{thm: main_intro-1}]
	As one can easily verify, the polynomials introduced in \eqref{eq: family-polynomials} are solutions of the BVP in Lemma~\ref{lem: BVP} and form a basis of $\mathbb{R}[X]$. Consequently, the functions $H_n(x,y)$ defined in \eqref{eq: H_n(x,y)} satisfy the functional equation \eqref{eq: functional-eq}. The remaining work is to prove that these $H_n(x,y)$ are bivariate power series around $(0,0)$, which always holds true in the case $p_{1,1}\not= 0$ (because $K(0,0)\not=0$ in this case). In the case $p_{1,1}=0$, since $\psi_1(0)=\psi_2(0)$ by Prop.~\ref{prop: psi_1,2(0)}, then $H_n(x,y)$ can be rewritten as
	\begin{equation*}
		H_n(x,y) = \frac{\psi_1(x)-\psi_2(y)}{K(x,y)}\sum_{k=0}^{n-1}(\psi_1(x)-\psi_1(0))^k (\psi_2(y)-\psi_1(0))^{n-1-k}.
	\end{equation*}
	Since $p_{0,1}$ and $p_{1,0}$ cannot simultaneously vanish, we can assume further $0\not= p_{0,1}\geq p_{1,0}$. Hence, on a neighborhood of $(0,0)$, $\partial_x K(x,y)\not=0$, which implies that $X(y)$ is the unique solution of $K(x,y)$ (as a function of $x$) around $0$ by the implicit function theorem. Moreover, since $\partial_x(\psi_1(x)-\psi_2(y))\not=0$ on a neighborhood of $(0,0)$, and $\psi_1(X(y))=\psi_2(y)$ around $0$ (recall from the proof of Prop.~\ref{prop: psi_1,2(0)} that $\psi_1(X(y))$ is an analytic continuation of $\psi_2(y)$ around $0$), then $X(y)$ is the unique solution of $\big(\psi_1(x)-\psi_2(y)\big)$ (as functions of $x$) around $0$ by the implicit function theorem. Thanks to the Weierstrass preparation theorem for analytic functions in several variables (see \cite[Chap.~2, Sec.~B, Thm.~2]{GuRo-65}), we can write:
	\begin{equation*}
		\frac{\psi_1(x)-\psi_2(y)}{K(x,y)} = \frac{u(x,y)(x-X(y))}{v(x,y)(x-X(y))} = \frac{u(x,y)}{v(x,y)},
	\end{equation*}
	where $u(x,y)$ and $v(x,y)$ are analytic around $(0,0)$ and not vanishing at $(0,0)$. Thus, $(\psi_1(x)-\psi_2(y))/K(x,y)$ is analytic around $(0,0)$, and so is $H_n(x,y)$.
	
	We move to the properties of $H_n(x,y)$. Since $K(x,y)$ has no solution in $\mathcal{S}_1^+\times\mathcal{S}_2^+$ (Prop.\,\ref{prop: extend-solutions-intro}\ref{item: Prop1-item2}), then $H_n(x,y)$ is analytic in this domain. And lastly, one can compare the Laplace transform of $h_{n,\sigma}(x,y)$, which can be explicitly computed, with the Laplace transform of $h_n(x,y)$, which can be approximated through $H_n(x,y)$ (see \cite[Sec.~4.2, Prop.~11]{HoRaTa-20}).
\end{proof}

Although the proof of Theorem~\ref{thm: main_intro-1} shares the same framework as that  of \cite[Thm.\,1]{HoRaTa-20} (the symmetric case), there are some major differences that we want to emphasize:
\begin{itemize}
	\item In the symmetric case of \cite{HoRaTa-20}, the conformal mappings $\psi_1$ and $\psi_2$ can always be constructed. Further, the shift function $\alpha$ is the identity, we thus do not need the step of solving the conformal welding problem with quasisymmetric shift (Subsection~\ref{subsec: conformal welding}). In the non-symmetric case, we have to restrict the analysis under Assumptions~\ref{assumption: monotonic}--\ref{assumption: alpha'(z)} to ensure the existence of such mappings;
	
	\item As presented above, the proof is mostly based on the behaviors of $\psi_1$, $\psi_2$, $K(x,y)$ around $(0,0)$. In the case $p_{1,1}=0$, $p_{0,1}>p_{1,0}$, we have $0\in\mathcal{S}_2^-$, which does not appear in the symmetric case. We therefore need analytic continuation arguments (Prop.\,\ref{prop: extend-solutions-intro}\ref{item: Prop1-item1}, Prop.\,\ref{prop: psi_1,2(0)}) to show that $\psi_2(z)$ is well defined and analytic around $0$, and $K(x,y)$ has no solution in $\mathcal{S}_1^+\times\mathcal{S}_2^+$ (Prop.\,\ref{prop: extend-solutions-intro}\ref{item: Prop1-item2}).
\end{itemize}

In order to prove Theorem~\ref{thm: main_intro-2}, we recall from \cite{HoRaTa-20} the following two lemmas presenting important properties of the harmonic functions $\{h_n\}_{n\geq 1}$ in Theorem~\ref{thm: main_intro-1}.

\begin{lemma}\label{lem: h_n(i,j)=0}
	The harmonic function $h_n$ defined in Theorem~\ref{thm: main_intro-1} satisfies the following assertions:
	\begin{itemize}
		\item In the case $p_{1,1}=0$,
			\begin{enumerate}[label=\textnormal{(\roman{*})},ref=\textnormal{(\roman{*})}]
				\item For all $i,j\geq 1$ such that $i+j\leq n$, we have $h_n(i,j)=0$;
				\item For all $i,j\geq 1$ such that $i+j= n+1$, we have $h_n(i,j)\not= 0$.
			\end{enumerate} 
		\item In the case $p_{1,1}\not= 0$,
			\begin{enumerate}[label=\textnormal{(\roman{*})},ref=\textnormal{(\roman{*})}]
				\item For all $1\leq i,j\leq \lfloor n/2 \rfloor$, we have $h_n(i,j)=0$;
				\item $h_n(\lfloor n/2 +1 \rfloor,1 ), h_n(1,\lfloor n/2 +1 \rfloor )\neq 0$.
			\end{enumerate}
	\end{itemize}
\end{lemma}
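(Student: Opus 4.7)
The plan is to read off the Taylor coefficients of
$$H_n(x,y) = \frac{P_n(\psi_1(x)) - P_n(\psi_2(y))}{K(x,y)}$$
at $(0,0)$ by tracking two things: the order of vanishing of $P_n(\psi_i(\cdot))$ at the origin, and the local behavior of $1/K$ there. The analysis splits naturally by the parity of $p_{1,1}$, and in each case the vanishing in (i) is established by showing the numerator has too high an order at $(0,0)$, while (ii) is obtained by explicitly computing the first non-vanishing coefficient.

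For the case $p_{1,1} = 0$, Proposition~\ref{prop: psi_1,2(0)} gives $\psi_1(0) = \psi_2(0) =: c$, so $P_n(X) = (X-c)^n$ regardless of parity. The identity
$$(\psi_1(x) - c)^n - (\psi_2(y) - c)^n = (\psi_1(x) - \psi_2(y))\sum_{k=0}^{n-1}(\psi_1(x)-c)^k(\psi_2(y)-c)^{n-1-k},$$
combined with the fact (established in the proof of Theorem~\ref{thm: main_intro-1} via the Weierstrass preparation theorem) that $(\psi_1(x)-\psi_2(y))/K(x,y)$ is analytic and non-vanishing at $(0,0)$, reduces the analysis to the summation. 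Writing $\psi_1(x) - c = x\tilde u(x)$ and $\psi_2(y) - c = y\tilde v(y)$ with $\tilde u(0), \tilde v(0) \neq 0$ (by Proposition~\ref{prop: psi_1,2(0)}), each summand is $x^k y^{n-1-k}$ times an analytic function non-vanishing at the origin. Hence $H_n$ has total degree at least $n-1$ at $(0,0)$, yielding (i), and for $i+j = n+1$ the coefficient of $x^{i-1}y^{j-1}$ is extracted from the single summand $k=i-1$ at leading order, giving a non-zero multiple of $\tilde u(0)^{i-1}\tilde v(0)^{j-1}$, which proves (ii).

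For the case $p_{1,1} \neq 0$, $K(0,0) = -p_{1,1} \neq 0$, so $1/K$ is analytic at the origin with nonzero constant term $-1/p_{1,1}$. The conformal mappings $\psi_1, \psi_2$ are analytic at $0$ with $\psi_1(0) \in \mathcal{L}^+$ and $\psi_2(0) \in \mathcal{L}^-$ distinct. The crucial structural remark is that $P_n(\psi_1(x))$ depends only on $x$ and $P_n(\psi_2(y))$ only on $y$, so the numerator carries no mixed monomial $x^a y^b$ with $a, b \geq 1$. Reading the multiplicities of $\psi_1(0)$ and $\psi_2(0)$ as roots of $P_n$ from \eqref{eq: family-polynomials} and using $\psi_i'(0)\neq 0$, $P_n(\psi_1(x))$ vanishes to precise order at $x=0$ with a non-zero leading coefficient of the form $\psi_1'(0)^{a}(\psi_1(0)-\psi_2(0))^{b}$ (analogously for $y$). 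Extracting coefficients of $H_n$ via a direct convolution with the Taylor expansion of $1/K$, a contribution to $h_n(i,j)$ from the $x$-part requires $i-1$ to exceed the vanishing order of $P_n(\psi_1(x))$, and likewise for $y$; this yields the vanishing region in (i). For (ii), the minimal admissible axial coefficient reduces to the explicit non-zero product of a leading coefficient and $k_{0,0} = -1/p_{1,1}$.

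The main technical point is the subcase $p_{1,1}=0$, $p_{1,0} \neq p_{0,1}$, where $0$ lies outside one of $\mathcal{S}_1, \mathcal{S}_2$; there $\psi_2$ (say) is not a priori defined at $0$, so the preceding argument only applies after invoking the analytic continuation of $\psi_2$ across $0$ furnished by Proposition~\ref{prop: psi_1,2(0)}. A secondary minor point is verifying that in the factorization used for $p_{1,1}=0$, the leading-order coefficient obtained is genuinely nonzero; this ultimately rests on $\psi_1'(0),\psi_2'(0)\neq 0$, which is again guaranteed by Proposition~\ref{prop: psi_1,2(0)}.
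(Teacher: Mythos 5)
Your overall strategy --- extracting the Taylor coefficients of $H_n$ at the origin from the vanishing orders of $P_n(\psi_1(x))$ and $P_n(\psi_2(y))$ together with the local behaviour of $1/K$ --- is exactly the intended one; the paper itself defers the details to the symmetric-case reference, where the same coefficient bookkeeping is carried out. Your treatment of the case $p_{1,1}=0$ is complete and correct: the factorization through $\psi_1(x)-\psi_2(y)$, the non-vanishing of $(\psi_1(x)-\psi_2(y))/K(x,y)$ at the origin coming from the Weierstrass argument in the proof of Theorem~\ref{thm: main_intro-1}, and $\psi_1'(0),\psi_2'(0)\neq 0$ from Proposition~\ref{prop: psi_1,2(0)} give both (i) and (ii) there, including the subcase $p_{1,0}\neq p_{0,1}$ where $\psi_2$ must first be continued to $0$.

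In the case $p_{1,1}\neq 0$, however, the phrase ``the minimal admissible axial coefficient'' hides a real discrepancy for odd $n$. Write $n=2m+1$ with $m\geq 1$. By \eqref{eq: family-polynomials}, $\psi_1(0)$ is a root of $P_n$ of multiplicity $m+1$ while $\psi_2(0)$ has multiplicity $m$, so $P_n(\psi_1(x))$ vanishes to order exactly $m+1$ at $x=0$ and $P_n(\psi_2(y))$ to order exactly $m$ at $y=0$. Setting $y=0$ in \eqref{eq: H_n(x,y)} gives $H_n(x,0)=P_n(\psi_1(x))/K(x,0)$ with $K(0,0)=-p_{1,1}\neq 0$, so $H_n(x,0)$ vanishes to order exactly $m+1$; hence $h_n(i,1)=0$ for all $i\leq m+1$ and the first non-zero entry on the horizontal axis is $h_n(m+2,1)$, whereas assertion (ii) claims $h_n(\lfloor n/2+1\rfloor,1)=h_n(m+1,1)\neq 0$. (On the vertical axis the same computation does give $h_n(1,m+1)\neq 0$, and for even $n$ and for $n=1$ both claims check out.) So your own method, executed precisely, contradicts half of (ii) for odd $n\geq 3$: the correct index on the first axis is $\lfloor n/2\rfloor+2$. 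This asymmetry is harmless downstream --- the alternating triangular scheme behind Lemma~\ref{lem: unique sequence} still closes --- but you must state and prove the corrected indices rather than assert (ii) verbatim; as written, the step ``the minimal admissible axial coefficient reduces to an explicit non-zero product'' fails at $i=\lfloor n/2+1\rfloor$ for odd $n$. A minor related imprecision: contributions from the $x$-part require $i-1$ to be \emph{at least} (not to ``exceed'') the vanishing order of $P_n(\psi_1(x))$.
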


\begin{lemma}\label{lem: unique sequence}
	For any discrete harmonic function $h(i,j)$, we have: 
	\begin{itemize}
		\item In the case $p_{1,1}=0$, there exist unique sequences $\{a_n\}_{n\geq 1},\{b_n\}_{n\geq 1}\subset\mathbb{R}$ such that:
		\begin{equation*}\label{eq: unique sequence p_11=0}
			h(i,1) = \sum_{n\geq 1} a_nh_n(i,1)\quad\text{and}\quad h(1,i) = \sum_{n\geq 1}b_n h_n(1,i),\quad\text{for all } i\in \mathbb{N};
		\end{equation*}
		\item  In the case $p_{1,1}\not= 0$, there exists a unique sequence $\{a_n\}_{n\geq 1}\subset\mathbb{R}$ such that:
		\begin{equation*}\label{eq: unique sequence p_11not0}
			h(i,1) = \sum_{n\geq 1} a_nh_n(i,1)\quad\text{and}\quad h(1,i) = \sum_{n\geq 1}a_n h_n(1,i),\quad\text{for all } i\in \mathbb{N},
		\end{equation*}
	\end{itemize}
	where $\{h_n\}_{n\geq 1}$ are defined in Theorem~\ref{thm: main_intro-1}.
\end{lemma}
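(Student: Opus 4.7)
The plan is to convert the required identity into a finite-dimensional linear system at each boundary index $i$, using the vanishing pattern of $h_n$ on the axes furnished by Lemma~\ref{lem: h_n(i,j)=0}, and then to solve these systems inductively. The sparsity in Lemma~\ref{lem: h_n(i,j)=0} is precisely what makes each sum $\sum_{n\geq 1}a_n h_n(i,\cdot)$ reduce to a finite sum when evaluated on the axes, so there is no convergence issue and everything is genuinely algebraic.

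In the case $p_{1,1}=0$, Lemma~\ref{lem: h_n(i,j)=0} gives $h_n(i,1)=0$ for $n\geq i+1$ and $h_i(i,1)\neq 0$, so at each $i$ the required identity reduces to the lower-triangular system
\[
h(i,1)=\sum_{n=1}^{i}a_n h_n(i,1),
\]
with nonzero diagonal entries $\{h_i(i,1)\}_{i\geq 1}$. I would solve it uniquely by forward substitution, $a_i=h_i(i,1)^{-1}\bigl(h(i,1)-\sum_{n<i}a_n h_n(i,1)\bigr)$; the identical argument applied to the second axis produces the sequence $\{b_n\}$.

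In the case $p_{1,1}\neq 0$, Lemma~\ref{lem: h_n(i,j)=0} gives $h_n(i,1)=h_n(1,i)=0$ for $n\geq 2i$, while the four values $h_{2i-2}(i,1)$, $h_{2i-1}(i,1)$, $h_{2i-2}(1,i)$, $h_{2i-1}(1,i)$ are all nonzero (the latter two via the nonvanishing clause of Lemma~\ref{lem: h_n(i,j)=0} applied to $n=2i-2$ and $n=2i-1$). The base step $i=1$ uniquely fixes $a_1$ by $h(1,1)=a_1 h_1(1,1)$. For $i\geq 2$, assuming $a_1,\ldots,a_{2i-3}$ have already been determined, the equations at $(i,1)$ and $(1,i)$ become a $2\times 2$ system in the two new unknowns $(a_{2i-2},a_{2i-1})$ with coefficient matrix
\[
M_i:=\begin{pmatrix} h_{2i-2}(i,1) & h_{2i-1}(i,1) \\ h_{2i-2}(1,i) & h_{2i-1}(1,i) \end{pmatrix}.
\]
Existence and uniqueness of the step then amount to $\det M_i\neq 0$.

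The main obstacle is precisely this non-vanishing. My plan is to exploit the polynomial recursion $P_{2n+1}(X)=(X-\psi_1(0))P_{2n}(X)$, which yields the factorisations
\[
H_{2n+1}(x,0)=\bigl(\psi_1(x)-\psi_1(0)\bigr)H_{2n}(x,0),\qquad H_{2n+1}(0,y)=\bigl(\psi_2(y)-\psi_1(0)\bigr)H_{2n}(0,y)
\]
for $n\geq 1$. Extracting the coefficient of $x^{i-1}$ and $y^{i-1}$ relates the second column of $M_i$ to a shifted version of the first column of $M_{i-1}$ (and of the previously known boundary values), with proportionality factors built from $\psi_1(0),\psi_1'(0),\psi_2(0),\psi_2'(0)$. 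Invoking Proposition~\ref{prop: psi_1,2(0)} to guarantee non-vanishing of these derivatives, together with $\psi_1(0)\neq\psi_2(0)$ (which holds in this case because $\psi_1(0)\in\mathcal L^+$ and $\psi_2(0)\in\mathcal L^-$ lie on opposite sides of $\mathcal L$), I expect the determinant to simplify, inductively, to a nonzero multiple of $(\psi_2(0)-\psi_1(0))\cdot h_{2i-2}(i,1)\cdot h_{2i-2}(1,i)$, closing the induction. This bookkeeping is parallel to the calculational core used in the symmetric setting of \cite{HoRaTa-20}, and the non-symmetric adaptation relies only on the fact that all the building blocks of the above factorisations remain analytic and non-degenerate at $0$, which is exactly what Proposition~\ref{prop: psi_1,2(0)} provides.
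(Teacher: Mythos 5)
Your proposal takes the same route as the paper, which reduces the lemma to the solvability of the linear systems furnished by the vanishing pattern of Lemma~\ref{lem: h_n(i,j)=0} and defers the bookkeeping to \cite{HoRaTa-20}. The case $p_{1,1}=0$ is exactly right: the system is lower-triangular with nonzero diagonal entries $h_i(i,1)$, and forward substitution gives existence and uniqueness on each axis separately.

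In the case $p_{1,1}\neq 0$ your structure is correct, but one of your four nonvanishing claims is false, and noticing this actually completes (and simplifies) your argument. From your own factorisation $H_{2i-1}(x,0)=(\psi_1(x)-\psi_1(0))H_{2i-2}(x,0)$: since $H_{2i-2}(x,0)=\bigl(\psi_1(x)-\psi_1(0)\bigr)^{i-1}\bigl(\psi_1(x)-\psi_2(0)\bigr)^{i-1}/K(x,0)$ vanishes to order exactly $i-1$ at $x=0$ (using $K(0,0)=-p_{1,1}\neq 0$, $\psi_1'(0)\neq 0$ and $\psi_1(0)\neq\psi_2(0)$), the product vanishes to order $i$, so $h_{2i-1}(i,1)=0$, not $\neq 0$. (The printed nonvanishing clause of Lemma~\ref{lem: h_n(i,j)=0} is imprecise for odd $n$ on the first axis; the asymmetry traces back to the extra factor $(X-\psi_1(0))$ in $P_{2n+1}$.) Consequently $M_i$ is triangular and $\det M_i=h_{2i-2}(i,1)\,h_{2i-1}(1,i)=(\psi_2(0)-\psi_1(0))\,h_{2i-2}(i,1)\,h_{2i-2}(1,i)\neq 0$, which is exactly the value you predicted; equivalently, in the ordering $(1,1),(2,1),(1,2),(3,1),(1,3),\dots$ the whole system is triangular, each equation introducing one new unknown with nonzero coefficient. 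Two minor corrections: in this case the facts $\psi_1'(0),\psi_2'(0)\neq 0$ follow simply from conformality of $\psi_1,\psi_2$ at the interior point $0\in\mathcal{S}_1^+\cap\mathcal{S}_2^+$, not from Proposition~\ref{prop: psi_1,2(0)} (which concerns $p_{1,1}=0$); and $\psi_1(0)\neq\psi_2(0)$ holds for the reason you give.
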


By Lemma~\ref{lem: h_n(i,j)=0}, the linear systems of equations in Lemma~\ref{lem: unique sequence} are solvable and have unique solutions. We do not mention its detailed proof since it is similar as in \cite[Lem.\,14, 15]{HoRaTa-20}.

\begin{proof}[Proof of Theorem~\ref{thm: main_intro-2}]
	Observe first that any infinite sum $\sum_{n\geq 1} a_n h_n$, with $\{a_n\}_{n\geq 1}\subset \mathbb{R}$, evaluated at any point $(i,j)\in\mathbb{N}^2$, has a finite value, since $h_n(i,j)=0$ for all $n\geq i+j$ by Lemma~\ref{lem: h_n(i,j)=0}. Thus, the mapping $\Phi$ in Theorem~\ref{thm: main_intro-2} is well defined. The injectivity of $\Phi$ is implied by Lemma~\ref{lem: unique sequence}.
	
	Now let $h(i,j)$ be any harmonic function and $H(x,y)$ be its generating function. To show the surjectivity of $\Phi$, we prove that there exists a sequence $\{a_n\}_{n\geq 1}\subset\mathbb{R}$ such that $h(i,j)=\sum_{n\geq 1}a_nh_n(i,j)$ for all $(i,j)\in\mathbb{N}^2$.
	
	In the case $p_{1,1}\not=0$, let $\{a_n\}_{n\geq 1}$ be the sequence satisfying Lemma~\ref{lem: unique sequence} and $\widetilde{H}(x,y)$ be the generating function of $\sum_{n\geq 1}a_nh_n(i,j)$. We recall that $H(x,y)$ and $\widetilde{H}(x,y)$ are determined by Eq.~\eqref{eq: functional-eq}:
	\begin{align*}
		&H(x,y)=\frac{K\cdot H (x,0) + K\cdot H(0,y)- K\cdot H(0,0)}{K(x,y)},\\
		&\widetilde{H}(x,y)=\frac{K\cdot \widetilde{H} (x,0) + K\cdot \widetilde{H}(0,y)- K\cdot \widetilde{H}(0,0)}{K(x,y)}.
	\end{align*}
	Moreover, the quotients are well defined since $K(0,0)\not=0$. Lemma~\ref{lem: unique sequence} implies that $H(x,0)$ and $\widetilde{H}(x,0)$ coincide and so do $H(0,y)$ and $\widetilde{H}(0,y)$. Thus, $H(x,y)$ and $\widetilde{H}(x,y)$ also coincide and $h\equiv\sum_{n\geq 1}a_nh_n$.
	
	We move to the case $p_{1,1}=0$, and further assume $0\not=p_{0,1}\geq p_{1,0}$. Then the function $X(y)$ is well defined around $0$ (Prop.~\ref{prop: extend-solutions-intro}\ref{item: Prop1-item1}) and by the Weierstrass preparation theorem in the ring of formal power series $\mathbb{R}[[x,y]]$, one can write:
	\begin{equation*}
		K(x,y) = u(x,y)(x-X(y)),
	\end{equation*}
	where $u$ is invertible in $\mathbb{R}[[x,y]]$. By Lemma~\ref{lem: unique sequence}, there exists a unique sequence $\{a_n\}_{n\geq 1}$ such that $h(i,1)=\sum_{n\geq 1}a_nh_n(i,1)$ for all $i\in\mathbb{N}$. Let $\widetilde{H}(x,y)$ denote the generating function of $\sum_{n\geq 1}a_nh_n(i,1)$, we then have $H(x,0)=\widetilde{H}(x,0)$. Consider the division of $K\cdot H(x,0)$ (resp.~$K\cdot \widetilde{H}(x,0)$) by $(x-X(y))$. Applying the Weierstrass division theorem to $K(x,y)$ in the ring $\mathbb{R}[[x,y]]$, there exists a unique formal series $G\in\mathbb{R}[[y]]$ (resp.~$\widetilde{G}\in\mathbb{R}[[y]]$) such that $ K\cdot H(x,0) + G(y)$ (resp.~$ K\cdot \widetilde{H}(x,0) + \widetilde{G}(y)$) is divisible by $x-X(y)$. Since $H(x,0)$ and $\widetilde{H}(x,0)$ coincide, then so do $G(y)$ and $\widetilde{G}(y)$. And thus, the quotients of $ K\cdot H(x,0) + G(y)$ and $ K\cdot \widetilde{H}(x,0) + \widetilde{G}(y)$ by $K(x,y)$, which are respectively $H(x,y)$ and $\widetilde{H}(x,y)$, also coincide. This shows the surjectivity of $\Phi$ in the case $p_{1,1}=0$.
\end{proof}

\section{Small jump random walks}
In this section, we take a closer look to models associated with small jump random walks, namely, $p_{i,j}=0$ if $\vert i\vert \geq 2$ or $\vert j\vert \geq 2$. Our goal is to derive explicit expressions of the conformal mappings $\psi_1$ and $\psi_2$ in Theorem~\ref{thm: main_intro-1}. Such models have been carefully studied in \cite[Sec.\,6.5]{FaIaMa-17}, where the kernel's zero set can be parametrized through a rational uniformization. We first recall some facts about the uniformization, then construct conformal mappings. At the end of the section, we also give a concrete example where the shift function and  all the conformal mappings in Section~\ref{sec: conformal welding} can be explicitly expressed.

\begin{figure}[t]
	\centering
	\begin{tikzpicture}
		\node at (0,2) {\includegraphics[scale=0.2]{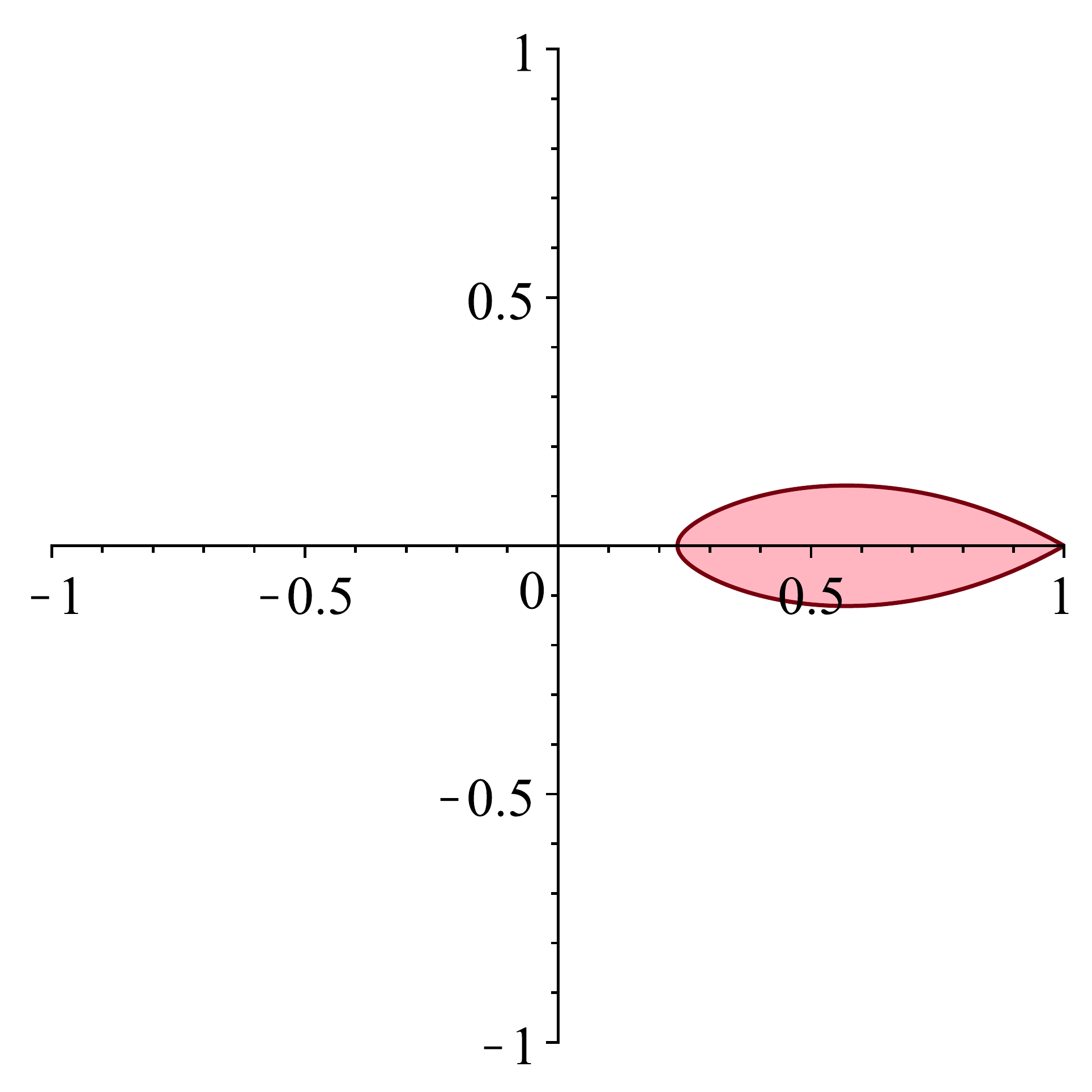}};
		\node at (0,-2) {\includegraphics[scale=0.2]{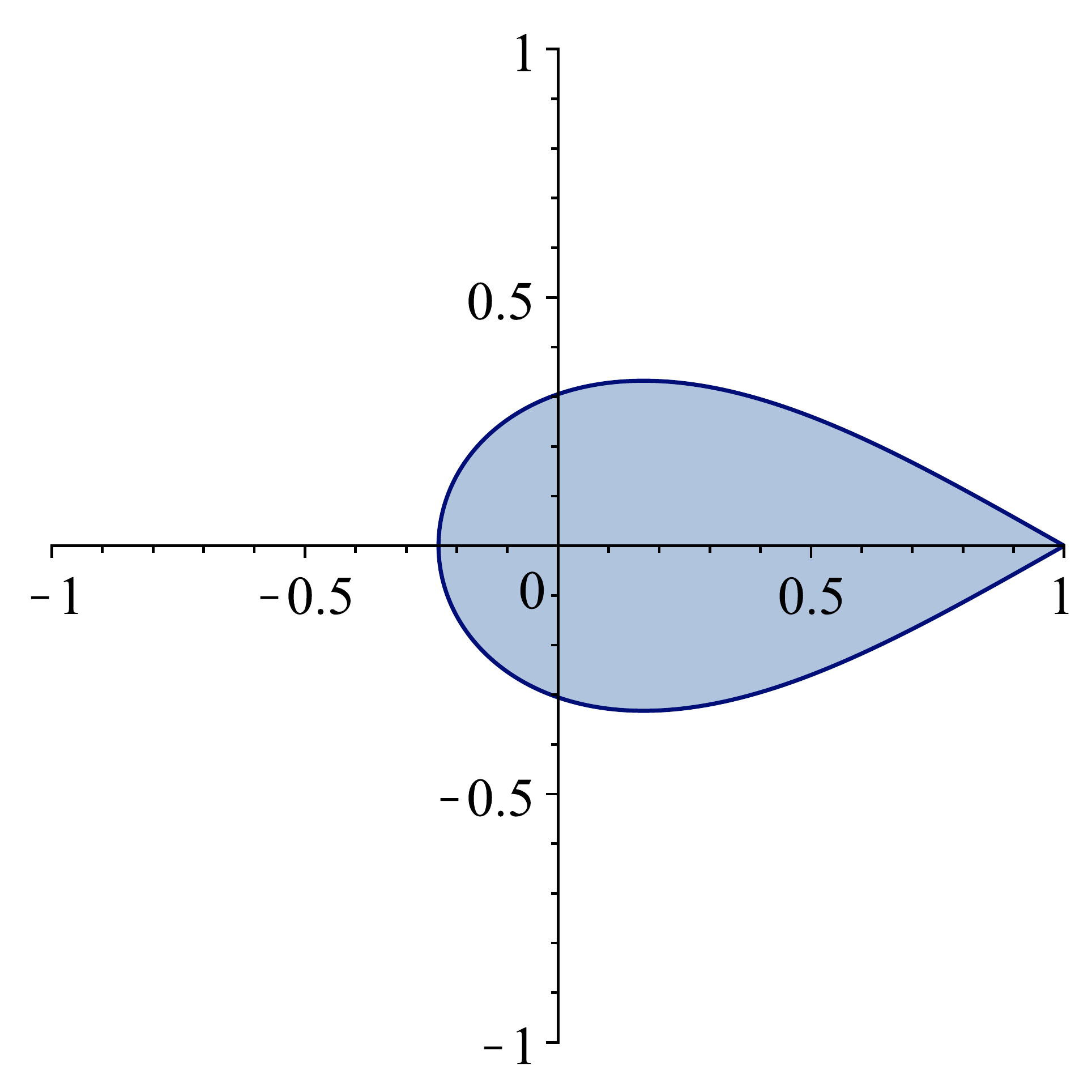}};
		\node at (5,0) {\includegraphics[scale=0.2]{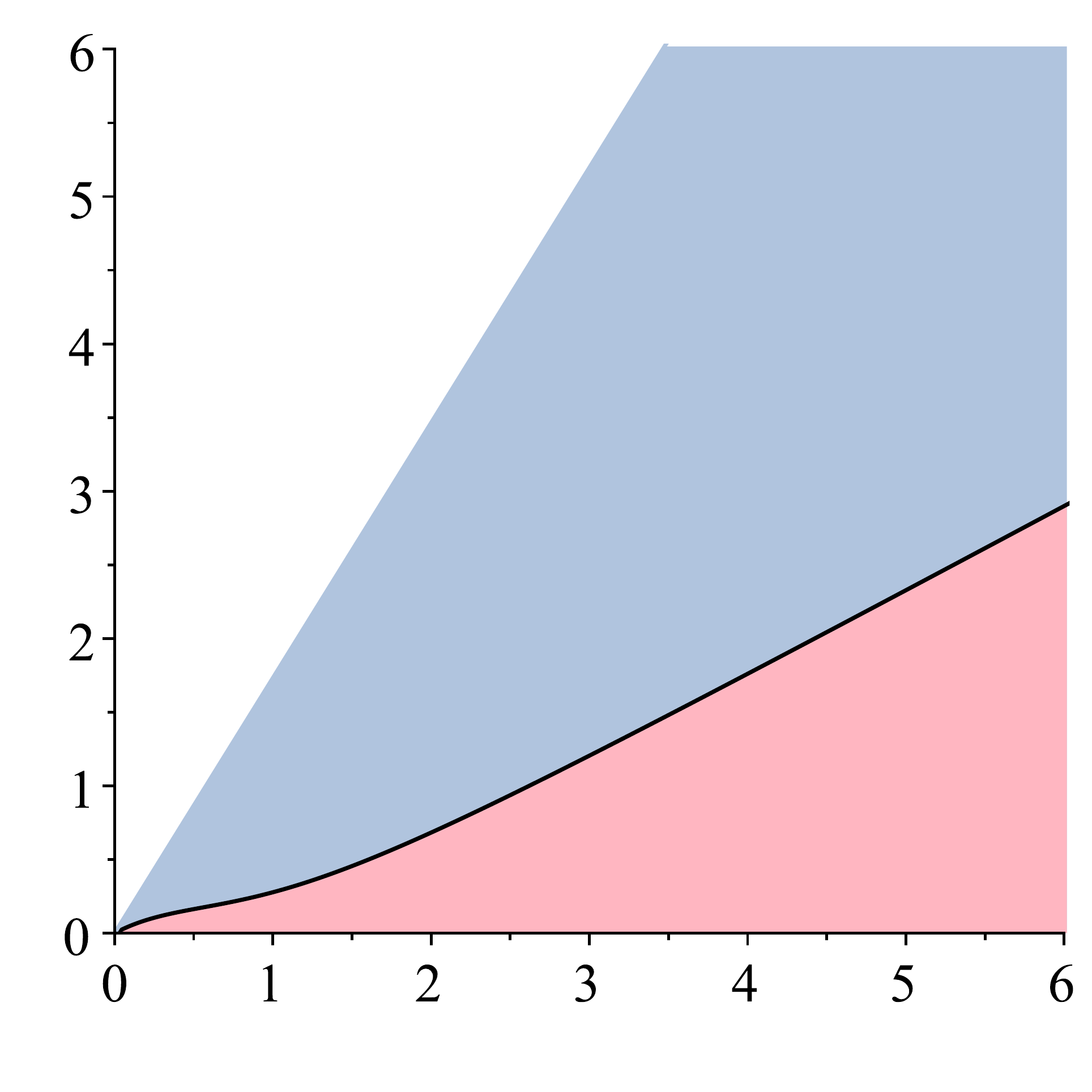}};
		\node at (10.3,0) {\includegraphics[scale=0.2]{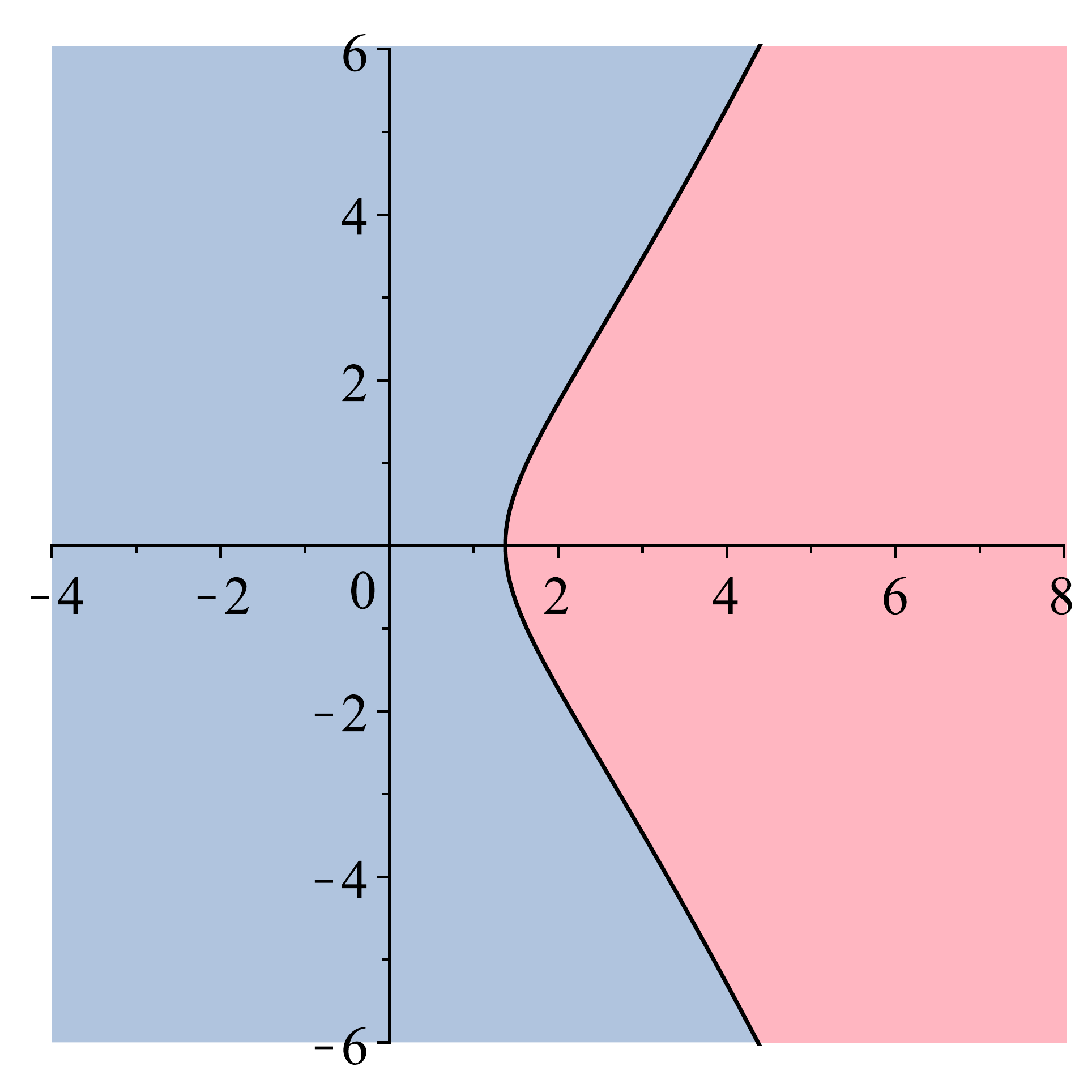}};
		
		\draw[->,thick] (1,2.1) .. controls (2.7,3) and (5.5,2) ..(6.2,-1);
		\draw[->,thick] (0.5,-2.4) .. controls (2,-3) and (4,-3) .. (4.5,-0.5);
		\draw[->,thick] (7,0)--(8.3,0);
		
		\node at (4,2.5) {$x^{-1}(s)$};
		\node at (4,-2.5) {$y^{-1}(s)$};
		\node at (7.7,0.3) {$z^3+z^{-3}$};
	\end{tikzpicture}
	\caption{The model $p_{1,0}=p_{0,-1}=p_{-1,1}=1/3$ (also known as the tandem walk, due to its links with queuing theory): the inverses $x^{-1}(s)$ and $y^{-1}(s)$ map respectively and conformally $\mathcal{S}_1^+\setminus[x_1,1]$ (red domain in the top left figure) and $\mathcal{S}_2^+\setminus[x_1,1]$ (blue domain in the bottom left figure) onto lower (in red) and upper (in blue) domains in the cone; the mapping $\omega(z):=z^3+z^{-3}$ maps the cone onto the plane $\mathbb{C}$ cut along some segments; $\psi_1(z)$ and $\psi_2(z)$ in Theorem~\ref{thm: main_intro-1} can be chosen as $\psi_1=\omega\circ x^{-1}$ and $\psi_2=\omega\circ y^{-1}$.}
	\label{fig: tandem}
\end{figure}

Consider the problem under Assumptions~\ref{assumption: H1}--\ref{assumption: H3} with the additional hypothesis $p_{i,j}=0$ if $\vert i\vert \geq 2$ or $\vert j\vert \geq 2$. Notice that the problem of any model $\{p_{k,\ell}\}_{k,\ell}$ with $p_{0,0}\not=0$ is equivalent to the problem of the model $\{p'_{k,\ell}\}_{k,\ell}$ with $p'_{0,0}=0$ and $p'_{k,\ell} = p_{k,\ell}/(1-p_{0,0})$ for all $k,\ell$. Then without loss of generality, we assume further $p_{0,0}=0$. We can write the kernel under the form
\begin{align*}
	K(x,y) = a(x)y^2 + b(x)y + c(x),
\end{align*}
where
\begin{align*}
	& a(x) = -(p_{-1,-1}x^2 + p_{0,-1}x + p_{1,-1}),\\
	& b(x) = -(p_{-1,0}x^2 - x +p_{1,0}),\\
	& c(x) = -(p_{-1,1}x^2 + p_{0,1}x + p_{1,1}).
\end{align*}

The corresponding discriminant $d(x) = b(x)^2-4a(x)c(x)$ is a polynomial of order $3$ or $4$, and in either case, $d(x)$ always has one root $x_1\in[-1,1)$, a double root $1$. If $d(x)$ has order $4$, then the remaining root, denoted as $x_4$, is in $ (1,\infty)\cup(-\infty,-1]$. If $d(x)$ has order $3$, we denote conventionally $x_4=\infty$. We also obtain $y_1\in [-1,1)$ and $y_4\in (-\infty,-1]\cup(1,\infty]$ similarly.

Put
\begin{align*}
	&s_0 =\frac{2-(x_1+x_4)+2\sqrt{(1-x_1)(1-x_4)}}{x_4-x_1},\\
	&s_1 = \frac{x_1+x_4-2x_1x_4+2\sqrt{x_1x_4(1-x_1)(1-x_4)}}{x_4-x_1},\\
	&s_2 =\frac{2-(y_1+y_4)+2\sqrt{(1-y_1)(1-y_4)}}{y_4-y_1},\\
	&s_3 = \frac{y_1+y_4-2y_1y_4+2\sqrt{y_1y_4(1-y_1)(1-y_4)}}{y_4-y_1},\\	
	&\rho = e^{-i\theta},
\end{align*}
where $\theta:=(\theta_1+\theta_2)/2$ and $\theta_1$, $\theta_2$ are defined in \eqref{eq: theta_1 theta_2}.

We recall an important result about the rational uniformization. We refer to \cite[Sec.~2.3]{FaRa-11} for the proof.
\begin{lemma}
	\label{lem: unif}
	Assuming \ref{assumption: H1}--\ref{assumption: H3} and that $p_{i,j}=0$ if $\vert i\vert\geq 2$ or $\vert j\vert \geq 2$, one has
	\begin{equation*}
	\{(x,y)\in(\mathbb C\cup\{\infty\})^2: K(x,y)=0\}=\{(x(s),y(s)) : s\in\mathbb C\cup\{\infty\}\},
	\end{equation*}
	where
	\begin{equation*}
	x(s) = \frac{(s-s_1)(s-\frac{1}{s_1})}{(s-s_0)(s-\frac{1}{s_0})} \quad \text{and} \quad
	y(s) = \frac{(\rho s-s_3)(\rho s-\frac{1}{s_3})}{(\rho s-s_2)(\rho s-\frac{1}{s_2})}.
	\end{equation*}
	Moreover, the above rational functions admit the involutions $x(s)=x(1/s)$ and $y(s)=y(1/(\rho^2s))$.
\end{lemma}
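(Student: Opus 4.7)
The strategy is to show that the affine curve $V:=\{(x,y)\in(\mathbb{C}\cup\{\infty\})^2:K(x,y)=0\}$ has geometric genus zero, and hence admits a rational parametrisation by $\mathbb{CP}^1$, and then to pin down the precise form of this parametrisation using the two Galois involutions associated with the projections $(x,y)\mapsto x$ and $(x,y)\mapsto y$.

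For the genus, view $K$ as quadratic in $y$: the projection $(x,y)\mapsto x$ realises (the normalisation of) $V$ as a degree-$2$ cover of $\mathbb{CP}^1$, branched at the roots of $d(x)=b(x)^2-4a(x)c(x)$. A direct computation based on $K(1,1)=0$ and the zero-drift identity $\sum k p_{k,\ell}=\sum\ell p_{k,\ell}=0$ gives $d(1)=d'(1)=0$, so $x=1$ is at least a double root of $d$; this corresponds to a node of $V$ at $(1,1)$ and is therefore not a branch point of the normalised cover. The only remaining ramification values are the simple roots $x_1$ and $x_4$ of $d$, and Riemann--Hurwitz yields $2g-2=-4+2=-2$, whence $g=0$. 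Thus $V$ is rational; its normalisation is $\mathbb{CP}^1$, parametrised by some parameter $s$, and both $x$ and $y$ become degree-$2$ rational functions of $s$.

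To pin down $x(s)$, I would fix the uniformising parameter $s$ by three normalisations so that the Galois involution of the $x$-cover (swapping the two $y$-roots above a given $x$) becomes $\sigma_x:s\mapsto 1/s$, whose fixed points are $\pm 1$. Then $x(s)$ is $\sigma_x$-invariant of degree $2$ with simple zeros and poles, which forces
\[ x(s)=\frac{(s-s_1)(s-1/s_1)}{(s-s_0)(s-1/s_0)}, \]
where $\{s_0,1/s_0\}$ and $\{s_1,1/s_1\}$ are the $\sigma_x$-paired preimages of $\infty$ and $0$. The branch values of this map are $x(\pm1)$, which by construction must equal $\{x_1,x_4\}$; solving the two resulting quadratic equations for $s_0,s_1$ (and selecting the roots outside the unit disk, as dictated by the standard normalisation) produces the stated formulas. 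The identity $x(s)=x(1/s)$ is then tautological from the built-in symmetry of numerator and denominator. For $y(s)$ the same scheme applies, with one twist: in the parameter $s$ already fixed by the $x$-normalisation, the Galois involution $\sigma_y$ of the $y$-cover is \emph{not} $s\mapsto 1/s$ but some other involution of $\mathbb{CP}^1$. A rescaling $t=\rho s$ conjugates $\sigma_y$ into $t\mapsto 1/t$, which yields the claimed formula for $y(s)$ in the variable $\rho s$ and the identity $y(s)=y(1/(\rho^2 s))$; the roles of $s_2,s_3$ for $y$ are the exact analogues of $s_0,s_1$ for $x$.

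\textbf{Main obstacle.} The delicate step is to identify $\rho$ precisely as $e^{-i\theta}$ with $\theta=(\theta_1+\theta_2)/2$. The composition $\sigma_y\circ\sigma_x$ is a holomorphic automorphism of the normalisation that, in the chosen parameter, acts as the rotation $s\mapsto\rho^2 s$, so $\rho^2$ is intrinsic and must be computable from the local geometry of $V$ at the node $(1,1)$. Concretely, $\rho^2$ equals the ratio of the two tangent directions to $V$ at this singular point, and these directions are precisely the two roots of the quadratic form $AX^2+2BX+C$ appearing in \eqref{eq: eqn of eta'(1)}; matching this with the angular formulas \eqref{eq: theta_1 theta_2} gives $\rho^2=e^{-2i\theta}$. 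Once $\rho$ is pinned down, the only residual work is a sign/branch check in choosing the correct square roots in the formulas for $s_0,s_1,s_2,s_3$, driven by the normalisations $|s_0|,|s_2|\geq 1$ that make $x(s)$ and $y(s)$ send the unit $s$-disk onto the ``physical'' domains.
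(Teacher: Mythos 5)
The paper does not prove this lemma itself---it defers entirely to \cite[Sec.~2.3]{FaRa-11}---and your argument (genus zero via the double root of $d$ at $x=1$ forced by the zero drift, Riemann--Hurwitz on the degree-$2$ cover, then normalising the two Galois involutions to $s\mapsto 1/s$ and $s\mapsto 1/(\rho^2 s)$ and reading off $\rho^2$ from the tangent directions at the node $(1,1)$) is precisely the standard derivation carried out in that reference. So this is essentially the same approach as the paper's source; the only parts left as sketches (checking that $(1,1)$ is an ordinary node, i.e.\ that the tangent cone is non-degenerate, and the final matching $\rho=e^{-i\theta}$ with $\theta=(\theta_1+\theta_2)/2$) are routine computations that go through exactly as you indicate.
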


Put $\mathcal{P}:= x^{-1}(\mathcal{S}_1)\cap y^{-1}(\mathcal{S}_2)$. 

\begin{lemma}
	Under Assumptions \ref{assumption: H1}--\ref{assumption: H3} and \ref{assumption: monotonic}--\ref{assumption: alpha'(z)}, $\mathcal{P}$ is a non-self-intersecting curve with two endpoints $0$ and infinity, lying in the cone $\mathcal{E}:=\{re^{i\phi}:r\geq 0,\phi\in [0,\theta]\}$. Consequently, $\mathcal{P}$ divides $\mathcal{E}$ into two domains, denoted as $\mathcal{P}^+$ and $\mathcal{P}^-$ (see Figure~\ref{fig: tandem}), such that:
	\begin{enumerate}[label=\textnormal{(\roman{*})},ref=\textnormal{(\roman{*})}]
		\item $x(s)$ maps conformally $\mathcal{P}^+$ onto $\mathcal{S}_1^+\setminus[x_1,1]$;
		\item $y(s)$ maps conformally $\mathcal{P}^-$ onto $\mathcal{S}_2^+\setminus[y_1,1]$.
	\end{enumerate}
\end{lemma}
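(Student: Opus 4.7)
I plan to use the rational uniformization from Lemma~\ref{lem: unif} together with the framework developed in \cite[Sec.~6.5]{FaIaMa-17}. The starting observation is that $x(0)=x(\infty)=1$ and $y(0)=y(\infty)=1$, so $\{0,\infty\}\subset\mathcal{P}$. Moreover, the uniformization $s\mapsto(x(s),y(s))$ is a bijection from $(\mathbb{C}\cup\{\infty\})\setminus\{0,\infty\}$ onto the algebraic curve $\{K=0\}\setminus\{(1,1)\}$: if two points $s\neq s'$ had the same image, the two involutions would force $s'=1/s=1/(\rho^2 s)$, hence $\rho^2=1$, which is excluded by Assumption~\ref{assumption: theta_1,theta_2 not=0}. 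Since $\mathcal{K}$ is a Jordan curve passing through the corner point $(1,1)$ (Lemma~\ref{lem: S1-S2-smoothness-angle-at-1}), its preimage in the $s$-sphere is thus a Jordan arc $\widetilde{\mathcal{P}}$ whose two ends accumulate at $0$ and $\infty$, and I would set $\mathcal{P}:=\widetilde{\mathcal{P}}\cup\{0,\infty\}$.

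To locate $\mathcal{P}$ inside $\mathcal{E}$, I would first check that the boundary rays of the cone parametrize the branch cuts $[x_1,1]$ and $[y_1,1]$. The ray $\mathbb{R}_+$ is setwise fixed by the involution $s\mapsto 1/s$ of $x$ and contains the fixed point $s=1$; since $x$ is real-analytic there and takes the value $1$ at both $s=0$ and $s=\infty$, the image $x(\mathbb{R}_+)$ is a real interval with endpoints $1$ and $x(1)$, and a direct computation identifies $x(1)=x_1$, giving $x(\mathbb{R}_+)=[x_1,1]$. Symmetrically, the ray $\mathbb{R}_+e^{i\theta}$ is fixed by $s\mapsto 1/(\rho^2 s)$ with fixed point $s=e^{i\theta}$, which yields $y(\mathbb{R}_+e^{i\theta})=[y_1,1]$. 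Since $\mathcal{K}\setminus\{(1,1)\}$ lies outside these cuts, $\widetilde{\mathcal{P}}$ cannot meet $\partial\mathcal{E}$; a continuity argument starting from the tangential direction of $\widetilde{\mathcal{P}}$ near $s=0$, combined with Assumption~\ref{assumption: monotonic}, then confines $\widetilde{\mathcal{P}}$ to the interior of $\mathcal{E}$.

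The equality $\mathcal{P}=x^{-1}(\mathcal{S}_1)\cap y^{-1}(\mathcal{S}_2)$ follows from the decompositions $x^{-1}(\mathcal{S}_1)=\widetilde{\mathcal{P}}\cup(1/\widetilde{\mathcal{P}})$ and $y^{-1}(\mathcal{S}_2)=\widetilde{\mathcal{P}}\cup(\bar{\rho}^2/\widetilde{\mathcal{P}})$, coming from the two involutions. An intersection point outside $\widetilde{\mathcal{P}}$ would satisfy $1/s=\bar{\rho}^2/s'$ with $s,s'\in\widetilde{\mathcal{P}}$, that is $s'=e^{-2i\theta}s$; but the rotated cone $e^{-2i\theta}\mathcal{E}$ meets $\mathcal{E}$ only at $\{0,\infty\}$, so no such pair exists.

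Finally, for the conformal claim, $\widetilde{\mathcal{P}}$ divides $\mathcal{E}$ into the component $\mathcal{P}^+$ adjacent to $\mathbb{R}_+$ and the component $\mathcal{P}^-$ adjacent to $\mathbb{R}_+e^{i\theta}$. Since $x$ is a degree-$2$ rational map with involution $s\mapsto 1/s$, any fundamental domain of this involution is mapped conformally onto $\mathbb{C}\cup\{\infty\}$ minus a cut joining the two critical values; choosing this fundamental domain to be $\mathcal{P}^+$ together with its reflection through $\mathbb{R}_+$, one finds that $x|_{\mathcal{P}^+}$ is a conformal bijection onto $\mathcal{S}_1^+\setminus[x_1,1]$, since the boundary component $\widetilde{\mathcal{P}}$ of $\mathcal{P}^+$ is mapped onto $\mathcal{S}_1$ and the remaining boundary along $\mathbb{R}_+$ is mapped onto the cut. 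The symmetric argument for $y|_{\mathcal{P}^-}$ uses the involution $s\mapsto 1/(\rho^2 s)$ and the boundary ray $\mathbb{R}_+e^{i\theta}$. The main obstacle here is the global confinement of $\widetilde{\mathcal{P}}$ inside $\mathcal{E}$, which is where Assumption~\ref{assumption: monotonic} plays a crucial role by preventing $\widetilde{\mathcal{P}}$ from making spurious excursions across the boundary rays before returning.
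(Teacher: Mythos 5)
Your overall strategy---pull $\mathcal{K}$ back through the rational uniformization of Lemma~\ref{lem: unif}, identify the boundary rays of $\mathcal{E}$ with the branch cuts $[x_1,1]$ and $[y_1,1]$ via the two involutions, and then sort the preimages into the three cones $e^{-i\theta}\mathcal{E}$, $\mathcal{E}$, $e^{i\theta}\mathcal{E}$---is essentially the paper's. But there is a genuine gap at the one step that carries all the content: you assert that ``$\mathcal{K}\setminus\{(1,1)\}$ lies outside these cuts'', i.e.\ that $\mathcal{S}_1\cap(x_1,1)=\emptyset$ and $\mathcal{S}_2\cap(y_1,1)=\emptyset$ (and, for your final identification of $x(\mathcal{P}^+)$ with $\mathcal{S}_1^+\setminus[x_1,1]$, the stronger statement that the open cuts lie in $\mathcal{S}_1^+$ and $\mathcal{S}_2^+$), with no argument at all. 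Everything downstream---the confinement of $\widetilde{\mathcal{P}}$ to $\mathcal{E}$ and the conformal claims (i)--(ii)---rests on this. The paper proves it in two steps: first, if $\mathcal{S}_1$ met $(x_1,1)$ at some $x(s')$ with $s'\in(0,\infty)\setminus\{1\}$, the second coordinate of the corresponding point of $\mathcal{K}$ would be $y(s')$, which would then have to be real, impossible because $\rho s'+1/(\rho s')\notin\mathbb{R}$ for such $s'$; second, Proposition~\ref{prop: extend-solutions-intro}\ref{item: Prop1-item2} is invoked to show that the Jordan curve $\mathcal{J}_1=x(e^{i\theta}\mathbb{R}_+)$ cannot enter $\mathcal{S}_1^+$ (a point of $\mathcal{J}_1\cap\mathcal{S}_1^+$ would produce a root of $K$ in $\mathcal{S}_1^+\times\mathcal{S}_2^+$), whence $\mathcal{S}_1\subset x(\mathcal{E})=\mathcal{J}_1^+\setminus[x_1,1]$. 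You need versions of both; in particular the second step is where global information about the kernel's zero set in the bidisk enters, and it cannot be replaced by the purely local ``tangential direction of $\widetilde{\mathcal{P}}$ near $s=0$'' continuity argument you sketch, which only controls the germ of the curve at one end.

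Two smaller points. Your injectivity argument for $s\mapsto(x(s),y(s))$ off $\{0,\infty\}$ needs $\rho^2\neq 1$, i.e.\ $\theta\neq\pi$; this is not literally what Assumption~\ref{assumption: theta_1,theta_2 not=0} says ($\theta_1,\theta_2\neq 0$ does not by itself exclude $\theta_1=\theta_2=\pi$), so you should justify it, e.g.\ from the strict Cauchy--Schwarz inequality $AC>B^2$ for irreducible two-dimensional walks. And your claim that $e^{2i\theta}\widetilde{\mathcal{P}}$ cannot meet $\widetilde{\mathcal{P}}$ because the rotated cone meets $\mathcal{E}$ only at $\{0,\infty\}$ silently assumes $\theta<2\pi/3$ (for $\theta$ close to $\pi$ the cones overlap); the paper's own wording has the same implicit restriction, so I only flag it, but a clean write-up should address it.
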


\begin{proof}
	We first rewrite $x(s)$ and $y(s)$ as
	\begin{equation*}
		x(s) = 1+\frac{\left(s_0+\frac{1}{s_0}\right)- \left(s_1+\frac{1}{s_1}\right)}{\left(s+\frac{1}{s}\right)-\left(s_0+\frac{1}{s_0}\right)}\quad\text{and}\quad y(s) = 1+\frac{\left(s_2+\frac{1}{s_2}\right)- \left(s_3+\frac{1}{s_3}\right)}{\left(\rho s+\frac{1}{\rho s}\right)-\left(s_2+\frac{1}{s_2}\right)}.
	\end{equation*}
	It is worth mentioning that $\{s_i+1/s_i\}_{0\leq i\leq 3}$ are real, and $x(s)$ and $y(s)$ are composed by M\"obius transformations (which have the form $(az+b)/(cz+d)$ for $a,b,c,d\in\mathbb{C}$) and Joukowsky transformation (which has the form $z+1/z$).
	
	Let us specify the image of $\mathcal{E}$ under the maps $x(s)$ and $y(s)$. It is easily seen that
	\begin{equation}
		s_0+\frac{1}{s_0} = \frac{4-2(x_1+x_4)}{x_4-x_1}<2\leq s+\frac{1}{s}
	\end{equation}
	for all $s\in[0,\infty]$, and $x'(s)$ does not change the sign on $(0,1)$ and $(1,\infty)$. Moreover, $x(0)=x(\infty)=1$ and $x(1)=x_0$. Hence, $x(s)$ is a one-to-one mapping from $[0,1]$ onto $[x_1,1]$, and from $[1,\infty]$ onto $[x_1,1]$. Moreover, since $x'(s)$ never vanishes on $e^{i\theta}\mathbb{R}_+$, then $x(s)$ is a one-to-one mapping from $e^{i\theta}\mathbb{R}_+$ onto a Jordan curve $\mathcal{J}_1$ passing through $1$, since $x(0)=x(\infty)=1$. Similarly, $y(s)$ is a one-to-one mapping from $e^{i\theta}[0,1]$ and $e^{i\theta}[1,\infty]$ onto $[y_1,1]$, and $\mathbb{R}_+$ onto a Jordan curve $\mathcal{J}_2$ passing through $1$. This implies that $x(s)$ (resp.~$y(s)$) maps conformally $\mathcal{E}$ onto $\mathcal{J}_1^+\setminus[x_1,1]$ (resp.~$\mathcal{J}_2^+\setminus[y_1,1]$).
	
	We now prove that $\mathcal{S}_1\subset \mathcal{J}_1^+\setminus[x_1,1]$ and $\mathcal{S}_2\subset \mathcal{J}_2^+\setminus[y_1,1]$. We first show that $(x_1,1)\subset \mathcal{S}_1^+$ . Reasoning by contradiction, if $\mathcal{S}_1$ cuts $(x_1,1)$ at any point, then there exists $s'\in(0,\infty)\setminus\{1\}$ such that $x(s')$ is the intersection of $\mathcal{S}_1$ and $(x_1,1)$, and $y(s')$ is the intersection of $\mathcal{S}_2$ and $\mathbb{R}$. In other words, $y(s')$ is real, which cannot not hold, since $\left(\rho s+\frac{1}{\rho s}\right)$ is not real for any $s\in(0,\infty)\setminus\{1\}$. Hence, $(x_1,1)\subset \mathcal{S}_1^+$.
	
	Similarly, $(y_1,1)\subset\mathcal{S}_2^+$. By Prop.~\ref{prop: extend-solutions-intro}\ref{item: Prop1-item2}, we then know that $\mathcal{J}_1$ and $\mathcal{J}_2$ are respectively outside of $\mathcal{S}_1^+$ and $\mathcal{S}_2^+$. Hence,  $\mathcal{S}_1\subset \mathcal{J}_1^+\setminus[x_1,1]$ and $\mathcal{S}_2\subset \mathcal{J}_2^+\setminus[y_1,1]$.
	
	Now consider the set $x^{-1}(\mathcal{S}_1)$. By the position of $\mathcal{S}_1$ and the involution of $x(s)$ in Lemma~\ref{lem: unif}, we know that $x^{-1}(\mathcal{S}_1)$ (resp.~ $y^{-1}(\mathcal{S}_2)$) is a union of two curves, both of which have two endpoints $0$ and infinity, but one is in $\mathcal{E}$ and the other one is in $e^{-i\theta}\mathcal{E}$ (resp.~$e^{i\theta}\mathcal{E}$). This implies that $\mathcal{P}\subset\mathcal{E}$. The other statement in the lemma also follows.
\end{proof}

Now put
\begin{equation*}
	\omega(z):= z^{\pi/\theta} + z^{-\pi/\theta}.
\end{equation*}
It is seen that $\omega(z)$ is the composition of $z\mapsto z+1/z$ and $z\mapsto z^{\pi/\theta}$. Recall $z\mapsto z^{\pi/\theta}$ maps the cone $\mathcal{E}$ onto the upper half plane $\mathcal{H}^+$. The mapping $z\mapsto z+1/z$ is the Joukowsky transform, which maps conformally the upper unit disk onto the upper half plane $\mathcal{H}^+$, with the following one-to-one correspondence on the boundary:
\begin{itemize}
	\item $[-1,1]$ corresponds to $[-\infty,-2]\cup[2,\infty]$;
	\item The upper semicircle $\{e^{i\phi}:\phi\in[0,\pi]\}$ corresponds to $[-2,2]$.
\end{itemize}
$z\mapsto z+1/z$ also maps conformally the set $\{re^{i\phi}:r>1,\phi\in(0,\pi)\}$ onto the lower half plane $\mathcal{H}^-$, with the following one-to-one correspondence on the boundary:
\begin{itemize}
	\item $[-\infty,-1]\cup[1,\infty]$ corresponds to $[-\infty,-2]\cup[2,\infty]$;
	\item The upper semicircle $\{e^{i\phi}:\phi\in[0,\pi]\}$ corresponds to $[-2,2]$.
\end{itemize}
Hence, $z\mapsto z+1/z$ maps conformally the upper half plane onto the plan cut $\mathbb{C}\setminus((-\infty,-2]\cup[2,\infty))$. Thus, $\omega(z)$ maps conformally $\mathcal{E}$ onto $\mathbb{C}\setminus((-\infty,-2]\cup[2,\infty))$.

Let $\mathcal{L}$ denote $\omega(\mathcal{P})$, which is an infinite curve, and let $\mathcal{L}^+$, $\mathcal{L}^-$ respectively denote $\omega(\mathcal{P}^+)$, $\omega(\mathcal{P}^-)$. One can verify that $\omega\circ x^{-1}$ (resp.~$\omega\circ y^{-1}$) maps conformally $\mathcal{S}_1^+$ (resp.~$\mathcal{S}_2$) onto $\mathcal{L}^+$ (resp.~$\mathcal{L}^-$). Now we put
\begin{align*}
	& \psi_1(z) = \omega\circ x^{-1}(z)=2T_{\pi/\theta}\left(\frac{(s_0+1/s_0)z-(s_1+1/s_1)}{2(z-1)}\right),\\
	& \psi_2(z) = \omega\circ y^{-1}(z)=-2T_{\pi/\theta}\left(\frac{(s_2+1/s_2)z-(s_3+1/s_3)}{2(z-1)}\right),
\end{align*}
where $T_n(z)$ is a generalization of  Chebyshev polynomial of the first kind to non-integer order $n$, and is defined by:
\begin{equation*}
	T_{\pi/\theta}(z) = \frac{1}{2}\left(  (z+\sqrt{z^2-1})^{\pi/\theta} + (z-\sqrt{z^2-1})^{\pi/\theta}\right),\quad z\in\mathbb{C}\setminus(-\infty,-1),
\end{equation*}
(see Figure~\ref{fig: tandem}). It can be verified that the mappings $\psi_1$ and $\psi_2$ satisfy Lemma~\ref{lem: psi1-psi2} and thus can be chosen to construct harmonic functions in Theorem~\ref{thm: main_intro-1}.

\subsection*{Weighted simple random walk}
We now give a concrete example where all the conformal mappings in Section~\ref{sec: conformal welding} can be explicitly expressed.

Consider the random walk with the transition probabilities:
\begin{equation*}
	p_{0,1}=p_{0,-1}=\frac{3}{8},\,
	p_{1,0}=p_{-1,0}=\frac{1}{8}.
\end{equation*}
The kernel then takes the form:
\begin{equation*}
K(x,y) = xy - \frac{1}{8}\big( 3x + y + 3xy^2 + x^2y\big).
\end{equation*}
By solving the equation $K(\eta s,\eta s^{-1})=0$, one obtains:
\begin{align*}
	&\mathcal{S}_1 = \big\{ \frac{4s + {i}\sqrt{3}(s^2-1)}{s^2+3}s : s={e}^{{i}t},t\in[0,\pi) \big\},\\
	&\mathcal{S}_2 = \big\{ \frac{4s + {i}\sqrt{3}(s^2-1)}{s^2+3}s^{-1} : s={e}^{{i}t},t\in[0,\pi) \big\}.
\end{align*}

Using the uniformization constructed above, we have:
\begin{equation*}
	x(s) = \frac{(s-e^{i\pi/6})(s-e^{-i\pi/6})}{(s-e^{i5\pi/6})(s-e^{-i5\pi/6})}
	\quad\text{and}\quad y(s) = \frac{(e^{-i\pi/3}s-e^{i\pi/3})(e^{-i\pi/3}s-e^{-i\pi/3})}{(e^{-i\pi/3}s-e^{i2\pi/3})(e^{-i\pi/3}s-e^{-i2\pi/3})}.
\end{equation*}
In particular, $x(s)$ maps conformally the cone $\{re^{i\phi}:r>0,\phi\in (0,\pi/3)\}$ onto $\mathcal{S}_1^+\setminus [7-4\sqrt{3},1]$, and $y(s)$ maps conformally the cone $\{re^{i\phi}:r>0,\phi\in (\pi/3,\pi/2)\}$ onto $\mathcal{S}_2^+\setminus [1/3,1]$. We consider some conformal mappings between half-planes, disks and cones:
\begin{align*}
	&\mu:\{a+bi:a>0\}\to\mathcal{C}^+,\quad z\mapsto \frac{z-1}{z+1},\\
	&\pi_1:\{re^{i\phi}:r>0,\phi\in (0,\pi/3)\}\to  \{a+bi:a>0\},\quad z\mapsto z^{3/2}+z^{-3/2},\\
	&\pi_2:\{re^{i\phi}:r>0,\phi\in (\pi/3,\pi/2)\}\to  \{a+bi:a>0\},\quad z\mapsto  - (ze^{-i\pi/6})^3 - (ze^{-i\pi/6})^{-3}.
\end{align*}
Now put
\begin{align*}
	&\pi_1(z) =  \mu\circ\omega_1\circ x^{-1}(z)= \frac{2T_{3/2}(-\frac{\sqrt{3}}{2}\frac{z+1}{z-1})-1}{2T_{3/2}(-\frac{\sqrt{3}}{2}\frac{z+1}{z-1})+1},\\
	&\pi_2(z) = \mu\circ\omega_2\circ y^{-1}(z)= \frac{2T_{3}(-\frac{1}{2}\frac{z+1}{z-1})-1}{2T_{3}(-\frac{1}{2}\frac{z+1}{z-1})+1}.
\end{align*}
It can be verified that $\pi_1$ and $\pi_2$ are conformal mappings respectively from $\mathcal{S}_1^+$ and $\mathcal{S}_2^+$ onto $\mathcal{C}^+$, and satisfy Lemma~\ref{lem: pi_1}. Accordingly, the shift function $\alpha:\mathbb{R}\to\mathbb{R}$ in \eqref{eq: alpha(z)} has the form
\begin{equation*}
	\alpha(z) = \sign(z)\sqrt{\sqrt{z^2+4}-2},
\end{equation*}
and $\chi_1$ and $\chi_2$ in Lemma~\ref{lem: chi1-chi2-Jordan curve} admit the expressions
\begin{equation*}
	\chi_1(z) = \frac{2T_{4/3}(-iz/2)-1}{2T_{4/3}(-iz/2)+1}
	\quad\text{and}\quad
	\chi_2(z) = \frac{-2T_{2/3}(iz/2)-1}{-2T_{2/3}(iz/2)+1}.
\end{equation*}
The conformal mappings $\psi_1$ and $\psi_2$ in Lemma~\ref{lem: psi1-psi2} then follow:
\begin{equation*}
	\psi_1(z) = 2T_2(-\frac{\sqrt{3}}{2}\frac{z+1}{z-1})=\frac{z^2+10z+1}{(z-1)^2}
	\quad\text{and}\quad
	\psi_2(z) = -2T_2(-\frac{1}{2}\frac{z+1}{z-1})=\frac{z^2-6z+1}{(z-1)^2}.
\end{equation*}
With the family of polynomials $\{P_n(x)\}_{n\geq 1} = \{(x-1)^n\}_{n\geq 1}$, the harmonic functions $h_n(i,j)$ and theirs generating functions $H_n(x,y)$ in Theorem~\ref{thm: main_intro-1} can be explicitly expressed, for example, we have:
\begin{align*}
	&H_1(x,y) = \frac{-32}{(x-1)^2(y-1)^2}=-32\sum_{i,j\geq 1} ij x^{i-1}y^{j-1},\\
	&H_2(x,y) = \frac{128(x^2y-3xy^2+4xy-3x+y)}{(x-1)^4(y-1)^4}=128\sum_{i,j\geq 1}\frac{ij(-3i^2+2j^2+2)}{6}  x^{i-1}y^{j-1},\\
	& ...
\end{align*}
We finally remark that $-h_1(i,j)/32=ij$ is the unique positive harmonic function (up to multiplicative factors) for this example.

\bibliographystyle{plain}

\end{document}